\numberwithin{equation}{section}
 \newcommand{\set}[1]{\left\{#1\right\}}
\newcommand{\bigabs}[1]{\bigl| #1 \bigr|}
\newcommand{\Bigabs}[1]{\Bigl| #1 \Bigr|}
\newcommand{\biggabs}[1]{\biggl| #1 \biggr|}
\newcommand{\Biggabs}[1]{\Biggl| #1 \Biggr|}
\newcommand{\floor}[1]{\left\lfloor #1 \right\rfloor}
\newcommand{\bigbrac}[1]{\bigl( #1 \bigr)}
\newcommand{\Bigbrac}[1]{\Bigl( #1 \Bigr)}
\newcommand{\biggbrac}[1]{\biggl( #1 \biggr)}
\newcommand{\norm}[1]{\left\| #1\right\|}
\newcommand{\pq}{\omega_{(Q_0,Q]}}
\newcommand{\rd}{\,\mathrm{d}}
\newcommand{\po}{\mathcal P_1}
\newcommand{\pt}{\mathcal P_2}
\newcommand{\twosum}[2]{ \sum_{\substack{#1\\ #2}}}
\newcommand{\N}{\mathbb{N}}
\newcommand{\R}{\mathbb{R}}
\newcommand{\C}{\mathbb{C}}
\newcommand{\T}{\mathbb{T}}
\newcommand{\eps}{\varepsilon}
\let\@@pmod\pmod
\DeclareRobustCommand{\pmod}{\@ifstar\@pmods\@@pmod}
\def\@pmods#1{\mkern4mu({\operator@font mod}\mkern 6mu#1)}
\renewcommand{\mod}[1]{\,(\mathrm{mod}\,#1)}
\renewcommand{\leq}{\leqslant}
\renewcommand{\geq}{\geqslant}
\renewcommand{\epsilon}{\varepsilon}
\newtheorem{theorem}{Theorem}[section]
\newtheorem{corollary}[theorem]{Corollary}
\newtheorem{proposition}[theorem]{Proposition}
\newtheorem{lemma}[theorem]{Lemma}
\theoremstyle{definition}
\newtheorem{definition}[theorem]{Definition}
\newtheorem*{remark}{Remark}
\numberwithin{theorem}{section}
\begin{document}

\title[Local Fourier uniformity of divisor function on average]{Local Fourier uniformity of higher divisor functions on average}

\author{Mengdi Wang}

\address{Department of Mathematics and Statistics, University of Turku, Turku 20014, Finland}
\email{mengdi.wang@utu.fi}

\subjclass[2020]{11L07, 11N05, 05C38}

\maketitle

\begin{abstract}
Let $\tau_k$ be the $k$-fold divisor function. By constructing an approximant of $\tau_k$, denoted as $\tau_k^*$, which is a normalized truncation of the $k$-fold divisor function, we prove that when $\exp\left(C\log^{1/2}X(\log\log X)^{1/2}\right)\leq H\leq X$ and $C>0$ is sufficiently large,  the following estimate holds for almost all $x\in[X,2X]$:
\[
\Big|\sum_{x<n\leq x+H}(\tau_k(n)-\tau_k^*(n)) e(\alpha_dn^d+\cdots+\alpha_1n)\Big|=o(H\log^{k-1}X),
\]
where $\alpha_1, \dots, \alpha_d\in \mathbb{R}$ are arbitrary frequencies.

\end{abstract}

\section{Introduction}

Denote $\tau_k(n)$ as the $k$-fold divisor function, signifying the number of representations of $n$ as the product $n=n_1\cdots n_k$ of $k$ natural numbers, where $k \geq 2$ is a fixed integer. The Dirichlet hyperbola method leads to the following asymptotic expression 
\[
\sum_{X<n\leq 2 X} \tau_k(n)=XP_k(\log X) +O(X^{1-1/k+\eps}),
\]
where $P_k$ is a polynomial of degree $k-1$ and $\eps>0$ is arbitrarily small. We expect  a similar mean value result for $\tau_k$ within a relatively short interval $[X, X+H]$. For instance, according to \cite[Theorem 1.3]{MT}, when $X^{0.55+\eps}\leq H\leq X$, one has
\[
\sum_{X<n\leq X+H}\tau_k(n)=HP_k(\log X)+O(H\log^{(2/3+\eps)k-1}X).
\]
Meanwhile, in a recent paper \cite{Sun},  it is demonstrated that, assuming $\log^{k\log k-k+1+\eps}X\leq H\leq X$, for almost all integers $x\in(X,2X]$, the following holds
\[
\sum_{x<n\leq x+H}\tau_k(n)=C_kH\log^{k-1}X+o(H\log^{k-1}X),
\] 
where $C_k>0$ is a constant.
 Building on these observations, we say a 1-bounded function $g:\N\to\C$  is \textit{orthogonal} to  $\tau_k$ in the interval $[X,X+H]$ if
\[
\sum_{X<n\leq X+H}\tau_k(n) g(n)=o(H\log^{k-1}X);
\]
otherwise, we say $\tau_k$ and $g$ are \textit{correlated}  in the interval $[X,X+H]$.

Assume $e(n\alpha)$ denotes a major-arc phase. It is known that $\tau_k$ and $e(\cdot\alpha)$ are correlated in $[X,2X]$. To address this and establish the orthogonality of the divisor function with all phases, we can either transfer our focus to consider the $W$-tricked divisor functions, as demonstrated in \cite[Theorem 9.1]{Mat} (for $k=2$); or, as in \cite{MRSTT,MSTT}, subtract off a suitable approximant from $\tau_k$.

In the series of papers \cite{MRSTT,MSTT}, Matom{\"a}ki, Radziwi{\l}{\l},  Shao,  Tao and Ter{\"a}v{\"a}inen construct an approximant $\tau_k^\#$ (defined in \cite[(1.2)]{MSTT}) and show that $\tau_k-\tau_k^\#$ is orthogonal to degree $d$ nilsequences in all short intervals and almost all very short intervals. If we focus solely on polynomial phase functions, \cite[Theorem 1.1]{MSTT} demonstrates that when $X^{3/5+\eps}\leq H\leq X$, we have
\[
\sup_{\alpha_1,...,\alpha_d\in\T}\Bigabs{\sum_{X<n\leq X+H}(\tau_k(n)-\tau_k^\#(n)) e(\alpha_dn^d+\cdots+\alpha_1n)}\ll H\log^{3k/4-1}X,
\]
and when $k=2$ the above $H$ can be as small as $X^{1/3+\eps}$; and \cite[Theorem 1.1]{MRSTT} shows that if $X^\eps\leq H\leq X$ then  
\begin{align}\label{mrstt}
\sup_{\alpha_1,...,\alpha_d\in\T}\Bigabs{\sum_{x<n\leq x+H}(\tau_k(n)-\tau_k^\#(n)) e(\alpha_dn^d+\cdots+\alpha_1n)}=o(H\log^{k-1}X)	
\end{align}
holds for  all but $O(X\log^{-A}X)$ integers $x\in[X,2X]$.

 Suppose that $n\in[X,2X]$ and $0<\gamma<\frac{1}{5k}$  is a constant. Let's define
\[
\tau_k^*(n)=\gamma^{1-k}\twosum{m|n}{m\leq X^\gamma}\tau_{k-1}(m).
\]
The approximant\footnote{We would refer the readers to \cite[Lemma 6.1]{MRT-divisor} and \cite[Section 4]{Mat} for the construction of a majorant rather than approximant, and also refer to \cite[Section 3.1]{MSTT} for the discussion of distinct approximants.} $\tau_k^*$ can be viewed as a normalized truncation of $\tau_k$, and this construction is inspired by the minorant in \cite{SA} but with slight modifications.  Our main result is as follows.

\begin{theorem}\label{main}
	Suppose that $0<\eta<1$ is a parameter and $X\geq 1$ is sufficiently large, there is a large constant $C=C(\eta)>0$ such that when $\exp\bigbrac{C\log^{1/2}X(\log\log X)^{1/2}}\leq H\leq X$ we have
	\[
	\int_X^{2X}\sup_{\alpha_1,\dots,\alpha_d\in\T}\Bigabs{\sum_{x\leq n\leq x+H}\bigbrac{\tau_k(n)-\tau_k^*(n)}e(\alpha_dn^d+\cdots+\alpha_1n)}\,\rd x\leq \eta XH\log^{k-1}X.
	\]
\end{theorem}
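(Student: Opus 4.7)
My plan is to adapt the Matom\"aki--Radziwi{\l}{\l}--Shao--Tao--Ter\"av\"ainen framework behind \eqref{mrstt} down to the Matom\"aki--Radziwi{\l}{\l} scale $H\geq\exp(C\sqrt{\log X\log\log X})$ by exploiting the multiplicative structure of $\tau_k$ more aggressively.  The first step is combinatorial: writing $\tau_k$ as a $k$-fold Dirichlet convolution and applying a Heath-Brown / Linnik style identity, I would decompose $\tau_k-\tau_k^*$ into a bounded number of pieces of the form $\alpha*\beta$ supported at scales $(A,B)$ with $AB\asymp X$.  By construction of $\tau_k^*$, the ``Type~I'' pieces with $\min(A,B)\leq X^\gamma$ cancel, up to an acceptable error, against the truncated divisor sum defining $\tau_k^*$; what remains are genuinely bilinear ``Type~II'' pieces with $X^\gamma\ll A,B\ll X^{1-\gamma}$.

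For each Type~II piece I then need to bound
\[
\int_X^{2X}\sup_{\alpha_1,\dots,\alpha_d\in\T}\Bigabs{\sum_{x<n\leq x+H}(\alpha*\beta)(n)\,e(\alpha_dn^d+\cdots+\alpha_1n)}\,\rd x
\]
via the standard major/minor-arc split in the leading coefficient $\alpha_d$.  On the minor arcs, iterated Weyl differencing drops the phase to degree one at the cost of an $\varepsilon$-net loss, reducing matters to linear exponential sums.  On the major arcs, the phase $e(\alpha_dn^d+\cdots+\alpha_1n)$ is essentially constant on short arithmetic progressions, reducing the question to equidistribution of $\tau_k-\tau_k^*$ in progressions, which follows from the Dirichlet hyperbola method together with the almost-all estimate of \cite{Sun}.

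The analytic heart of the argument is a Matom\"aki--Radziwi{\l}{\l} $L^2$ mean-value estimate for the associated Dirichlet polynomials.  After Cauchy--Schwarz in $x$ and Parseval, the problem reduces to controlling
\[
\int_{|t|\asymp X/H}\bigabs{A(\tfrac12+it)\,B(\tfrac12+it)}^2\,\rd t,
\]
where $A,B$ are Dirichlet polynomials with coefficients carrying a polynomial-phase twist (absorbed via the usual completion trick).  The bilinear factorization combined with Hal\'asz's inequality and the Huxley-type large-values machinery lets one save the required extra factor of $\log X$ precisely when $\log H\gg\sqrt{\log X\log\log X}$, which is the quantitative reason for the hypothesis on $H$.

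The chief obstacle is that $\tau_k$ is not $1$-bounded: its typical size is $\log^{k-1}X$, and extreme values can be much larger.  Consequently, every Hal\'asz-type input must be applied to a normalized version of $\tau_k$, which is no longer multiplicative.  I expect to first truncate $\tau_k(n)$ at a threshold such as $\exp(\log^{1/2}X)$, dispose of the exceptional set through a Shiu-type moment calculation, and then propagate the pointwise savings back through the bilinear decomposition with careful tracking of the $k$- and $\gamma$-dependence.  The interaction of this unboundedness with the uniformity required in the $d$-dimensional family of phase functions is, I expect, the most delicate step.
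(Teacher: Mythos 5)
Your proposal outlines a direct bilinear/Dirichlet-polynomial strategy, but this is not the route the paper takes, and more importantly it runs into a structural obstruction that the paper is specifically designed to avoid.

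The central difficulty of the theorem is the supremum over $\alpha_1,\dots,\alpha_d$ sitting \emph{inside} the $x$-integral: for each short interval $(x,x+H]$ you must beat a different, adversarially chosen polynomial phase. Your plan to apply ``Cauchy--Schwarz in $x$ and Parseval'' to reduce to $\int_{|t|\asymp X/H}|A(\tfrac12+it)B(\tfrac12+it)|^2\,\rd t$ only makes sense once the phase has been fixed (or at least restricted to a single frequency up to a mild averaging); when the phase varies with $x$, the completion/Parseval step does not produce a clean Dirichlet-polynomial mean value. This is precisely the issue that makes the ``local Fourier uniformity'' problem hard at the Matom\"aki--Radziwi{\l}{\l} scale and why the direct $L^2$ machinery is confined to the fixed-frequency (Proposition~\ref{long-short-divisor}/\ref{long-short-approx}) stage of the paper, not to the full theorem. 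Your proposal contains no mechanism for forcing all the local frequencies $\alpha_x$ to concentrate near a single global major-arc frequency --- which is what the paper does via discretization (Lemma~\ref{discretization}), Ramar\'e's identity to extract small prime factors (Lemma~\ref{4-1}), the scaling-down/connectedness argument (Propositions~\ref{scale-down},~\ref{connected}), and Walsh's pyramid/path construction (Section~5 and Proposition~\ref{disjoint-paths}). Without some replacement for that step, the argument does not close.

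There is also a secondary gap in the claim that ``the Type~I pieces with $\min(A,B)\le X^\gamma$ cancel, up to an acceptable error, against the truncated divisor sum defining $\tau_k^*$.'' The approximant $\tau_k^*(n)=\gamma^{1-k}\sum_{m\mid n,\,m\le X^\gamma}\tau_{k-1}(m)$ is a normalized truncation, not a Heath-Brown-style Type~I component of $\tau_k$; matching it term-by-term against a Heath-Brown or Linnik decomposition is not immediate and you give no argument for it. In the paper this matching is done only at the level of long averages in progressions (Proposition~\ref{long-difference}), using the minorant of \cite{SA}, and only after the global frequency has been isolated. Finally, the appeal to Hal\'asz and Huxley large-values to ``save the required extra factor of $\log X$'' is not a priori available here since, as the paper's outline stresses, the second moment of $\tau_k$ already costs a factor of $\log X$ relative to $(\log^{k-1}X)^2$, so the bounded-function $L^2$ trick of \cite[Lemma 2.4]{MRT20} fails; the paper replaces it with the averaged local divisor-correlation estimate (Lemma~\ref{divisor-correlation} via \cite{MRT-divisor}), a step that has no analogue in your sketch.
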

 Theorem \ref{main} suggests that, by replacing the approximant with ours,  one can extend the estimate (\ref{mrstt}) to the range $\exp\bigbrac{C\log^{1/2}X(\log\log X)^{1/2}}\leq H\leq X$ for all but $o(X)$ integers $x\in[X,2X]$.

\subsection*{Outline of the proof}

Our proof draws inspiration from the exploration of the local Fourier uniformity conjecture. Suppose that $\lambda$ is the Liouville function. \cite[Conjecture 1]{MRT20} predicts that if $H\to_X\infty$, then
\[
\int_X^{2X}\sup_{\alpha\in\T}\bigabs{\sum_{x<n\leq x+H}\lambda(n) e(n\alpha)}\,\rd x=o(XH).
\]
This conjecture has been investigated in papers  \cite{MRT20, MRTTZ, Wal, Wal23-1, Wal23-2}, and the current best-known result confirms its validity for $\exp\left(C\log^{1/2}X(\log\log X)^{1/2}\right)\leq H\leq X$ by Walsh \cite{Wal23-1}. Moreover, under the assumption of the Generalized Riemann Hypothesis, Walsh \cite{Wal23-2} verifies its effectiveness  when $H\geq(\log X)^{\psi(X)}$ where $\psi(X)$ tends to infinity arbitrarily slowly.


In the following, we will present a brief outline of our proof, omitting several challenges we encountered. One notable challenge is that in the proof, for distinct purposes, we need to shift and shrink the supported intervals. However, especially when dealing with very short intervals,  it becomes difficult to shift and shrink the supported intervals while maintaining local correlation information.

Following the ideas in studying the local Fourier uniformity conjecture, let's assume
\[
\int_X^{2X}\sup_{\alpha\in\T}\bigabs{\sum_{x<n\leq x+H}(\tau_k(n)-\tau_k^*(n)) e(n\alpha)}\,\rd x\gg XH\log^{k-1}X.	
\]
The discretization process allows us to identify $\gg X/H$ of $H$-separated integer $x\in(X,2X]$ such that, for each of these $x$, there exists a corresponding frequency $\alpha_x$ where the local correlation holds,
\begin{align}\label{0-0}
\bigabs{\sum_{x<n\leq x+H} (\tau_k(n)-\tau_k^*(n)) e(n\alpha_x)}\gg H\log^{k-1}X.
\end{align}
We then apply Ramar\'e's identity, serving a similar purpose as Elliott's inequality in the works \cite{MRT20, MRTTZ, Wal, Wal23-1, Wal23-2},  to extract a small prime factor $p\in[P/2,P]$ with $P\approx H^\eps$ from almost all $n\in(x,x+H]$. More or less, we can assume that
\[
\Bigabs{\sum_{\frac{x}{p}<n\leq \frac{x+H}{p}} \frac{\tau_k(n)-\tau_k^*(n)}{\omega_{(P^\eps,P]}(n)+1} e(n(p\alpha_x))}\gg\frac{H\log^{k-1}X}{p},
\]
where $\omega_{(P^\eps,P]}(n)$ counts the number of prime factors of $n$ in the interval $(P^\eps,P]$.
Because $x$ lies in the dyadic interval $[X,2X]$ and $p$ is within the dyadic interval $[P/2,P]$, varying both $p$ and $x$ allows us to predict substantial overlaps in the intervals $[\frac{x}{p},\frac{x+H}{p}]$. Let's say, $I\subseteq[X/P,(4X)/P]$ is an interval of length $|I|\asymp H/P$ and there exist numerous pairs $(p, (x, \alpha_x))$ such that
\begin{align}\label{0-1}
	\Bigabs{\sum_{n\in I}  \frac{\tau_k(n)-\tau_k^*(n)}{\omega_{(P^\eps,P]}(n)+1} e(n(p\alpha_x))}\gg\frac{H\log^{k-1}X}{P}.
\end{align}
We expect that there should only have a few frequencies dominates the above exponential sum, which means that there should be many pairs $(p,(x,\alpha_x))$ and $(q,(y,\alpha_y))$ such that
\begin{align}\label{0-2}
p\alpha_x\approx q\alpha_y \mod 1\quad \text{ and } \quad x/p\approx y/q.
\end{align}
To make the information we obtained about $\alpha_x$ valuable, it is essential to upgrade the congruence relation held $\mod{p'}$ for  large primes $p' \in [P^{\eps^2}, P^\eps]$ rather than $\mod 1$. Following the argument in \cite[Proposition 3.2]{MRT20}, we successfully deduce that
\begin{align}\label{p'-2}
p\gamma_{z_1}\approx q\gamma_{z_2} \mod{p'}\quad \text{ and } \quad z_1/p\approx z_2/q,
\end{align}
for many pairs $(z_1, \gamma_{z_1}), (z_2, \gamma_{z_2}) \in [X/(4P^{1+\eps}), 4X/P^{1+\eps}]\times\T$, $p,q\in[P/2,P]$, and many primes $p' \in [P^{\eps^2}, P^\eps]$. Besides, we also have the local correlation below
\[
\Bigabs{\sum_{z<n\leq z+H/P^{1+\eps}} \frac{\tau_k(n)-\tau_k^*(n)}{(\omega_{(P^\eps,P]}(n)+1)(\omega_{(P^{\eps^2},P^\eps]}(n)+1)}e(n\gamma_z)}\gg(H/P^{1+\eps})\log^{k-1}X.	
\]
Building on the idea of Walsh \cite{Wal23-1}, we can use the information (\ref{p'-2}) to build relatively long paths. This approach effectively establishes a global frequency $\gamma$ with $e(n\gamma)\approx e(an/q)n^{iT}$ for some small $q$ and such that $\gamma\approx\gamma_z$ for many local frequencies $\gamma_z$. To derive the contradiction, it is important to demonstrate that for almost all $z \in [X/P^{1+\eps}, 4X/P^{1+\eps}]$, we have
\[
\Bigabs{\sum_{z<n\leq z+H/P^{1+\eps}} \frac{\tau_k(n)-\tau_k^*(n)}{(\omega_{(P^\eps,P]}(n)+1)(\omega_{(P^{\eps^2},P^\eps]}(n)+1)}e(an/q)n^{iT}}\ll(H/P^{1+\eps})\log^{k-1}X.
\]
However, the function $\omega_{(P^\eps, P]}(n)$ introduced through the use of Ramar\'e's identity is not regular, posing challenges in establishing the above inequality. Therefore, instead of  following the approach outlined in \cite[Proposition 3.2]{MRT20}, we upgrade the congruent in (\ref{0-2}) to $\mod {p'}$ and analyze configurations $(x, \alpha_x) \in [X, 2X]\times \T^d$. By demonstrating the existence of numerous $\alpha_x$ satisfying $e(n\alpha_x)\approx e(an/q)n^{iT}$ for a small $q$ and extending \cite[Theorem 9.2]{MRII} to the divisor function, we can derive a contradiction with (\ref{0-0}).

Finally, let's redirect to  (\ref{0-1}). Observing that within such a short interval direct handling of the exponential in terms of $\tau_k$ becomes challenging, we intend to eliminate the weight $\tau_k$ at a notable cost.  However noting that the second moment of divisor function causes  logarithmic power loss, i.e. 
\[
X^{-1}\sum_{n\sim X}\tau_k^2(n)\gg \log^{k^2-1}X\gg \log^{k^2-2k+1} X \bigbrac{X^{-1}\sum_{X<n\leq 2X} \tau_k(n)}^2,
\]
 the approach outlined in \cite[Lemma 2.4]{MRT20} cannot be applied here. To navigate this obstacle, we address the local divisor correlations $\sum_{x<n\leq x+H}\tau_k(n)\tau_k(n+h)$ with the assistance of \cite{MRT-divisor}.



\subsection*{Acknowledgement}

The author would like to thank Kaisa Matom\"aki, Yu-Chen Sun, and Joni Ter\"av\"ainen for their valuable discussions and suggestions. The author also thanks Yang Cao for helping create the diagrams. This work was supported by the Academy of Finland grant no. 333707.


\section{Notations and preliminaries}

This section serves as introducing fundamental notations and collecting essential results concerning $k$-fold divisor function and frequencies that we required in the paper. Suppose that $2Q_0<Q$ are two numbers, denote $\mathcal  P(Q_0,Q)$ as the product of the primes in the interval $(Q_0,Q]$. Let $c$ and $\epsilon$ be small constants, both independent of $\eta$,  and we allow them to vary in distinct occurrence. Besides, we use the vector $\vec{\alpha} = (\alpha^{(d)}, \dots, \alpha^{(1)}) \in \mathbb{T}^d$ to represent a $d$-tuple frequency. 

If $A$ is a set, we use  $1_A$ to denote the indicator of $A$; and if $A$ is a statement, we let $1_A$ denote the number 1 when $A$ is true and $0$ when $A$ is false.


\subsection{Properties of divisor functions}

\begin{lemma}[Shiu's bound]\label{shiu-bound}
Let $A\geq 1$ and $\epsilon>0$ be fixed. Let $X^\eps\leq H\leq X$ and $q\leq H^{1-\eps}$. Let $f$ be a non-negative multiplicative function such that $f(p^l)\leq A^l$ for every prime power $p^l$ and $f(n)\ll_cn^c$ for every $c>0$. Then
\[
\sum_{X<n\leq X+H \atop n\equiv a\mod q} f(n)\ll\frac{H}{\phi(q)\log X} \exp\Bigbrac{\sum_{p\leq 2X,p
\nmid q}\frac{f(p)}{p}}.
\]

\begin{proof}
\cite[Theorem 1]{Shiu}.	
\end{proof}

\end{lemma}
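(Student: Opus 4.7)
The plan is to follow Shiu's original argument from \cite{Shiu}, which combines a Dirichlet convolution identity with a sieve-style truncation. The starting point is to write $f = 1 \ast g$ where $g := \mu \ast f$. Since $f$ is multiplicative with $f(p^\ell) \leq A^\ell$, the function $g$ is multiplicative with $g(p) = f(p) - 1$ and $|g(p^\ell)| \ll (A+1)^\ell$. Inserting this convolution gives
\[
\sum_{\substack{X<n\leq X+H \\ n \equiv a \pmod q}} f(n) \;=\; \sum_{d} g(d)\, N_q(d),
\]
where $N_q(d)$ counts the integers $n \in (X, X+H]$ with $d \mid n$ and $n \equiv a \pmod q$.

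I would then fix a truncation parameter $y = H^{\delta}$ for a small $\delta = \delta(\eps) > 0$ and split the divisor sum at $y$. For $d \leq y$ with $(d,q) \mid a$, a direct count yields $N_q(d) = H(d,q)/(dq) + O(1)$. Summing the main term over $d \leq y$ collapses by multiplicativity into a truncated Euler product
\[
\frac{H}{q} \prod_{\substack{p \leq y \\ p \nmid q}} \Big(1 + \frac{g(p)}{p} + \frac{g(p^2)}{p^2} + \cdots\Big) \cdot \prod_{p \mid q} (\text{local factor at } p).
\]
Invoking Mertens' theorem to absorb the missing primes in $(y, 2X]$, and identifying the factors at $p \mid q$ with $q/\phi(q)$, this rearranges into the desired shape
\[
\frac{H}{\phi(q) \log X} \exp\Big(\sum_{\substack{p \leq 2X \\ p \nmid q}} \frac{f(p)}{p}\Big).
\]
For the tail $d > y$, I would control the contribution by a sieve-type argument: any such $d$ either has a prime factor larger than $y^{1/s}$ for some bounded $s$, or is $y^{1/s}$-smooth (in which case a Rankin-type bound gives a saving). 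The hypothesis $q \leq H^{1-\eps}$ is used here to justify Brun--Titchmarsh-type estimates on the resulting short arithmetic-progression sums. Summing the $O(1)$ errors over $d \leq y$ costs at most $O(y) = O(H^\delta)$, which is negligible next to $H/\log X$ once $\delta$ is chosen small enough.

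The main obstacle is that $\sum_p |g(p)|/p$ diverges (since $|g(p)|$ can be of size $A$), so the error term $O(1)$ in the divisor count cannot be summed brutally over all $d$; the truncation at $y$ is precisely what circumvents this. A secondary but delicate point is producing $\phi(q)$ rather than $q$ in the denominator: one must carefully handle the local Euler factors at primes $p \mid q$, using multiplicativity of $g$ at such primes, and check that the error terms arising from $(d,q) > 1$ combine correctly. The condition $q \leq H^{1-\eps}$ is essential both for having enough room to take $y$ between $1$ and $H/q$, and for applying Brun--Titchmarsh in the tail.
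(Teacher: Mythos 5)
The paper proves this lemma simply by citing \cite[Theorem~1]{Shiu}, so there is no in-house argument to compare against; what matters is whether your sketch is sound and whether it reflects Shiu's actual method. It does neither. The Dirichlet convolution $f = 1\ast g$ with $g=\mu\ast f$ is not Shiu's approach, and it has a real gap at the tail. After truncating at $y = H^\delta$ you must control $\sum_{d>y} g(d) N_q(d)$, where $d$ can run all the way up to $X+H$. The $O(1)$ errors in $N_q(d)$ now cost up to $\sum_{y<d\le 2X}|g(d)|$, which is of order $X$ and utterly swamps the target $H/(\phi(q)\log X)\exp(\sum f(p)/p)$; and even the main part $H\sum_{d>y}|g(d)|/d$ is not small, since $g(p)=f(p)-1$ can be as large as $A-1$ in absolute value, so $\sum_{d\le Z}|g(d)|/d$ grows like a power of $\log Z$ rather than converging. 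The dichotomy you invoke for $d>y$ (either $d$ has a prime factor $>y^{1/s}$, or $d$ is $y^{1/s}$-smooth) is a tautology, not an estimate, and you have no mechanism for the first branch. This is exactly why a convolution/hyperbola argument works only for $H$ of size roughly $\sqrt{X}$ or larger, whereas Shiu's theorem holds down to $H\ge X^\eps$.

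Shiu's actual proof operates on the integers $n$ themselves, not on $f$. Each $n\in(X,X+H]$ is written uniquely as $n=a_n b_n$, where $a_n$ is the largest divisor of $n$ composed entirely of primes below a threshold $z$ and $b_n$ is the complementary $z$-rough part; multiplicativity gives $f(n)=f(a_n)f(b_n)$. One then classifies $n$ by the size of $a_n$: when $a_n$ is small, one fixes $a=a_n$, counts the cofactors $b_n$ in a short arithmetic progression free of small prime factors via the fundamental lemma of the sieve (this is where $q\le H^{1-\eps}$ enters, and where the $\phi(q)$ in the denominator ultimately comes from), and controls $f(b_n)\le A^{\omega(b_n)}$ by bounding the number of prime factors of $b_n$; when $a_n$ is large, one uses Rankin's trick. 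The point is that the smooth/rough split is applied to each $n$, so the error terms stay confined to the short interval — there is no analogue of your unbounded sum over all $d>y$. If you want to write a self-contained proof, that decomposition, not $f=1\ast g$, is the one to set up.
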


\begin{lemma}[Local uniformity of divisor function]\label{local-average}
Suppose that $\log^{k\log k}X\leq H\leq X$, then  we have
\[
\int_X^{2X}\Bigabs{H^{-1}\sum_{x\leq n\leq x+H}\tau_k(n)-x^{-1}\sum_{x<n\leq 2x}\tau_k(n)}\,\rd x=o( X\log^{k-1}X).
\]
\end{lemma}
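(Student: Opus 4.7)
The plan is to compare both averages to the common main term $C_k\log^{k-1}X$, where $C_k$ is the leading coefficient of the Dirichlet divisor polynomial. For the dyadic average $g(x):=x^{-1}\sum_{x<n\leq 2x}\tau_k(n)$, the classical Dirichlet hyperbola method gives $g(x)=P_k(\log x)+O(x^{-1/k+\eps})$ for a degree-$(k-1)$ polynomial $P_k$ with leading coefficient $C_k$. Since $\log x=\log X+O(1)$ on $[X,2X]$, this simplifies to $|g(x)-C_k\log^{k-1}X|\ll\log^{k-2}X$ pointwise, contributing only $O(X\log^{k-2}X)=o(X\log^{k-1}X)$ to the integral.

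For the local average $f(x):=H^{-1}\sum_{x\leq n\leq x+H}\tau_k(n)$, I would invoke Sun's almost-all short-interval asymptotic recalled in the introduction, valid for $H\geq\log^{k\log k-k+1+\eps}X$ (implied by our hypothesis). It provides an exceptional set $E\subset[X,2X]$ of measure $|E|\ll_A X\log^{-A}X$, for any fixed $A>0$, using the quantitative form supported by Sun's argument, such that $f(x)=C_k\log^{k-1}X+o(\log^{k-1}X)$ for $x\notin E$; the extension from integer $x$ to real $x$ is immediate since $f$ is piecewise constant on unit intervals. The contribution of $[X,2X]\setminus E$ to the integral is thus $o(X\log^{k-1}X)$, and it remains to handle the contribution of $E$.

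For this last step I would apply Cauchy--Schwarz, bounding $\int_E|f(x)-C_k\log^{k-1}X|\,\rd x$ by $|E|^{1/2}$ times the $L^2([X,2X])$ norm of $f-C_k\log^{k-1}X$. Expanding the $L^2$ norm reduces the problem to shifted divisor correlations $\sum_{n\sim X}\tau_k(n)\tau_k(n+h)$ for $|h|\leq H$; invoking the asymptotics of \cite{MRT-divisor}, the off-diagonal main terms cancel against the subtracted constant $C_k\log^{k-1}X$, and one obtains a second moment of order $X\log^{2(k-1)}X+H^{-1}X\log^{k^2-1}X$. Choosing $A>(k-1)^2$ in Sun's quantitative bound then makes the Cauchy--Schwarz product $o(X\log^{k-1}X)$, completing the proof.

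The principal obstacle is the diagonal $h=0$ term $H^{-1}X\log^{k^2-1}X$ in the second moment: for $k\geq 3$ it dominates $X\log^{2(k-1)}X$ near the lower endpoint $H=\log^{k\log k}X$, so the trivial bound $|E|=o(X)$ is insufficient and one must extract the quantitative decay $|E|\ll X\log^{-A}X$ from Sun's proof, then verify the off-diagonal cancellation in the variance computation; both steps are essentially bookkeeping given the references \cite{Sun} and \cite{MRT-divisor}, but they are the delicate points where the specific lower bound $H\geq\log^{k\log k}X$ gets consumed.
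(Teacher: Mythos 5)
The paper's proof of this lemma is a one-line citation: Lemma~\ref{local-average} \emph{is} \cite[Theorem~1.1]{Sun}, already stated there in exactly this $L^1$-average form (the ``almost all'' statement quoted in the introduction is the informal Chebyshev consequence, not Sun's actual theorem). Your proposal inverts the logical dependence: you take the weaker ``almost all'' corollary as your starting point and then try to upgrade it back to the $L^1$ bound via Cauchy--Schwarz, a second-moment computation, and a \emph{quantitative} exceptional-set bound $|E|\ll_A X\log^{-A}X$ that you attribute to ``Sun's argument.'' That quantitative bound is the load-bearing step and it is neither stated in \cite{Sun} (the paper records only the $L^1$ bound, from which Chebyshev gives only $|E|=o(X)$, not polylog decay) nor something that follows formally from the $L^1$ statement. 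You concede this yourself but wave it off as ``essentially bookkeeping''; in fact it would require reopening Sun's variance argument and tracking explicit savings, which is precisely the work that proving the lemma from scratch would entail. As written, your step~2 is therefore a genuine gap: without the unproved quantitative input, the second-moment term $H^{-1}X\log^{k^2-1}X$ swamps the Cauchy--Schwarz bound near $H=\log^{k\log k}X$ exactly as you diagnose, and the argument does not close.

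A minor secondary point: the reduction in your first paragraph (comparing $g(x)$ to $C_k\log^{k-1}X$ using the hyperbola method and $\log x=\log X+O(1)$) is fine and is indeed compatible with how one would set things up; but once one has gone to the trouble of proving a second-moment bound with cancellation of the main term, one would simply conclude via Cauchy--Schwarz on all of $[X,2X]$ with the observation that the $L^2$ bound itself forces the $L^1$ bound when $H$ is large enough, rather than routing through an unquantified exceptional set. In other words, even granting the second-moment computation, the architecture you chose makes the proof harder, not easier, than the direct route.
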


\begin{proof}
\cite[Theorem 1.1]{Sun}.
\end{proof}

We'd like to introduce a corollary to this lemma to make the exceptional set result clear.

\begin{corollary}[Exceptional set and pointwise upper bound]\label{exceptional-set}
Let $0<\delta<1$ be a parameter and $\log^{k\log k}X\leq H\leq X$. There is an exceptional set $\mathcal E\subseteq[X,2X]$ with cardinality $\#\mathcal E\ll\delta X$  and satisfing
\begin{enumerate}
	\item $\sum_{x\in\mathcal E}\sum_{x< n\leq x+H}(\tau_k(n)+\tau_k^*(n))\ll\delta X H\log^{k-1}X$ and,
	\item when $x\in[X,2X]\backslash\mathcal E$ we have the pointwise bound 
	 \[
	 \sum_{x< n\leq x+H}(\tau_k(n)+\tau_k^*(n))\ll H\log^{k-1}X.
	 \]
\end{enumerate}
	
\end{corollary}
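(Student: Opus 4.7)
The plan is to derive both conclusions from Lemma~\ref{local-average} together with the pointwise majorization $\tau_k^*(n)\leq \gamma^{1-k}\tau_k(n)$. This majorization is immediate from the identity $\sum_{m\mid n}\tau_{k-1}(m)=\tau_k(n)$: truncating the divisor sum in the definition of $\tau_k^*$ only decreases it. Because $\gamma$ is an absolute constant depending only on $k$, this lets us replace $\tau_k(n)+\tau_k^*(n)$ by $\tau_k(n)$ at the cost of an implicit multiplicative constant, so it is enough to prove the corollary with $\tau_k^*$ omitted.

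Next, set $f(x):=H^{-1}\sum_{x<n\leq x+H}\tau_k(n)$ and $g(x):=x^{-1}\sum_{x<n\leq 2x}\tau_k(n)$. The Dirichlet hyperbola asymptotic recalled in the introduction gives $g(x)=P_k(\log x)+O(x^{-1/k+\eps})\ll \log^{k-1}X$ uniformly for $x\in[X,2X]$, while Lemma~\ref{local-average} asserts $\int_X^{2X}|f(x)-g(x)|\,\rd x = o(X\log^{k-1}X)$; since $f$ is constant on each interval $[m,m+1)$ with $m\in\Z$, this integral differs only negligibly from the corresponding sum over integer $m\in[X,2X]$. Now fix an absolute constant $C=C(k)$ with $C\log^{k-1}X\geq 2\sup_{x\in[X,2X]}g(x)$ and define
\[
\mathcal E := \bigset{x\in [X,2X]\cap\Z : f(x)>C\log^{k-1}X}.
\]
For every $x\in\mathcal E$ the choice of $C$ forces $|f(x)-g(x)|\geq \tfrac{C}{2}\log^{k-1}X$, so Markov's inequality applied to Lemma~\ref{local-average} yields $|\mathcal E|=o(X)$, which is at most $\delta X$ once $X$ is large enough in terms of $\delta$. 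Condition (2) then follows immediately from the definition of $\mathcal E$, since for $x\notin\mathcal E$ we have $\sum_{x<n\leq x+H}\tau_k(n)=Hf(x)\leq CH\log^{k-1}X$.

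For condition (1) I would split
\[
\sum_{x\in\mathcal E}\sum_{x<n\leq x+H}\tau_k(n)=H\sum_{x\in\mathcal E}f(x)\leq H\sum_{x\in\mathcal E}g(x)+H\sum_{x\in\mathcal E}|f(x)-g(x)|,
\]
where the first term is $\ll |\mathcal E|\cdot H\log^{k-1}X = o(XH\log^{k-1}X)$ from the bound on $g$ and the size of $\mathcal E$, and the second is bounded by $H\cdot o(X\log^{k-1}X)=o(XH\log^{k-1}X)$ via Lemma~\ref{local-average}; both are therefore $\leq \delta XH\log^{k-1}X$ for $X$ sufficiently large. There is no serious obstacle here: once Lemma~\ref{local-average} and the Dirichlet mean value are in hand, the corollary is a routine Markov-type decomposition, the only bookkeeping points being the reduction to $\tau_k$ via $\tau_k^*\leq \gamma^{1-k}\tau_k$ and the harmless passage between the real integral and the discrete sum over integer $x$.
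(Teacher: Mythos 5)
Your proposal is correct and takes essentially the same route as the paper: both arguments hinge on Lemma~\ref{local-average} combined with the pointwise bound $\tau_k^*\leq\gamma^{1-k}\tau_k$ and a Markov-type selection of the exceptional set where the local average exceeds $C\log^{k-1}X$. The paper states the Markov step more tersely (simply declaring that $\#\mathcal E\ll\delta X$ and the exceptional-set sum is small follow from Lemma~\ref{local-average}), whereas you spell out the comparison with $g(x)$, the Dirichlet mean-value bound $g(x)\ll\log^{k-1}X$, and the split of the exceptional sum; but the underlying argument is identical.
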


\begin{proof}
Suppose that $\mathcal E$ is the  set  of integers $x\in(X,2X]$ such that the pointwise bound $\sum_{x<n\leq x+H}\tau_k(n)\ll H\log^{k-1}X$ doesn't hold, it can be deduced from  Lemma \ref{local-average} that  $\#\mathcal E\ll\delta X$ and $\sum_{x\in\mathcal E}\sum_{x<n\leq x+H}\tau_k(n)\ll\delta XH\log^{k-1}X$. Meanwhile, as it can be seen from the notion of the approximant that $\tau_k^*\leq\gamma^{1-k}\tau_k$ pointwise, one thus has
\[
\sum_{x\in\mathcal E} \sum_{x<n\leq x+H}(\tau_k(n)+\tau_k^*(n))\leq \sum_{x\in\mathcal E} \sum_{x<n\leq x+H} (1+\gamma^{1-k})\tau_k(n)\ll\delta XH\log^{k-1}X,
\]
and when $x\in[X,2X]\backslash\mathcal E$ one has
\[
\sum_{x<n\leq x+H}(\tau_k(n)+\tau_k^*(n)) \leq \sum_{x<n\leq x+H}(1+\gamma^{1-k})\tau_k(n)\ll H\log^{k-1}X.
\]
\end{proof}

\begin{lemma}[Local divisor correlations on average]\label{divisor-correlation}
Let $0<\delta<1$ be a parameter and $\log^{10000k\log k}X\leq H\leq X^{1/2-\eps}$. There is an exceptional set $\mathcal E\subseteq[-H,H]$ with cardinality $\#\mathcal E\ll\delta^2 H$ such that the following statement holds for all but $O(\delta X)$  integers $x\in[X,2X]$:
\begin{enumerate}
\item(pointwise divisor correlation) If $h\in[-H,H]\backslash\mathcal E$ we have 
\[
\sum_{x<n\leq x+H}\tau_k(n)\tau_k(n+h)\ll\delta^{-1}H\log^{2k-2}X;
\]
\item (negligible contribution from the exceptional set)
\[
\sum_{h\in\mathcal E}\sum_{x<n\leq x+H}\tau_k(n)\tau_k(n+h)\ll\delta^2 H^2\log^{2k-2}X.
\]	
\end{enumerate}

\end{lemma}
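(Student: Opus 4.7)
The plan is to deduce the lemma from an averaged divisor correlation estimate in \cite{MRT-divisor} combined with two applications of Markov's inequality. From \cite{MRT-divisor}, in the range $\log^{10000 k\log k}X\leq H\leq X^{1/2-\eps}$, I would invoke an $L^1$-averaged correlation estimate of the form
\[
\sum_{|h|\leq H}\int_X^{2X}\Bigabs{\sum_{x<n\leq x+H}\tau_k(n)\tau_k(n+h)-\mathcal M_h(x)}\,\rd x\leq \epsilon(X)\cdot XH^2\log^{2k-2}X,
\]
where $\mathcal M_h(x)$ is an explicit main term satisfying the pointwise bound $|\mathcal M_h(x)|\ll H\log^{2k-2}X$ uniformly in $|h|\leq H$ and $x\in[X,2X]$, and $\epsilon(X)$ decays to zero rapidly enough that $\epsilon(X)\leq \delta^3/H$ for $X$ sufficiently large depending on $\delta$. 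Write $E_h(x):=\sum_{x<n\leq x+H}\tau_k(n)\tau_k(n+h)-\mathcal M_h(x)$ for the error.

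I would then extract the two exceptional sets via Markov's inequality. First, define the exceptional set of shifts
\[
\mathcal E:=\Bigset{h\in[-H,H]:\int_X^{2X}|E_h(x)|\,\rd x>\delta^{-2}\epsilon(X)\cdot XH\log^{2k-2}X},
\]
so that $\#\mathcal E\ll\delta^2 H$ by Markov. Next, set $\mathcal E_X=\mathcal E_X^{(1)}\cup\mathcal E_X^{(2)}$ with
\[
\mathcal E_X^{(1)}:=\bigcup_{h\notin\mathcal E}\Bigset{x\in[X,2X]:|E_h(x)|>c\delta^{-1}H\log^{2k-2}X}
\]
and
\[
\mathcal E_X^{(2)}:=\Bigset{x\in[X,2X]:\sum_{h\in\mathcal E}|E_h(x)|>\delta^2 H^2\log^{2k-2}X}.
\]
A union bound in $h\notin\mathcal E$ combined with Markov applied to each individual $\int|E_h(x)|\,\rd x$ gives $\#\mathcal E_X^{(1)}\ll \epsilon(X)XH/\delta\ll\delta X$, and Markov applied directly to the total input bound gives $\#\mathcal E_X^{(2)}\ll \epsilon(X)X/\delta^2\ll\delta X$. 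For $x\notin\mathcal E_X$, condition (1) follows from $\sum_{x<n\leq x+H}\tau_k(n)\tau_k(n+h)\leq |\mathcal M_h(x)|+|E_h(x)|\ll\delta^{-1}H\log^{2k-2}X$ for every $h\notin\mathcal E$, and condition (2) follows from $\sum_{h\in\mathcal E}\sum_n\tau_k(n)\tau_k(n+h)\leq \#\mathcal E\cdot CH\log^{2k-2}X+\sum_{h\in\mathcal E}|E_h(x)|\ll\delta^2 H^2\log^{2k-2}X$, using $\#\mathcal E\ll\delta^2 H$ and the definition of $\mathcal E_X^{(2)}$.

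The main obstacle is the union-bound step controlling $\#\mathcal E_X^{(1)}$, which is lossy when only an $L^1$-in-$h$ input is available: it forces the stringent requirement $\epsilon(X)\leq \delta^3/H$, amounting to a near-polynomial saving in the divisor correlation estimate. The assumption $H\leq X^{1/2-\eps}$ is essential here, placing us in the range where such a saving is accessible via the multiplicative-function tools developed in \cite{MRT-divisor} (Voronoi summation, contour methods). A secondary technical point is verifying the uniform bound $|\mathcal M_h(x)|\ll H\log^{2k-2}X$ on the main term, which is standard and reflects the expected order of magnitude of divisor correlations.
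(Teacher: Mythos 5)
Your approach parallels the paper's in spirit (extract two exceptional sets by Markov/pigeonhole from an averaged divisor-correlation estimate), but the quantitative input you need is wrong by essentially a factor of $H$, and that gap is fatal.

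The paper's argument never forms a union over $h\notin\mathcal E$. It invokes \cite[Theorem 1.1]{MRT-divisor} in the form
\[
\sum_{0<|h|\leq H}\Bigabs{\sum_{x\sim X}\sum_{x<n\leq x+H}\tau_k(n)\tau_k(n+h)-c_{k,h}XH\log^{2k-2}X}\leq\delta^4 XH^2\log^{2k-2}X,
\]
i.e.\ only a tiny \emph{constant} saving $\delta^4$ over the trivial bound, which is all that estimate can deliver (and only for $X$ large in terms of $\delta$). The set $\mathcal E$ is then cut out by thresholding the $x$-averaged per-$h$ correlation; part (2) is obtained by a single Markov/pigeonhole applied to $\sum_{h\in\mathcal E}\sum_{x\sim X}$; and for part (1) the pigeonhole is applied \emph{at each fixed} $h\notin\mathcal E$ to the $x$-sum. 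Your $\mathcal E_X^{(1)}$, by contrast, is the union over $h\notin\mathcal E$ of $O(H)$ per-$h$ bad sets, and to keep that union of size $O(\delta X)$ you need $\epsilon(X)\ll\delta^2/H$. You acknowledge this obstacle, but your resolution --- that ``such a saving is accessible via the multiplicative-function tools developed in \cite{MRT-divisor}'' --- is not correct. The savings in \cite{MRT-divisor} over the trivial bound are far from $1/H$; they are slowly decaying (powers of $\log\log X$ or a small negative power of $\log X$), and in the regime $H\asymp\log^{10000k\log k}X$ --- which is precisely the range of interest for this paper --- a $1/H$ saving would itself be a $\log^{-O(1)}X$ saving, well beyond what is known. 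The restriction $H\leq X^{1/2-\eps}$ does not change this; it is not the range of $H$ that limits the saving but the depth of the averaged correlation asymptotic.

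A secondary issue: \cite[Theorem 1.1]{MRT-divisor} is an estimate for $0<|h|\leq H$, while your $L^1$ input is stated over $|h|\leq H$, including the diagonal. The $h=0$ term $\sum_{x\sim X}\sum_{x<n\leq x+H}\tau_k(n)^2$ must be handled separately; the paper does this via Shiu's bound, which gives $\ll XH\log^{2k-1}X$. Absorbing this extra $\log X$ factor into $\delta^4 XH^2\log^{2k-2}X$ is exactly where the lower bound $H\geq\log^{10000k\log k}X$ is used. Your write-up neither isolates $h=0$ nor uses the lower bound on $H$ anywhere, which is a tell-tale sign that the input estimate you posit is not the one actually available.
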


\begin{proof}
Let's fix an integer $0\neq|h|\leq H$ for a moment. When $X+H\leq n\leq 2X-H$, each term $\tau_k(n)\tau_k(n+h)$ occurs $H$ times in the expression
\[
\sum_{X+H<x\leq 2X-H}\sum_{x<n\leq x+H}\tau_k(n)\tau_k(n+h).
\]
Simultaneously, by considering the estimate $\sum_{X<x\leq X+H}\sum_{x<n\leq x+H}\tau_k(n)\tau_k(n+h)\ll H^2X^\eps$, one can deduce from \cite[Theorem 1.1]{MRT-divisor} that for each $0\neq|h|\leq H$  there exists a constant $c_{k,h}>0$ such that
\[
\sum_{0<|h|\leq H} \Bigabs{\sum_{x\sim X}\sum_{x<n\leq x+H}\tau_k(n)\tau_k(n+h)-c_{k,h}XH\log^{2k-2}X}\leq\delta^4XH^2\log^{2k-2}X.
\]
If we set $\mathcal E$ as the set of $|h|\leq H$ satisfying
\[
\sum_{x\sim X}\sum_{x<n\leq x+H}\tau_k(n)\tau_k(n+h)\gg XH\log^{2k-2}X,
\]
it is immediately clear that $\#\mathcal E\ll\delta^4H$. 	Moreover, considering the estimate $\sum_{x\sim X}\tau_k^2(x)\ll X\log^{2k-1}X$, derived from Lemma \ref{shiu-bound}, we can also obtain that
\begin{multline*}
\sum_{x\sim X}\sum_{h\in\mathcal E}\sum_{x<n\leq x+H}\tau_k(n)\tau_k(n+h)\\
\leq \sum_{x\sim X}\sum_{x<n\leq x+H}\tau_k^2(n)	+\sum_{h\in\mathcal E\backslash\set{0}}\sum_{x\sim X}\sum_{x<n\leq x+H}\tau_k(n)\tau_k(n+h)\\
\ll XH\log^{2k-1}X+\delta^4XH^2\log^{2k-2}X\ll\delta^4XH^2\log^{2k-2}X.
\end{multline*}
Therefore, it follows from the pigeonhole principle that for   all but $O(\delta^2 X)$ integers $x\in[X,2X]$ we have
\begin{align}\label{2.5.1}
\sum_{h\in\mathcal E}\sum_{x<n\leq x+H}\tau_k(n)\tau_k(n+h)\ll\delta^2 H^2\log^{2k-2}X.
\end{align}

Now we may assume that $h\not\in\mathcal E$ which means that
\[
\sum_{x\sim X}\sum_{x<n\leq x+H}\tau_k(n)\tau_k(n+h)\ll XH\log^{2k-2}X.
\]
Similarly, pigeonhole principle  indicates that when $x\in[X,2X]$ is outside a set with cardinality $O(\delta X)$ the inequality
\begin{align}\label{2.5.2}
\sum_{x<n\leq x+H}\tau_k(n)\tau_k(n+h)\ll \delta^{-1}H\log^{2k-2}X	
\end{align}
holds. The lemma is established by recognizing that there are, in total, at most $O(\delta X)$ integers $x$ in the interval $(X, 2X]$ for which either (\ref{2.5.1}) or (\ref{2.5.2}) does not hold.
\end{proof}


\subsection{Frequencies and pyramids}

The following lemma is an extension of \cite[Lemma 2.1]{Wal23-1}.

\begin{lemma}[Pair of frequencies]\label{2-1}
Let $\eps,\eps'>0$ be small parameters and $Q\in\N$. Let $\vec\alpha_1, \vec\alpha_2\in\T^d$ be frequencies and $p\neq q$ be primes not dividing $Q$. Suppose that 
\begin{align}\label{relation}
p^j\alpha_1^{(j)} \equiv q^j \alpha_2^{(j)} +O(\eps^j+\eps'^j)\mod Q \text{ for all } 1\leq j\leq d.	
\end{align}
Then there is a frequency $\vec\alpha\in\T^d$ such that
\[
p^j\alpha^{(j)} \equiv \alpha_2^{(j)} +O\bigbrac{\bigbrac{\frac{\eps'}{q}}^j} \mod Q \text{ and } 
q^j\alpha^{(j)} \equiv \alpha_1^{(j)} +O\Bigbrac{\bigbrac{\frac{\eps}{p}}^j} \mod Q. 
\]

\end{lemma}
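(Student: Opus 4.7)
The plan is to construct $\vec\alpha$ one coordinate at a time, using a Chinese-remainder-style formula together with a small perturbation that balances the two errors. For each fixed $1\leq j\leq d$ I would first lift the hypothesis to $\R$ by writing $p^j\alpha_1^{(j)}-q^j\alpha_2^{(j)}=M_jQ+E_j$ for some integer $M_j$ and real $E_j$ with $|E_j|\ll\eps^j+\eps'^j$. The key preliminary move is to split $E_j=E_j^{(1)}+E_j^{(2)}$ with $|E_j^{(1)}|\ll\eps^j$ and $|E_j^{(2)}|\ll\eps'^j$; this is possible for any $E_j$ of total size $\ll\eps^j+\eps'^j$ by assigning as much as possible to $E_j^{(1)}$ and putting the remainder in $E_j^{(2)}$.

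Next I would use that $\gcd(q^j,p^j)=1$ (since $p\neq q$ are primes) to choose an integer $m\in[0,p^j)$ with $mq^j\equiv M_j\pmod{p^j}$, and then set
\[
\alpha^{(j)}:=\frac{\alpha_2^{(j)}+mQ}{p^j}+\frac{E_j^{(2)}}{p^jq^j}.
\]
Multiplying by $p^j$ gives $p^j\alpha^{(j)}=\alpha_2^{(j)}+mQ+E_j^{(2)}/q^j\equiv\alpha_2^{(j)}+E_j^{(2)}/q^j\pmod Q$, producing an error of the claimed order $(\eps'/q)^j$. For the second congruence I would substitute $q^j\alpha_2^{(j)}=p^j\alpha_1^{(j)}-M_jQ-E_j$ and use that $mq^j-M_j\in p^j\Z$, which gives
\[
q^j\alpha^{(j)}=\alpha_1^{(j)}+\frac{(mq^j-M_j)Q}{p^j}-\frac{E_j-E_j^{(2)}}{p^j}\equiv\alpha_1^{(j)}-\frac{E_j^{(1)}}{p^j}\pmod Q,
\]
an error of the claimed order $(\eps/p)^j$. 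Carrying this out for every $j=1,\dots,d$ produces the desired $\vec\alpha\in\T^d$.

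The only delicate step, and the one genuinely new compared with the single-scale version \cite[Lemma 2.1]{Wal23-1}, will be the pairing of the error-splitting $E_j=E_j^{(1)}+E_j^{(2)}$ with the compensating perturbation $E_j^{(2)}/(p^jq^j)$: the bare CRT construction alone would leave the whole error $E_j$ on the $q^j\alpha\equiv\alpha_1$ side (rescaled by $1/p^j$), which is asymmetric and would force a bound of the wrong shape $\ll(\eps^j+\eps'^j)/p^j$ rather than $(\eps/p)^j$. The perturbation moves precisely the $\eps'^j$-sized portion of $E_j$ to the other congruence, where the $1/q^j$ rescaling turns it into $(\eps'/q)^j$. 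I do not anticipate any serious obstacle beyond this bookkeeping; the remaining verification is routine arithmetic and uses only $\gcd(pq,Q)=1$ and $p\neq q$.
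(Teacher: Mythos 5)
Your proof is correct. It takes a genuinely different route from the paper, so let me compare. The paper first chooses real representatives $\beta_1^{(j)}$, $\beta_2^{(j)}$ of the two ``divided'' frequencies (so that $p^j\beta_1^{(j)}\equiv\alpha_2^{(j)}\pmod Q$ and $q^j\beta_2^{(j)}\equiv\alpha_1^{(j)}\pmod Q$), uses the freedom of shifting each by multiples of $Q/p^j$ and $Q/q^j$ together with the hypothesis to ensure $|\beta_1^{(j)}-\beta_2^{(j)}|\ll(\eps^j+\eps'^j)/(pq)^j$, and then balances the two errors by taking $\alpha^{(j)}$ to be the \emph{convex combination} $\frac{\eps^j}{\eps^j+\eps'^j}\beta_1^{(j)}+\frac{\eps'^j}{\eps^j+\eps'^j}\beta_2^{(j)}$; when the combination is multiplied by $p^j$ the $\beta_1$-piece contributes zero error while the $\beta_2$-piece contributes a term of size $\frac{\eps'^j p^j}{\eps^j+\eps'^j}|\beta_1^{(j)}-\beta_2^{(j)}|\ll(\eps'/q)^j$, and symmetrically for $q^j$. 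Your approach instead makes the CRT lifting explicit: you write the hypothesis in $\R$ as $p^j\alpha_1^{(j)}-q^j\alpha_2^{(j)}=M_jQ+E_j$, pick the unique $m\pmod{p^j}$ with $mq^j\equiv M_j\pmod{p^j}$ (here you use $\gcd(p^j,q^j)=1$), which kills the $Q$-multiple on the $q^j\alpha$ side, and then balance the errors by the error-splitting $E_j=E_j^{(1)}+E_j^{(2)}$ plus the additive perturbation $E_j^{(2)}/(p^jq^j)$. Both methods achieve the balancing, but where the paper averages two candidate frequencies with ratio-dependent weights, you keep one explicit candidate and shift part of the error across by a direct perturbation. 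Your version is somewhat more explicit about where the integer arithmetic happens; the paper's is slightly slicker but requires care in the bookkeeping of which real representatives of $\beta_1^{(j)},\beta_2^{(j)}$ one is using. Either proof is complete and correct.
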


\begin{proof}
We are going to fix the degree $1\leq j\leq d$ in the proof. If we take $p^j\beta_1^{(j)}\equiv\alpha_2^{(j)} \mod Q$ and $q^j\beta_2^{(j)}\equiv \alpha_1^{(j)}\mod Q$, the assumption (\ref{relation}) implies that
\[
(pq)^j\beta_1^{(j)} \equiv (pq)^j \beta_2^{(j)}+O\bigbrac{\eps^j+\eps'^j}\mod Q.
\]
On the other hand, if it is necessary one can add an integer multiple of $\frac{Q}{p^j}$ to $\beta_1^{(j)}$ and an integer multiple of $\frac{Q}{q^j}$ to $\beta_2^{(j)}$ to assume that
\[
\beta_1^{(j)} \equiv \beta_2^{(j)}+O(\frac{Q}{2p^jq^j})\mod Q.
\]
Combining these two estimates  we  have
\[
\beta_1^{(j)} \equiv \beta_2^{(j)}+O\Bigbrac{\frac{\eps^j+\eps'^j}{p^jq^j}}\mod Q.
\]
Taking $\alpha^{(j)}=\frac{\eps^j}{\eps^j+\eps'^j}\beta_1^{(j)} + \frac{\eps'^j}{\eps^j+\eps'^j}\beta_2^{(j)}$, it is easy to see that 
\[
p^j\alpha^{(j)}-\alpha^{(j)}_2 \equiv \frac{\eps'^jp^j}{\eps^j+\eps'^j} \bigbrac{\beta_2^{(j)}-\beta_1^{(j)}}
\equiv O\Bigbrac{\frac{\eps'^j}{q^j}}\mod Q,
\]
and similarly, $q^j\alpha^{(j)}\equiv \alpha_1^{(j)}+O\bigbrac{\frac{\eps^j}{p^j}} \mod Q$. Therefore, the frequency $\vec\alpha=(\alpha^{(d)},\dots,\alpha^{(1)})$ satisfies the conditions of this lemma.	
\end{proof}

We now explore the situation where there exists an ordered sequence of frequencies, and any two adjacent frequencies in the sequence satisfy condition (\ref{relation}). Let's say, $\vec\alpha_1,\dots,\vec\alpha_{k+1}\in\T^d$ are frequencies, $p_1,\dots,p_k,q_1,\dots,q_k$ are distinct primes not dividing $Q$. Suppose that for every $1\leq i\leq k$ and $1\leq j\leq d$ we have
\begin{align}\label{relation-2}
p_i^j\alpha_i^{(j)} \equiv q_i^j\alpha_{i+1}^{(j)} +O(\eps^j)\mod Q.	
\end{align}
The first application of Lemma \ref{2-1} with $\eps=\eps'$ gives an ordered sequence $\set{ \vec\alpha_{2,1},\dots,\vec\alpha_{2,k}} $ such that
\begin{align}\label{t=2}
p_i^j\alpha_{2,i}^{(j)} \equiv \alpha_{i+1}^{(j)} +O\Bigbrac{(\frac{\eps}{q_i})^j}\mod Q
\text{ and } q_i^j\alpha_{2,i}^{(j)} \equiv \alpha_i^{(j)}+  O\Bigbrac{(\frac{\eps}{p_i})^j}\mod Q.
\end{align}
Taking into account the relations between both $\vec\alpha_{2,i}$ and $\vec\alpha_{2,i+1}$ with $\vec\alpha_{i+1}$ through the aforementioned congruence condition, it can be demonstrated, using the triangle inequality, that $\vec\alpha_{2,i}$ is also related to $\vec\alpha_{2,i+1}$ by the  congruence equation below
\[
p_i^j\alpha_{2,i}^{(j)} \equiv q_{i+1}^j\alpha_{2,i+1}^{(j)}+O\Bigbrac{\frac{\eps^j}{q_i^j}+\frac{\eps^j}{p_{i+1}^j}} \mod Q,
\]
and Lemma \ref{2-1} can be utilized once more. One can repeatedly apply Lemma \ref{2-1} to build a pyramid.

\begin{definition}[Pre-paths and pyramids]\label{pyramid}
Suppose that $Q\in\N$. We say that a parameter $0<\eps<1$, a $2k$-tuple distinct primes $p_1,\dots,p_k,q_1,\dots,q_k$ not dividing $Q$ and an ordered $(k+1)$-tuple $(\vec\alpha_1,\dots,\vec\alpha_{k+1})$ together 	form a \textit{pre-path $\mod Q$ of length $k$} if the tuples satisfy (\ref{relation-2}). We also say that the initial sequence $\{\vec\alpha_1,\dots,\vec\alpha_{k+1}\}$ and the corresponding sequence $\set{\vec\alpha_{t,i}}_{2\leq t\leq k+1,1\leq i\leq k+2-t}$ obtained in the previous paragraph collectively form a \textit{pyramid}, with $\vec\alpha_{k+1,1}$ referred to as the \textit{top element}. 
\end{definition}

It's worth noting that this definition slightly differs from the one in \cite[Definition 2.3]{Wal23-1}, where it was initially introduced. 
$$\xymatrix@=8ex{
\vec\alpha_{4,1} &\\
\vec\alpha_{3,1}  \ar@{-}[r]^{p_1}_{q_3}  \ar@{-}[u]_{q_3}  & \vec \alpha_{3,2}  \ar@{-}[lu]_{p_1} &\\
\vec\alpha_{2,1}  \ar@{-}[r]^{p_1}_{q_2}  \ar@{-}[u]_{q_2}  &  \vec\alpha_{2,2}  \ar@{-}[r]^{p_2}_{q_3}  \ar@{-}[u]_{q_3} \ar@{-}[lu]_{p_1} & \vec\alpha_{2,3} \ar@{-}[lu]_{p_2} &\\
\vec\alpha_{1}\ar@{-}[r]^{p_1}_{q_1}\ar@{-}[u]_{q_1} & \vec\alpha_{2} \ar@{-}[r]^{p_2}_{q_2}\ar@{-}[u]_{q_2}  \ar@{-}[lu]_{p_1} & \vec\alpha_{3} \ar@{-}[r]^{p_3}_{q_3}\ar@{-}[u]_{q_3}  \ar@{-}[lu]_{p_2} &\vec\alpha_{4}\ar@{-}[lu]_{p_3}  \\
 &&  
}$$ The diagram above illustrates a pyramid of length 3.

For a pre-path $\mod Q$ of length $k$, we can determine the congruence relations among any two elements in the pyramid. Nevertheless,  we are particularly interested in the congruence relations between the top element and elements in the initial sequence.

\begin{lemma}\label{top-element}
Let $Q\in\N$, $\set{\eps,p_1,\dots,q_k,(\vec\alpha_1,\dots,\vec\alpha_{k+1})}$	form a pre-path $\mod Q$ of length $k$, and $\vec\alpha$ be the top element of the pyramid. Suppose that $\prod_{i_1\leq i\leq i_2}p_i\asymp\prod_{i_1\leq i\leq i_2}q_i$ whenever $1\leq i_1< i_2\leq k$, then for all $0\leq i'\leq k$ and $1\leq j\leq d$ we have
\[
\prod_{i\leq i'}p_i^j\prod_{i'< i\leq k}q_i^j \alpha^{(j)} \equiv \alpha_{i'+1}^{(j)}+O(k\eps^j)\mod Q.
\]
\end{lemma}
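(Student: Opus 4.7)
I would strengthen the statement and prove it by induction on the level $t$ of the pyramid: for each $1 \leq t \leq k+1$, each $1 \leq i \leq k+2-t$, and each $0 \leq i' \leq t-1$,
\[
\Bigbrac{\prod_{l=1}^{i'} p_{i+l-1}^j \prod_{l=i'+1}^{t-1} q_{i+l-1}^j} \alpha_{t,i}^{(j)} \equiv \alpha_{i+i'}^{(j)} + O((t-1)\eps^j) \mod Q
\]
for every $1 \leq j \leq d$. Specializing to $t = k+1$ and $i = 1$ recovers the lemma, and the base case $t=1$ is trivial since $\vec\alpha_{1,i} = \vec\alpha_i$ and the product on the left is empty.

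For the inductive step, I would trace the construction from Definition~\ref{pyramid}: $\vec\alpha_{t,i}$ arises from a single application of Lemma~\ref{2-1} to the pair $(\vec\alpha_{t-1,i}, \vec\alpha_{t-1,i+1})$ with primes $p_i$ and $q_{i+t-2}$. When $t \geq 3$, the required joint relation
\[
p_i^j \alpha_{t-1,i}^{(j)} \equiv q_{i+t-2}^j \alpha_{t-1,i+1}^{(j)} + O(F_t^j) \mod Q
\]
arises by cancellation: both $p_i\alpha_{t-1,i}$ and $q_{i+t-2}\alpha_{t-1,i+1}$ are Lemma~\ref{2-1}-approximations of the same pyramid element $\vec\alpha_{t-2,i+1}$, so subtracting their defining congruences yields the joint relation with $F_t$ a small multiple of $\eps$ divided by primes interior to the block $[i,i+t-2]$. (For $t=2$ the joint relation is just the pre-path condition~(\ref{relation-2}).) Lemma~\ref{2-1} then delivers $\vec\alpha_{t,i}$ with
\[
p_i^j \alpha_{t,i}^{(j)} \equiv \alpha_{t-1,i+1}^{(j)} + O\bigbrac{(F_t/q_{i+t-2})^j}, \quad q_{i+t-2}^j \alpha_{t,i}^{(j)} \equiv \alpha_{t-1,i}^{(j)} + O\bigbrac{(F_t/p_i)^j} \mod Q.
\]

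To establish the inductive claim for $\vec\alpha_{t,i}$ at a given $i'$, I would use the first of these congruences (when $i' \geq 1$) to pass to $\vec\alpha_{t-1,i+1}$ with parameter $i'-1$, or the second (when $i' \leq t-2$) to pass to $\vec\alpha_{t-1,i}$ with parameter $i'$, and then invoke the inductive hypothesis on that element. In either case the amplification factor multiplying the new Lemma~\ref{2-1} error is exactly $M_{t,i,i'} := \prod_{l=1}^{i'} p_{i+l-1}\prod_{l=i'+1}^{t-1} q_{i+l-1}$ with the linking prime removed, while the denominator of the new error already contains the linking prime together with primes interior to the block. By the hypothesis $\prod_{i_1 \leq i \leq i_2} p_i \asymp \prod_{i_1 \leq i \leq i_2} q_i$, the remaining ratio of products is $O(1)$, so the amplified new contribution is $O(\eps^j)$; combined with the inductive $O((t-2)\eps^j)$ this gives the claimed $O((t-1)\eps^j)$.

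The main obstacle I foresee is the precise bookkeeping at each level of which primes appear as denominators of $F_t$ and which appear in $M_{t,i,i'}$. This is exactly where the balance condition $\prod p_i \asymp \prod q_i$ is needed: without it the leftover ratios could grow geometrically in $t$ and destroy the linear-in-$k$ bound. A clean implementation likely folds some information on the ``shape'' of $F_t$ into the inductive hypothesis rather than tracking it externally.
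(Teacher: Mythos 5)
Your proposal is correct and is essentially the same as the paper's argument: the paper also establishes the one-step congruences of the form $q^j_{i+t-2}\alpha_{t,i}^{(j)} \equiv \alpha_{t-1,i}^{(j)}+O(\eps^j/(p_i\cdots p_{i+t-2})^j)$ and $p^j_i\alpha_{t,i}^{(j)}\equiv \alpha_{t-1,i+1}^{(j)}+O(\eps^j/(q_i\cdots q_{i+t-2})^j)$ by induction on $t$ via precisely the cancellation step you describe, and then iterates them (first down the vertical, then along the diagonal) with the balance hypothesis controlling the amplified errors. Your repackaging into a single induction on $t$ is sound in principle, and you correctly identify that a clean implementation must carry the one-step ``rule-1/rule-2'' errors inside the inductive hypothesis rather than the long-range errors alone — that is exactly what the paper does by proving those two rules first.
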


\begin{proof}

For simplicity, we may represent $\vec\alpha_{i}$ as $\vec\alpha_{1,i}$, and denote the top element $\vec\alpha$ as $\vec\alpha_{k+1,1}$ in the proof.
Our approach follows the principle of the left lower triangular, signifying that we exclusively utilize relations in the lower directions first and then employ relations in the left-lower directions. Assuming $2\leq t\leq k+1$ and $1\leq i\leq k+2-t$, we assert that for all $1\leq j\leq d$, the following two congruence equations hold:
\begin{align}\label{rule-1}
q^j_{i+t-2}\alpha_{t,i}^{(j)} \equiv \alpha_{t-1,i}^{(j)}+\Bigbrac{\frac{\eps^j}{(p_i\dots p_{i+t-2})^j}}\mod Q	
\end{align}
and
\begin{align}\label{rule-2}
p^j_i\alpha_{t,i}^{(j)}\equiv \alpha_{t-1,i+1}^{(j)}+O\Bigbrac{\frac{\eps^j}{(q_i\dots q_{i+t-2})^j}} \mod Q.	
\end{align}
This claim can be easily confirmed through induction on $t$. When $t=2$ it follows from (\ref{t=2}). Now we suppose the claim holds for $2\leq t\leq k$ and examine the case for $t+1$. Utilizing (\ref{rule-1}) with $i$ replaced by $i+1$ and keeping (\ref{rule-2}) in mind, the triangle inequality demonstrates that
\[
q^j_{i+t-1}\alpha_{t,i+1}^{(j)} \equiv p_i^j\alpha_{t,i}^{(j)}+O\Bigbrac{\frac{\eps^j}{(p_{i+1}\dots p_{i+t-1})^j}+\frac{\eps^j}{(q_i\dots q_{i+t-2})^j}} \mod Q.
\]
The application of Lemma \ref{2-1} with $\eps=\frac{\eps}{p_{i+1}\dots p_{i+t-1}}$ and $\eps'=\frac{\eps}{q_i\dots q_{i+t-2}}$ then gives a frequency $\alpha_{t+1,i}^{(j)}$ such that
\[
q^j_{i+t-1}\alpha_{t+1,i}^{(j)} \equiv \alpha_{t,i}^{(j)}+O\Bigbrac{\frac{\eps^j}{(p_{i}\dots p_{i+t-1})^j}}\mod Q
\]
and \[
p_i^j\alpha_{t+1,i}^{(j)} \equiv \alpha_{t,i+1}^{(j)}+O\bigbrac{\frac{\eps^j}{(q_i\dots q_{i+t-1})^j}}\mod Q.
\]

Let's redirect our focus to the lemma itself. First of all, we aim to establish the relations between $\vec\alpha$ and the elements that share the same vertical line with $\vec\alpha$. Assuming $0\leq i'\leq k$, iteratively applying (\ref{rule-1}) with $i=1$ and considering the given assumption $\prod_{i_1\leq i\leq i_2}p_i\asymp\prod_{i_1\leq i\leq i_2}q_i$ one can get that
\begin{multline*}
\prod_{i'< t\leq k}q^j_t\alpha^{(j)} \equiv \prod_{i'< t\leq k-1}q_t^j\Bigbrac{\alpha_{k,1}^{(j)}+O\Bigbrac{\frac{\eps^j}{(p_1\dots p_k)^j}}}\\
	\equiv \prod_{i'< t\leq k-1}q_t^j\alpha_{k,1}^{(j)}+O\Bigbrac{\frac{\eps^j}{\prod_{t\leq i'}p_t^j}}\\
	\equiv\cdots\equiv \alpha_{i'+1,1}^{(j)}+O\Bigbrac{(k-i')\frac{\eps^j}{\prod_{t\leq i'}p_t^j}}\mod Q.
\end{multline*}
Next, we consider the situation involving elements in the horizontal line, namely, $\vec\alpha_{1,1},\dots,\vec\alpha_{1,k+1}$. Suppose that $0\leq i'\leq k$, by employing (\ref{rule-2}), we can establish the relation between $\vec\alpha_{1,i'+1}$ and $\vec\alpha_{i'+1,1}$.
\begin{multline*}
\prod_{i\leq i'}p_i^j\alpha_{i'+1,1}^{(j)}\equiv \prod_{i\leq i'-1}p_i^j\Bigbrac{\alpha_{i',2}^{(j)}+O\Bigbrac{\frac{\eps^j}{(q_1\dots q_{i'})^j}}}\\
\equiv	\prod_{i\leq i'-1}p_i^j\alpha_{i',2}^{(j)}+O(\eps^j)\equiv\cdots \equiv \alpha_{1,i'+1}^{(j)}+O(i'\eps^j)\mod Q.
\end{multline*}
Therefore, combining the above two estimates together, one has
\[
\prod_{i\leq i'}p_i^j\prod_{i'< i\leq k}q_i^j \alpha^{(j)} \equiv \prod_{i\leq i'}p_i^j\Bigbrac{\alpha_{i'+1,1}^{(j)}+O\Bigbrac{(k-i')\frac{\eps^j}{\prod_{t\leq i'}p_t^j}}}\equiv 	\alpha_{1,i'+1}^{(j)}+O(k\eps^j)\mod Q,
\]
this concludes the lemma.

\end{proof}


\section{Major arc estimates}

We are going to concentrate on integers having both small prime factors and also large prime factors in this section. In practice, suppose that $2\leq h\leq X$ and $\delta=\eta^A$ for some large constant $A>0$. Let $P_1=h^{2\delta^3}$, $Q_1=h^{\delta^2}$, $P_2=h^{100\delta^2}$ and $Q_2=h^\delta$. Additionally, following the notation used in \cite[page 90]{MRTTZ}, set $v_1=\delta^2/4000$, $v_2=\frac{1}{10}$, $P_3=X^{v_1}$, $Q_3=P_4=X^{\sqrt{v_1v_2}}$, and $Q_4=X^{v_2}$. Now, let $S\subseteq(X,2X]$ denote the set of integers $n$ with a prime factor in each interval $(P_i,Q_i]$ for $1\leq i\leq4$, as well as the total number of prime factors of $n$ does not exceed $(k+\eps )\log\log X$. That is,
\begin{align}\label{s-3}
S=\set{n\in(X,2X]:\Omega(n)\leq (k+\eps )\log\log X, \quad(n,\mathcal P(P_i,Q_i))>1\text{ for all }1\leq i\leq 4}.
\end{align}

\begin{remark}
The set $S$ we defined above allows us to draw an analogous conclusion to \cite[Theorem 9.2]{MRII} for the function $\tau_k1_S$. The assumptions that $n$ has prime factors in the intervals $(P_1,Q_1]$ and $(P_2,Q_2]$, along with the condition $\Omega(n)\leq (k+\eps )\log\log X$, enable us to achieve a M\"atom\"aki-Radziw{\l}{\l} type conclusion for the typical divisor function. To ensure that the exceptional set is extremely small, we need to adapt \cite[Proposition 8.3]{MRII}. Consequently, we also assume that $n$ has prime factors in the intervals $(P_3,Q_3]$ and $(P_3,Q_3]$. It's worth mentioning that the assumption $\Omega(n)\leq (k+\eps )\log\log X$ is not strictly necessary here because our parameter $H$ is not as small as in \cite[Theorem 1.3]{Sun}. However, to directly apply the arguments from \cite{Sun} without any modifications, we include this assumption in our typical set  $S$. 
\end{remark}

\begin{proposition}[Major-arc approximation]\label{major-arc-approx}
Let  $\log^{-1/1000k}X<\delta<\eta<1$ be two parameters. Suppose that $d\in\N$ is an integer, $\exp(\log^{1/2}X)\leq h\leq X^{1/2-\eps}$, $|T^*|\leq X^{d+1}$, $S\subseteq(X,2X]$ is  as in (\ref{s-3}), and $1\leq a\leq q\ll1$ are integers, then we have
\[
\Biggabs{\twosum{x<n\leq x+h}{n\equiv a\mod q}1_S(n)\tau_k(n)n^{iT^*}-\twosum{x<n\leq x+h}{n\equiv a\mod q}\tau_k^*(n)n^{iT^*}}\ll \delta h\log^{k-1}X
\]
for all but $O(X(h^{-\delta^3/50}+X^{-\frac{\delta^6}{10^{16}}}))$ integers $x\in(X,2X]$.
	
\end{proposition}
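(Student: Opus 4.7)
The plan is to reduce both sides of the desired inequality to a common main term by exploiting the prime factorizations encoded by $S$ together with the specific construction of the approximant $\tau_k^*$ as a normalized truncated divisor sum. The overall strategy is to establish an average (say $L^2$) estimate for the difference as $x$ varies over $(X, 2X]$, and then convert it to the stated exceptional-set bound $O(X(h^{-\delta^3/50} + X^{-\delta^6/10^{16}}))$ via Chebyshev's inequality. The two saving factors $h^{-\delta^3/50}$ and $X^{-\delta^6/10^{16}}$ correspond to a ``short-interval'' and a ``global'' contribution respectively, coming from two distinct factorization regimes among the four intervals $(P_i, Q_i]$.

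For the left-hand side, since $n \in S$ possesses a prime factor in each $(P_i, Q_i]$, I would apply a Ramar\'e-type identity to extract such a prime $p$ and factor $n = pm$. Using the multiplicativity $\tau_k(pm) = k\tau_k(m) + O(\tau_k(m) 1_{p \mid m})$ and the completely multiplicative character $n^{iT^*} = p^{iT^*} m^{iT^*}$, the sum becomes a bilinear form over $(p, m)$ in which the inner sum is a twisted $\tau_k$-sum in an AP over a short interval of length $h/p$. Depending on the regime of $T^*$, I would choose the appropriate interval among $(P_1, Q_1],\dots,(P_4, Q_4]$ for the Ramar\'e factorization, in analogy with the strategy of \cite{MRTTZ} for the Liouville function. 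The inner sum can then be evaluated or bounded using the local divisor-correlation estimates of Lemma~\ref{divisor-correlation} together with Shiu's bound (Lemma~\ref{shiu-bound}) and the standard asymptotics for $\tau_k$ in arithmetic progressions.

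For the right-hand side, the approximant $\tau_k^*(n) = \gamma^{1-k} \sum_{m \mid n,\, m \leq X^\gamma} \tau_{k-1}(m)$ has an essentially bilinear structure by construction. Swapping the order of summation yields
\[
\twosum{x<n\leq x+h}{n\equiv a\mod q}\tau_k^*(n)n^{iT^*}
= \gamma^{1-k} \sum_{m \leq X^\gamma} \tau_{k-1}(m) m^{iT^*} \twosum{x/m<\ell\leq (x+h)/m}{m\ell \equiv a \mod q} \ell^{iT^*},
\]
in which the innermost sum is a pure Archimedean-phase sum over $\ell$ in a short interval AP. This sum can be evaluated by comparison with an explicit integral, producing a main term plus an elementary error. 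The normalization constant $\gamma^{1-k}$ is chosen precisely so that the main term matches the one extracted from the left-hand side after summing over $p$ and $m$.

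The main obstacle is uniformity in $T^*$ across its huge range $|T^*| \leq X^{d+1}$. For $T^*$ near the upper end the phase $n^{iT^*}$ oscillates on scales much finer than $h$, and both sides become individually small, so one must show they cancel to the requisite accuracy rather than merely being small separately. The four-interval structure of $S$ is designed precisely to provide enough factorizations to handle both the short-interval scale $h$ (via $(P_1, Q_1]$ and $(P_2, Q_2]$) and the global scale $X$ (via $(P_3, Q_3]$ and $(P_4, Q_4]$); much of the technical effort will go into the careful bookkeeping of these four regimes together with the Ramar\'e weighting, the AP condition modulo $q$, and the conversion of the resulting averaged bound into the claimed exceptional-set estimate.
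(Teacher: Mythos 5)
Your plan identifies the right decomposition of the two sides, but it omits the core technique that makes the ``almost all $x$'' estimate with the specific saving $O(X(h^{-\delta^3/50}+X^{-\delta^6/10^{16}}))$ achievable: the mean-value theory of Dirichlet polynomials. You propose to establish an $L^2$ estimate for the difference and convert it via Chebyshev, but the tools you name --- Shiu's bound, the local divisor-correlation Lemma~\ref{divisor-correlation}, and AP asymptotics for $\tau_k$ --- are first-moment or pointwise inputs that cannot by themselves deliver a power saving on average over $x$ in a short-interval problem. The paper's actual route is through Dirichlet character orthogonality $\mod q$, a comparison of short averages with long averages for both $\tau_k$ (Proposition~\ref{long-short-divisor}) and $\tau_k^*$ (Proposition~\ref{long-short-approx}), and a deterministic long-interval comparison (Proposition~\ref{long-difference}); the key analytic inputs for the almost-all statement are \cite[Proposition 8.3 and Lemma 8.1]{MRII}, the large sieve, and Hal\'asz-type mean-value bounds for the Dirichlet polynomials $\sum_{n\sim X,\,n\in S}\tau_k(n)\chi(n)n^{iT^*}/n^{1+it}$ drawn from \cite{Sun}. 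In that framework the four intervals $(P_i,Q_i]$ function as sieve parameters inside the Dirichlet-polynomial machinery (they appear in the $\omega_{(P_i,Q_i]}(m)+1$ denominators), not as a menu of Ramar\'e factorizations selected by the regime of $T^*$; Ramar\'e's identity enters this paper only later, in Lemma~\ref{4-1}, for a different purpose.

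A second issue: you identify uniformity in $T^*$ as the main obstacle, reasoning that when $|T^*|$ is near $X^{d+1}$ both sides become small individually and ``one must show they cancel to the requisite accuracy rather than merely being small separately.'' This is backwards: if each side is individually $\ll\delta h\log^{k-1}X$ for almost all $x$, the triangle inequality finishes the proof immediately, with no need for cancellation \emph{between} the two sides. The genuine difficulty in the large-$|T^*|$ regime is to show that each short twisted sum is small for almost all $x$ (this is the content of part~(1) of Propositions~\ref{long-short-divisor} and~\ref{long-short-approx}), which again requires the Dirichlet-polynomial mean-value apparatus; the regime that actually requires matching of main terms is $\chi=\chi_0$ with $|T^*|\leq X$, handled through the long-interval comparison and the precise calibration of the constant $\gamma^{1-k}$ via \cite[Theorem 1.2]{SA}.
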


As in  \cite[Section 3]{MRSTT} and \cite[Section 3]{MSTT}, we are going to introduce long averages into the above left-hand side expression so that Proposition \ref{major-arc-approx} can be simplified into proving the following three assertions.

\begin{proposition}[Long and short averages of $\tau_k$]\label{long-short-divisor}
Let $\delta,T^*,S,q$ be as in Proposition \ref{major-arc-approx} and $\chi\mod q $ be a Dirichlet character.  Then when $\exp(\log^{1/2}X)\leq h\leq X^{1/2-\eps}\leq h_1 \leq X^{1-\frac{1}{100k}}$ one of the following statements holds for  all but $O(X(h^{-\delta^3/50}+X^{-\frac{\delta^6}{10^{16}}}))$ integers $x\in(X,2X]$:
\begin{enumerate}
\item if $\chi\neq\chi_0$ or $|T^*|>X$ then
	\[
	\bigabs{ \sum_{x<n\leq x+h\atop n\in S}\tau_k(n)\chi(n)n^{iT^*}}\ll\delta h\log^{k-1}X;
	\]
	\item otherwise we have
\[
	\biggabs{h^{-1} \sum_{x<n\leq x+h\atop n\in S}\tau_k(n)\chi_0(n)n^{iT^*}-h^{-1}\int_x^{x+h} u^{iT^*}\,\rd u\cdot h_1^{-1}\sum_{x<n\leq x+h_1}\tau_k(n)\chi_0(n)}	 \ll\delta \log^{k-1}X.
\]
\end{enumerate}

\end{proposition}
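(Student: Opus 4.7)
My plan is to follow the Dirichlet polynomial / Perron formula strategy employed in \cite{MRTTZ,MRSTT,MSTT} and related works. The crucial feature is that the condition $n \in S$ forces $n$ to have a prime factor in each of the four intervals $(P_i,Q_i]$; this lets one factor $n = pm$ with $p$ in a convenient range and convert the short-interval sum into a bilinear form amenable to mean value theorems for Dirichlet polynomials. Concretely, I would first apply Ramar\'e's identity to a prime factor in, say, $(P_3,Q_3]$, yielding
\[
\sum_{\substack{x<n\leq x+h\\ n\in S}}\tau_k(n)\chi(n)n^{iT^*} = \sum_{P_3<p\leq Q_3}\sum_{\substack{x/p<m\leq (x+h)/p\\ pm\in S}}\frac{\tau_k(pm)\chi(pm)(pm)^{iT^*}}{\omega_{(P_3,Q_3]}(pm)},
\]
and perform the analogous decomposition of the long-interval sum appearing in case (2). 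The point of this step is that on the inner sum $\tau_k(pm)$ is close to $k\tau_{k-1}(m)$ (after separating the prime $p$), so that the inner sum becomes a Dirichlet-polynomial-type object of total length $h/p$, which one can analyse via Perron.

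Next, I would express each inner sum via Perron's formula as a contour integral of $L(s,\chi)^k \cdot G_S(s,\chi)\cdot\frac{((x+h)/p)^{s+iT^*}-(x/p)^{s+iT^*}}{s+iT^*}$, where $G_S(s,\chi) := F_S(s,\chi)/L(s,\chi)^k$ is analytic and bounded in some strip $\Re s>1-c$, and $F_S(s,\chi)$ is the Dirichlet series of $\tau_k(n)\chi(n)1_S(n)$. Shift the contour to a line $\Re s=1-c$. In case (1) there is no residue to collect (either $\chi\neq\chi_0$ so $L(s,\chi)^k$ is entire, or $|T^*|>X$ so the pole at $s=1-iT^*$ produces a contribution smaller than the error because $((x+h)^{s+iT^*}-x^{s+iT^*})/(s+iT^*)$ is forced to be small), and the remaining contour integral is controlled by a Huxley-type second moment estimate for Dirichlet polynomials in short intervals of the kind used in \cite{MRTTZ}. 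In case (2), the pole of $\zeta(s)^k$ at $s=1+iT^*$ produces a main term matching $h^{-1}\int_x^{x+h}u^{iT^*}\,\rd u\cdot P_k(\log x)$; the Perron representation of the long sum $h_1^{-1}\sum_{x<n\leq x+h_1}\tau_k(n)\chi_0(n)$ produces an identical main term, so the two cancel in the difference and we are again reduced to bounding a contour integral with no residue.

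To upgrade the $L^2$-in-$x$ bound from the mean value theorem into a pointwise bound for all but an exceptional set of size $O(X(h^{-\delta^3/50}+X^{-\delta^6/10^{16}}))$, I would apply Chebyshev's inequality. The main obstacle is that the range $h\geq\exp(\log^{1/2}X)$ is too short for classical Ingham-type zero density estimates, so one needs the sharpest available large-values and mean-value inequalities for Dirichlet polynomials in short intervals, combined with the multiplicative structure of $S$ (the prime factors in each $(P_i,Q_i]$, $i=1,2,3,4$) to split the relevant Dirichlet polynomials into pieces of controlled length. The two terms $h^{-\delta^3/50}$ and $X^{-\delta^6/10^{16}}$ in the exceptional set size correspond to two distinct error sources: the second-moment bound on the short interval (controlled by $h$ via the primes in $(P_1,Q_1]$ and $(P_2,Q_2]$) and the contribution from zeros of $L(s,\chi)$ close to the $1$-line (controlled by the larger primes in $(P_3,Q_3]$ and $(P_4,Q_4]$). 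Balancing these two estimates against the logarithmic saving $\delta\log^{k-1}X$ will be the delicate point.
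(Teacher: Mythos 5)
Your proposal correctly names the high-level ingredients (multiplicative structure of $S$, Ramar\'e-style extraction of a prime, second moments in $x$, Chebyshev, MR-type decomposition of Dirichlet polynomials), but it is missing the structural pivot of the paper's argument and has a few concrete problems. The central missing idea is the pretentious minimizer $t_0$: the paper lets $t_0\in[-X,X]$ minimize $\sum_{p\leq 2X}(\tau_k(p)-\Re\,\tau_k(p)\chi(p)p^{iT^*-it})/p$, and proves a single approximation, valid for \emph{all} $\chi$ and $|T^*|\leq X^{d+1}$, of the short $S$-restricted average by $h^{-1}\int_x^{x+h}u^{it_0}\,\rd u\cdot h_1^{-1}\sum_{x<n\leq x+h_1,\,n\in S}\tau_k(n)\chi(n)n^{i(T^*-t_0)}$ for all but the claimed exceptional set, via a black-box application of \cite[Theorem 9.2 / Proposition 8.3]{MRII} plus the estimates of Sun. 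The case split in the statement is then resolved at the end: when $\chi=\chi_0$ and $|T^*|\leq X$ one has $t_0=T^*$, yielding case (2); otherwise one shows the twisted long average is itself small (via moments of $L(1/2+it,\chi)$ when $\chi\neq\chi_0$, and via the trivial size of $h^{-1}\int u^{it_0}\rd u$ when $|t_0|\geq X$). Your plan to obtain the case split directly through residue-cancellation in Perron's formula bypasses $t_0$ entirely, and it is not clear how you would then derive the exceptional-set bound in the claimed range $h\geq\exp(\log^{1/2}X)$: for such short intervals a pointwise Perron-plus-Huxley argument per fixed $x$ does not give the required saving; the Parseval/mean-value identity in $x$ (which is what \cite[Theorem 9.2]{MRII} runs) and the four-range MR sieve decomposition are the engine, not an optimization on top of contour shifting.

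Two further concrete points. First, after extracting a prime $p$ with $(p,m)=1$ you get $\tau_k(pm)=k\,\tau_k(m)$, not (approximately) $k\,\tau_{k-1}(m)$. Second, the long average in the statement has no $S$-restriction, so you must strip $n\in S$ from the long side; the paper does this with Shiu's bound, showing $\sum_{x<n\leq x+h_1,\,n\notin S}\tau_k(n)\ll\delta^k h_1\log^{k-1}X$. Your Perron representation keeps the $S$-condition on the long sum, which is not the quantity you ultimately need to compare against.
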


\begin{proof}
Let $|t_0|\leq X$ be the number $t\in[-X,X]$ that minimizes the distance
\[
\sum_{p\leq 2X }\frac{\tau_k(p)-\Re \tau_k(p)\chi(p)p^{iT^*-it}}{p}.
\]
First of all, we aim to prove that for all but at most $O(X(h^{-\delta^3/50}+X^{-\frac{\delta^6}{10^{16}}}))$ integers $x\in(X,2X]$ the following inequality holds
\begin{align} \label{all-character}
\biggabs{h^{-1} \sum_{x<n\leq x+h\atop n\in S}\tau_k(n)\chi(n)n^{iT^*}-\frac{\int_x^{x+h} u^{it_0}\,\rd u}{h}\cdot h_1^{-1}\sum_{x<n\leq x+h_1\atop n\in S}\tau_k(n)\chi(n)n^{i(T^*-t_0)}}	\leq \delta\log^{k-1}X.
	\end{align}
The proof essentially follows from \cite[Theorem 9.2]{MRII},  but it is somewhat simpler since our $h_1$ still maintains some distance from $X$. We will provide a brief proof with a particular focus on adapting the argument from \cite[Theorem 9.2]{MRII} to our cases with the assistance of \cite{Sun}. Taking $T_0=\log^{1/500k}X, P'=P_3, Q'=Q_3, P=P_4$ and $Q=Q_4$ in \cite[Proposition 8.3]{MRII}, considering the definition of $R_C(1+it)$ in \cite[(44)]{MRII}, the application of \cite[Proposition 8.3]{MRII} shows that
\begin{multline*}
h^{-1} \sum_{x<n\leq x+h\atop n\in S}\tau_k(n)\chi(n)n^{iT^*}-\frac{\int_x^{x+h} u^{it_0}\,\rd u}{h}\cdot h_1^{-1}\sum_{x<n\leq x+h_1\atop n\in S}\tau_k(n)\chi(n)n^{i(T^*-t_0)}\\
= A(x,h,h_1,t_0,T_0,\mathcal U)+O_\eps\Bigbrac{\log^{k-1}X\Bigbrac{\frac{h_1T_0^2}{X}+\frac{1}{h}+\frac{(1+C\log\delta^{-1})^2}{T_0^{10}}}}\\
+O\biggbrac{(1+C\log\delta^{-1})^2\sup_{|t|\leq X/2, |t-t_0|\geq  T_0 \atop \frac{X}{16Q_3Q_4}\leq C\leq \frac{16X}{P_3P_4}}\biggabs{\sum_{m\sim C}\frac{f(m)}{m^{1+it}(\omega_{(P_3,Q_3]}(m)+1)(\omega_{(P_4,Q_4]}(m)+1)}}},	
\end{multline*}
 where $f(m)=\tau_k(m)\chi(m)m^{iT^*}$ and $\omega_{(P_i,Q_i]}(m)$ counts the prime factors of $m$ in the interval $(P_i,Q_i]$.
The second term is obviously bounded by $O(\log^{k-1-1/250k}X)$ from the assumptions of $h$, $h_1$ and $T_0$, and this is acceptable. Meanwhile, taking note that  \cite[Lemma 3.3]{Sun} holds whenever $f$ is a $\tau_k$-bounded multiplicative function,  we can estimate the above third term as
\[
\ll (1+C\log\delta^{-1})^{4k} \log^{k-1}X \bigbrac{T_0^{-1/2} + \frac{\log\log X}{(\log X)^\rho}}.
\]
Here, from \cite[Lemma 3.1]{Sun}, $\rho$ can be chosen as 
\[
\rho=\frac{1}{2}\Bigbrac{\frac{k\alpha}{3}-\frac{2k}{3\pi}\sin\bigbrac{\frac{\pi\alpha}{2}}}
\] 
and $\alpha$ is defined in \cite[Definition 1.2]{Sun}. Note that in our case, we can set the parameter $\alpha$ in \cite[Definition 1.2]{Sun} to $1/2$, which allows us to choose $\rho$ as $1/20$. Consequently, the above estimate can be bounded by $O(\delta \log^{k-1}X)$.

In light of the pigeonhole principle, it suffices to prove that
\[
\frac{1}{X}\int_X^{2X}|A(x,h,h_1,t_0,T_0,\mathcal U)|^2\,\rd x\ll h^{-\frac{\delta^3}{50k}}+X^{-\frac{\delta^6}{10^{16}}}.
\]

It follows from \cite[(46)]{MRII} that
\begin{multline*}
\frac{1}{X}\int_X^{2X}|A(x,h,h_1,t_0,T_0,\mathcal U)|^2\,\rd x\ll\max_{X/h\leq T\leq X}\frac{X/h}{T}	\int_{[-T,T]\backslash\mathcal U}\Bigabs{\sum_{n\sim X\atop n\in S}\frac{\tau_k(n) \chi(n) n^{iT^*}}{n^{1+it}}}^2\,\rd t\\
+\frac{\log^{k^2-1}X}{h}+\frac{\log^5X}{X^{(\delta/20)^6/160}} \max_{X/h\leq T\leq X} \Bigbrac{\frac{\bigabs{\bigbrac{ \mathcal U +[-X^\eps,X^\eps]}\cap[-T,T]}\cdot Q_3}{hT^{1/2}}+1},
\end{multline*}
since the second term is obviously negligible, it can be simplified to prove the following two events:
 \begin{enumerate}
	\item $\mathcal U\subseteq(X,2X]$ is a set such that for any $T\in[X/h,X]$ we have
\[
\bigabs{\bigbrac{ \mathcal U +[-X^\eps,X^\eps]}\cap[-T,T]}\ll\frac{hT^{1/2}}{Q_3};	 
\]
\item The following estimate holds uniformly for $X/h\leq T\leq X$
\[
\int_{[-T,T]\backslash\mathcal U}\Bigabs{\sum_{n\sim X\atop n\in S}\frac{\tau_k(n) \chi(n) n^{iT^*}}{n^{1+it}}}^2\,\rd t\ll P_1^{-1/100}\frac{T\log^{2k-2}X}{X/h}.
\]
\end{enumerate}
Given a number $X/h\leq T\leq X$, let's assume that $\mathcal T_1$, $\mathcal T_2$, and $\mathcal U$ are disjoint sets with a union of $[-T,T]$. The precise definitions are in accordance with the proof of \cite[Proposition 4.4]{Sun}, with parameters $P_1$, $P_2$, $Q_1$, and $Q_2$ substituted as in our case. It follows from the assumption of $\mathcal U$ that if $t\in\mathcal U$, there exists some $v\in\mathcal I_2$ such that $|Q_{v,2}(1+it_0+it)|>e^{-\alpha_2v/P_1^{1/6}}$ with $\alpha_2=\frac{1}{4}-\frac{1}{100}$, then  \cite[Lemma 4.5 (ii)]{Sun} reveals that
\[
|\mathcal U|\ll|\mathcal I_2|T^{2\alpha_2}Q_2^{2\alpha_2}\exp\bigbrac{2k\frac{\log X\log_2X}{\log P_2}}\ll hT^{1/2}(\frac{h}{X})^{1/50} h^{-1+\frac{\delta}{2}}P_1^{1/6}.
\]
The first claim follows by noting that $h\leq X^{1/2-\eps}$ and $Q_3=X^{\delta/200}$. And the second claim follows from the estimates of $E_1$ and $E_2$ on \cite[pages 25-26]{Sun}. Thus, we have finished the proof of (\ref{all-character}) so far.

We can actually remove the restriction $n\in S$ from the long averages in (\ref{all-character}) by observing, from Lemma \ref{shiu-bound}, that
\[
\Bigabs{\sum_{x<n\leq x+h_1 \atop n\in S}\tau_k(n)-\sum_{x<n\leq x+h_1}\tau_k(n)}\ll\sum_{P_i<p_i\leq Q_i\atop i=1,2,3,4}\sum_{x<n\leq x+h_1}\tau_k(n)1_{(n,\mathcal P(P_i,Q_i))=1}\ll\delta^kh_1\log^{k-1}X.
\]
Besides, \cite[Lemma 3.6]{MRSTT} also informs us that when $|T^*-t_0|\geq X/h_1^{1-\eps}$, we have
\[
\bigabs{ \sum_{x<n\leq x+h_1} \tau_k(n)\chi(n) n^{i(T^*-t_0)} }\ll\delta h_1.
\]
Furthermore, when  $|T^*-t_0|\leq X/h_1^{1-\eps}$ we can reduce to the case that $T^*-t_0=0$ by decomposing the long intervals into short intervals of length $h_1^{1-2\eps}$. In practice,
\[
\sum_{x<n\leq x+h_1} \tau_k(n)\chi(n) n^{i(T^*-t_0)}=\sum_{x_0}\sum_{x_0<n\leq x_0+h_1^{1-2\eps}}\tau_k(n)\chi(n) n^{i(T^*-t_0)},
\]
where $x_0\in[x,x+h_1)$ takes $h_1^{1-2\eps}$-separated values. Since Taylor expansion denominates that $n^{i(T^*-t_0)}=x_0^{i(T^*-t_0)}+O(|T^*-t_0|h_1^{1-2\eps}/x)$ whenever $|n-x_0|\leq h_1^{1-2\eps}$, one thus has
\[
\sum_{x<n\leq x+h_1} \tau_k(n)\chi(n) n^{i(T^*-t_0)}=\sum_{x_0}x_0^{i(T^*-t_0)}\sum_{x_0<n\leq x_0+h_1^{1-2\eps}}\tau_k(n)\chi(n)+O(h_1^{1-\eps}). 
\]
Combining the above analysis, we can therefore conclude that
\begin{multline*}
\biggabs{h^{-1} \sum_{x<n\leq x+h\atop n\in S}\tau_k(n)\chi(n)n^{iT^*}-\frac{\int_x^{x+h} u^{it_0}\,\rd u}{h}\cdot h_1^{-1}\sum_{x_0}x_0^{i(T^*-t_0)}\sum_{x_0<n\leq x_0+h_1^{1-2\eps}}\tau_k(n)\chi(n)}\\
\ll \delta \log^{k-1}X	
\end{multline*}
for all but at most $O(X(h^{-\delta^3/50}+X^{-\frac{\delta^6}{10^{16}}}))$ integers $x\in(X,2X]$.

Our task now reduces to prove that when $\chi \neq \chi_0$ or $\chi=\chi_0$ but $|T^*|>X$ the long average in the left-hand side expression above is always bounded by $O(\delta\log^{k-1}X)$, because when $\chi=\chi_0$ and $|T^*|\leq X$ it is clear from the notion of $t_0$ that $t_0=T^*$. To achieve this, let $h_2=h_1^{1-2\eps}$ and assume $\chi\neq\chi_0$. Perron's formula  leads us to
\[
\sum_{x_0<n\leq x_0+h_2}\tau_k(n)\chi(n)=\frac{1}{2\pi i}\int_{-T}^TL(1+it,\chi)^k\frac{(x_0+h_2)^{1+it}-x_0^{1+it}}{1+it}\,\rd t+O(\frac{x_0^\eps}{T}).
\]
By shifting the contour of the integration to the region $\sigma=1/2$ we get that
\begin{multline*}
	\sum_{x_0<n\leq x_0+h_2}\tau_k(n)\chi(n) =\frac{1}{2\pi i}\int_{-T}^{T}L(1/2+it,\chi)^k \frac{(x_0+h_2)^{1/2+it}-x_0^{1/2+it}}{1/2+it}\,\rd t\\
	\pm \frac{1}{2\pi i}\int_{1}^{1/2} L(u\mp iT,\chi)^k \frac{(x_0+h_2)^{u+iT}-x_0^{u+iT}}{u+iT}\,\rd u +O(\frac{x_0^\eps}{T}).
\end{multline*}
We may apply \cite[Lemmas 10.9-10.10]{Har} and Taylor expansion to bound the vertical integral as follows
\[
\ll \frac{h_2}{x_0^{1/2}}\sup_{|t|\leq T}|L(\frac{1}{2}+it,\chi)|^{\max\set{0,k-4}}\int_{-T}^T|L(\frac{1}{2}+it,\chi)|^4\,\rd t\ll \frac{h_2}{x_0^{1/2}}T^{k/2-1+\eps}\ll h_2^{1-\eps}
\]
if we take $T=X^{1/k}$. Besides, when $k\leq 4$ similar result can be obtained by an application of Cauchy-Schwarz inequality to derive the fourth moment of the $L$-function. Simultaneously, the horizontal line integrals can be estimated as follows:
\[
\ll T^{-1}\int_{1/2}^1 T^{1-u+\eps}x^u\,\rd u\ll x/T \ll h_2,
\]
which is also negligible. 

When $\chi=\chi_0$ and $|T^*|\leq X+X/h_1^{1-\eps}$, it can be seen from the notion of $t_0$ that
\[
t_0=\begin{cases}
X \qquad&\text{if }X<T^*\leq  X+X/h_1^{1-\eps},\\
T^*	\qquad&\text{if } |T^*|\leq X,\\
-X&\text{if } -X-X/h_1^{1-\eps}\leq T^*<-X.
\end{cases}
\]
Besides, as noting that \[
h^{-1} \sup_{|t_0|\geq x}\Bigabs{\int_x^{x+h} u^{it_0}\,\rd u}\ll h^{-1} \sup_{|t_0|\geq x}\bigabs{\frac{(x+h)^{1+it_0}-x^{1+it_0}}{1+it_0}}\ll h^{-1},
\]
which means that when $|t_0|\geq X$ the long average is also negligible. Therefore we can assume that $|T^*|\leq X$ and the proposition follows.

\end{proof}

\begin{proposition}[Long and short averages of $\tau_k^*$]\label{long-short-approx}
 Suppose that the assumptions are as in Proposition \ref{long-short-divisor}. Then when $\exp(\log^{1/2}X)\leq h\leq X^{1/2-\eps}\leq h_1 \leq X^{1-\frac{1}{100k}}$ one of the following statements holds for  all but $O(X(h^{-\delta^3/50}+X^{-\frac{\delta^6}{10^{16}}}))$ integers $x\in(X,2X]$:
\begin{enumerate}

	\item if $\chi\neq\chi_0$ or $|T^*|>X$ then
	\[
	\bigabs{ \sum_{x<n\leq x+h}\tau_k^*(n)\chi(n)n^{iT^*}}\ll\delta h\log^{k-1}X;
	\]
	\item otherwise we have
	\begin{multline*}
	\biggabs{h^{-1} \sum_{x<n\leq x+h}\tau_k^*(n)\chi_0(n)n^{iT^*}-h^{-1}\int_x^{x+h} u^{iT^*}\,\rd u\cdot h_1^{-1}\sum_{x<n\leq x+h_1}\tau_k^*(n)\chi_0(n)}\\	 \ll\delta \log^{k-1}X.
	\end{multline*}
	\end{enumerate}
\end{proposition}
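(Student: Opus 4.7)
The plan is to mimic the proof of Proposition \ref{long-short-divisor}, exploiting the fact that $\tau_k^*$ factors as the Dirichlet convolution $\tau_k^*=\gamma^{1-k}\cdot 1*(\tau_{k-1}1_{\leq X^\gamma})$, which is structurally simpler than $\tau_k$. As before, I would first choose $|t_0|\leq X$ minimising $\sum_{p\leq 2X}(\tau_k(p)-\real\tau_k(p)\chi(p)p^{iT^*-it})/p$, and aim to establish the analogue of \eqref{all-character} with $\tau_k^*$ in place of $\tau_k$ and without the restriction $n\in S$ (since the convolution structure of $\tau_k^*$ already plays the role that $S$ played for $\tau_k$). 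The crucial input is \cite[Proposition 8.3]{MRII} applied with $f=\tau_k^*$; since $\tau_k^*\leq\gamma^{1-k}\tau_k$ pointwise, the function is $\tau_k$-bounded and the machinery goes through. This should produce a decomposition of the left-hand side as $A(x,h,h_1,t_0,h_1^{1/(100k)},\mathcal{U})$ plus negligible error terms and a supremum of Dirichlet polynomials associated to $\tau_k^*$. The latter supremum is bounded by $O(h_1^{-1/(500k)})$ via \cite[Lemma 3.2]{MRSTT}, which applies to any $\tau_k$-bounded function, while the mean-square bound on $A$ in $x\in[X,2X]$ would follow from the same $\mathcal{T}_1,\mathcal{T}_2,\mathcal{U}$ decomposition and large-values estimates from \cite[Proposition 4.4]{Sun}.

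Next I would analyse the long average $h_1^{-1}\sum_{x<n\leq x+h_1}\tau_k^*(n)\chi(n)n^{i(T^*-t_0)}$ by expanding the definition of $\tau_k^*$ and noting that the Dirichlet series of $n\mapsto\tau_k^*(n)\chi(n)n^{iT^*}$ factors as $\gamma^{1-k}L(s-iT^*,\chi)M(s-iT^*)$, where $M(w)=\sum_{m\leq X^\gamma,(m,q)=1}\tau_{k-1}(m)\chi(m)m^{-w}$ is a Dirichlet polynomial of length $X^\gamma$. Shifting the Perron contour to $\real s=1/2$, three sub-cases arise according to $\chi$ and $T^*$. When $\chi\neq\chi_0$, no pole is crossed and the contour integral is bounded using the fourth-moment estimate for $L(s,\chi)$ combined with the pointwise bound $|M(1/2+it)|\ll X^{\gamma/2+\eps}$. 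When $\chi=\chi_0$ and $|T^*|\leq X$, we have $t_0=T^*$ so the factor $n^{i(T^*-t_0)}$ disappears, and the simple pole of $L(s,\chi_0)$ at $s=1$ contributes the claimed main term $\gamma^{1-k}(\phi(q)/q)M(1)\cdot h_1$, matching case (2). Finally, when $\chi=\chi_0$ and $|T^*|>X$, we have $|t_0|=X$ and $|h^{-1}\int_x^{x+h}u^{it_0}\,\rd u|\ll h^{-1}$, which makes the scaled product of the short and long averages negligible against the target bound.

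The main obstacle is verifying that \cite[Proposition 8.3]{MRII}, formulated originally with $\tau_k$-type inputs and an $S$-restriction, continues to operate correctly with $f=\tau_k^*$ and without any $S$-restriction. Since $\tau_k^*$ is $\tau_k$-bounded and admits the explicit convolution factorisation, this transfer should be routine, but it requires care to ensure that the Dirichlet polynomial bounds (\cite[Lemma 3.2]{MRSTT} and the large-values estimates from \cite{Sun}) carry over with the desired exceptional-set quality $O(X(h^{-\delta^3/50}+X^{-\delta^6/10^{16}}))$. A secondary concern is that the factor $M$ of length $X^\gamma$ could in principle destroy the large-values estimates; however, the smallness of $\gamma<1/(5k)$ ensures that $M$ behaves like a short Dirichlet polynomial whose $L^2$ mass along the critical line is acceptable, so the estimates should go through with essentially the same quality as in the proof of Proposition \ref{long-short-divisor}.
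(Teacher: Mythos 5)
Your proposal diverges from the paper's proof in a way that matters: you build the comparison of short and long averages of $\tau_k^*$ on \cite[Proposition 8.3]{MRII}, declaring that since $\tau_k^*$ is $\tau_k$-bounded ``the machinery goes through''. This is the gap. Proposition 8.3 of \cite{MRII} is formulated for multiplicative (or nearly multiplicative) functions, because its Dirichlet-polynomial decomposition factors out the prime-factor-counting polynomials over $(P_i,Q_i]$ via Ramar\'e's identity — that is exactly why Proposition \ref{long-short-divisor} carries the restriction $n\in S$. But $\tau_k^*(n)=\gamma^{1-k}\sum_{m\mid n,\,m\leq X^\gamma}\tau_{k-1}(m)$ is \emph{not} multiplicative: for coprime $n_1,n_2$ the cut $m_1m_2\leq X^\gamma$ does not factor, so $\tau_k^*(n_1n_2)\neq\tau_k^*(n_1)\tau_k^*(n_2)$. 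Being $\tau_k$-bounded is a growth condition, not a structural one, and is insufficient to trigger the MRII decomposition. Your parenthetical remark that the convolution structure of $\tau_k^*$ ``plays the role that $S$ played'' is morally the right intuition, but you do not turn it into an argument; the third paragraph flags the worry and then dismisses it as ``routine'', which it is not.

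The paper in fact avoids \cite[Proposition 8.3]{MRII} entirely for $\tau_k^*$. It applies Perron's formula directly to both the short average and the long average, splits the $t$-integral at $W=h_1^{1/(100k)}$, matches the $|t-t_0|\leq W$ pieces by Taylor expansion, and controls the $|t-t_0|>W$ tail in mean square over $x\in[X,2X]$ via \cite[Lemma 8.1]{MRII}. The key new estimate is then a mean-value bound for $D(X;1+it)=\sum_{n\sim X}\tau_k^*(n)\chi(n)n^{-1-it}$, which is obtained precisely from the convolution factorisation you mention: $D$ splits into a short polynomial of length $\leq X^\gamma$ weighted by $\tau_{k-1}$ and a long, essentially $\zeta$-like polynomial over $n\sim X/m$. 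Cauchy--Schwarz, the mean-value theorem \cite[Lemma 3.1]{MRII}, and the fourth-moment input \cite[Theorem 1]{BH} close the argument. None of Sun's large-values machinery for $\tau_k 1_S$ or the MRII factored decomposition appears, because the explicit Dirichlet-series factorisation $L(s,\chi)M(s)$ of $\tau_k^*$ renders it unnecessary. Your second paragraph (contour shift for the long average when $\chi\neq\chi_0$, pole at $s=1$ when $\chi=\chi_0$) is a perfectly reasonable complement — the paper is terser here, simply invoking the analogous calculation from Proposition \ref{long-short-divisor} — but it does not repair the first paragraph, which is the one carrying the weight of the exceptional-set bound $O(X(h^{-\delta^3/50}+X^{-\delta^6/10^{16}}))$.
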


\begin{proof}

Let $t_0$ be as in Proposition \ref{long-short-divisor}, it suffices to prove that for arbitrary Dirichlet character $\chi\mod q$ and any number $|T^*|\leq X^{d+1}$ 
\begin{multline*}
	\biggabs{h^{-1} \sum_{x<n\leq x+h}\tau_k^*(n)\chi(n)n^{iT^*}-h^{-1}\int_x^{x+h} u^{it_0}\,\rd u\cdot h_1^{-1}\sum_{x<n\leq x+h_1}\tau_k^*(n)\chi(n)n^{i(T^*-t_0)}}\\	 \ll\delta \log^{k-1}X
	\end{multline*}
holds for  all but $O(X(h^{-\delta^3/50}+X^{-\frac{\delta^6}{10^{16}}}))$ integers $x\in(X,2X]$, since it can be observed from the proof of Proposition \ref{long-short-divisor} that the long average is always bounded by $O(\delta\log^{k-1}X)$ if $\chi\neq\chi_0$ or $|T^*|\geq X$ and we have $t_0=T^*$ when $\chi=\chi_0$ and $|T^*|\leq X$.

Let's set $D(X;1+it)=\sum_{n\sim X}\frac{\tau_k^*(n)\chi(n)}{n^{1+it}}$ and $W=h_1^{\frac{1}{100k}}$. It follows from Perron's formula that the short average $h^{-1} \sum_{x<n\leq x+h}\tau_k^*(n)\chi(n)n^{iT^*}$ equals to
\[ 
\frac{h^{-1}}{2\pi i} \Bigbrac{\int_{|t-t_0|\leq W} +\int_{|t-t_0|>W}}  D(X;1+i(t-T^*))	\frac{(x+h)^{1+it}-x^{1+it}}{1+it}\,\rd t.	
\]
By making the change of variables $t\to t+t_0$, the first integral is equal to
\[
\frac{h^{-1}}{2\pi i}\int_{|t|\leq W} D(X;1+i(t+t_0-T^*))\int_x^{x+h} u^{i(t+t_0)}\,\rd u\rd t.
\]
Taking note, from  Taylor expansion, that $u^{it}=x^{it}+O(h|t|/x)$ whenever $x\leq u\leq x+h$, this integral then equals to
\[
h^{-1}\int_x^{x+h} u^{it_0}\,\rd u\frac{1}{2\pi i}\int_{|t|\leq W}D(X;1+i(t+t_0-T^*)) x^{it}\,\rd t+O(W^{3/2}h\log^{2k}X/X),
\]
as from Cauchy-Schwarz inequality and \cite[Lemma 4.3]{Sun} one has
\[
\int_{|t|\leq W}|t||D(X;1+i(t+t_0-T^*)|\,\rd t\ll  W^{3/2}\log^{2k}X.
\]
 On the other hand, for the long average,  Perron's formula also reveals that
\begin{multline*}
h^{-1}\int_x^{x+h} u^{it_0}\,\rd u\cdot h_1^{-1}\sum_{x<n\leq x+h_1}\tau_k^*(n)\chi(n)n^{i(T^*-t_0)}\\
=\frac{\int_x^{x+h} u^{it_0}\,\rd u}{h}\cdot\frac{h_1^{-1}}{2\pi i}\biggbrac{\int_{|t|\leq W}+\int_{|t|>W}}D(X;1+i(t+t_0-T^*))\frac{(x+h_1)^{1+it}-x^{1+it}}{1+it}\,\rd t.	
\end{multline*}
The application of Taylor expansion yields that the first term equals to
\begin{multline*}
h^{-1}\int_x^{x+h} u^{it_0}\,\rd u	\cdot\frac{1}{2\pi i}\int_{|t|\leq W}D(X;1+i(t+t_0-T^*))x^{it}\frac{(1+h_1/x)^{1+it}-1}{(1+it)h_1/x}\,\rd t\\
=h^{-1}\int_x^{x+h} u^{it_0}\,\rd u	\cdot\frac{1}{2\pi i}\int_{|t|\leq W}D(X;1+i(t+t_0-T^*))x^{it}\,\rd t+O(h^{-\frac{1}{20k}}).
\end{multline*}
Taking note that the main terms within the integral region $|t|\leq W$ in both long and short averages are equal, and the error terms are acceptable, it suffices to show that, by taking the second moment of $x\in[X,2X]$, the integral over $|t|>W$ is acceptable.
\cite[Lemma 8.1]{MRII}  shows that
\begin{multline*}
\frac{1}{Xh^2}\int_X^{2X}\Bigabs{\int_{|t-t_0|>W}D(X;1+i(t-T^*))\frac{(x+h)^{1+it}-x^{1+it}}{1+it}\,\rd t}^2\,\rd x\\
 \ll \max_{T\geq X/h}\frac{X/h}{T}\int_{W<|t-t_0|\leq T}	|D(X;1+i(t-T^*))|^2\,\rd t.
\end{multline*}
As it follows from the large sieve, for example \cite[Lemma 3.4]{MRII}, as well as noting that $\tau_k^*\leq\gamma^{1-k}\tau_k$ pointwise, that when $T\geq X$ one has
\[
\int_{W<|t-t_0|\leq T}	|D(X;1+i(t-T^*))|^2\,\rd t \ll \frac{Th^{-1/2}}{X/h},
\]
thus, it  suffices to prove that
\[
\max_{ X/h\leq T\leq X}\frac{X/h}{T}\int_{W<|t|\leq T}	\Bigabs{\sum_{n\sim X}\frac{\tau_k^*(n)\chi(n) n^{i(T^*-t_0)}}{n^{1+it}}}^2\,\rd t\ll h^{-\delta^3}. 
\]

Now we may recall the definitions of the Dirichlet polynomial $D(X;1+it)$ and the approximant $\tau_k^*$ to see that
\begin{align*}
	|D(X;1+i(t+&t_0-T^*))|\ll\Bigabs{\sum_{m\leq X^\gamma} \frac{\tau_{k-1}(m)\chi(m)m^{i(T^*-t_0)}}{m^{1+it}} \sum_{n\sim\frac{X}{m}}\frac{\chi(n)n^{i(T^*-t_0)}}{n^{1+it}}}\\
	 &\ll \sum_{i\leq\log X^\gamma/\log 2} \biggabs{\sum_{m\sim M_i} \frac{\tau_{k-1}(m)\chi(m)m^{i(T^*-t_0)}}{m^{1+it}}} \biggabs{\sum_{\frac{X}{2M_i}<n\leq \frac{2X}{M_i}} \frac{\chi(n)n^{i(T^*-t_0)}}{n^{1+it}}}.
\end{align*}
If we write $a(m)=\sum_{m_1m_2=m\atop m_1,m_2\sim M_i} \tau_{k-1}(m_1)\tau_{k-1}(m_2)\ll\tau_{3k}(m)$, Cauchy-Schwarz inequality, \cite[Lemma 3.1]{MRII} and \cite[Theorem 1]{BH} together leads us to
\begin{multline*}
\int_{|t|\leq T}	\Bigabs{\sum_{n\sim X}\frac{\tau_k^*(n)\chi(n) n^{i(T^*-t_0)}}{n^{1+it}}}^2\,\rd t\\
\ll\sum_{i\leq\frac{\log X^\gamma}{\log 2}}\biggbrac{\int_{|t|\leq T}\biggabs{\sum_{m\asymp M_i^2} \frac{a(m)\chi(m)m^{i(T^*-t_0)}}{m^{1+it}}}^2\rd t}^{1/2}  \biggbrac{\int_{|t|\leq T} \biggabs{\sum_{\frac{X}{2M_i}<n\leq \frac{2X}{M_i}} \frac{\chi(n)n^{i(T^*-t_0)}}{n^{1+it}}}^4\rd t}^{1/2}	\\
\ll\frac{T\log^AX}{X}\sum_{i\leq \log X^\gamma/\log 2} \Bigbrac{M_i^{-2}\sum_{m\asymp M_i^2}\tau_{3k}^2(m) + M_i^{-2}\sum_{0<|n|\leq M_i^2/T}\sum_{m\asymp M_i^2}\tau_{3k}(m) \tau_{3k}(m+n)}^{1/2} \\
\ll \frac{T\log^AX}{X} \Bigbrac{ \sum_{m\ll X^{2\gamma}}\frac{\tau_{3k}^2(m)}{m}+\sum_{0<|n|\leq X^{2\gamma}/T}\sum_{m\leq X^{2\gamma}}\frac{\tau_{3k}(m) \tau_{3k}(m+n)}{m}}^2\ll\frac{T\log^AX}{X},
\end{multline*}
the last inequality follows from Lemma \ref{shiu-bound}, as well as noting that the first term dominates the expression when $T\geq X/h$ and $X^{2\gamma}<X^{1/5}$.
This completes the proof of this proposition.

\end{proof}

\begin{proposition}[Difference in long intervals]\label{long-difference}
When $1\leq a\leq q\ll1$ and $X^{1-\frac{1}{100k}}\leq h_1\leq X$, we have
\[
\Bigabs{\sum_{X<n\leq X+h_1 \atop n\equiv a\mod q}(\tau_k(n)-\tau_k^*(n))}\ll h_1\log^{k-2}X.
\]
	
\end{proposition}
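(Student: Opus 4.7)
The strategy is to compute precise main-term asymptotics for
\[
S_1:=\twosum{X<n\leq X+h_1}{n\equiv a\mod q}\tau_k(n)\quad\text{and}\quad S_2:=\twosum{X<n\leq X+h_1}{n\equiv a\mod q}\tau_k^*(n)
\]
separately, and then observe that their leading $h_1\log^{k-1}X$ contributions coincide, so the difference is absorbed in the next-order error $O(h_1\log^{k-2}X)$. The key structural point is that the normalization constant $\gamma^{1-k}$ in the definition of $\tau_k^*$ is calibrated exactly so that the truncated convolution $\gamma^{1-k}(\tau_{k-1}*1)|_{m\leq X^\gamma}$ reproduces the top-order statistic of $\tau_k=\tau_{k-1}*1$. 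The hypothesis $h_1\geq X^{1-1/(100k)}$ provides plenty of room to absorb the power-saving errors coming from standard long-interval asymptotics. I may assume $\gcd(a,q)=1$; the general case reduces to this after pulling out $\gcd(a,q)$, since $q\ll 1$.

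\textbf{Analysis of $S_2$.} Unfolding the definition of $\tau_k^*$ and swapping summations,
\[
S_2=\gamma^{1-k}\sum_{m\leq X^\gamma}\tau_{k-1}(m)\cdot\#\bigset{n\in(X,X+h_1]:m\mid n,\ n\equiv a\mod q}.
\]
When $\gcd(m,q)=1$ the inner count is $h_1/(mq)+O(1)$, and otherwise it vanishes (using $\gcd(a,q)=1$). The accumulated $O(1)$-error is $\ll \gamma^{1-k}X^\gamma\log^{k-2}X$, which is negligible compared with $h_1\log^{k-2}X$ since $\gamma<1/(5k)$ and $h_1\geq X^{1-1/(100k)}$. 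The main term equals $(h_1/q)\gamma^{1-k}\sum_{m\leq X^\gamma,\,(m,q)=1}\tau_{k-1}(m)/m$. The Dirichlet series $\sum_{(m,q)=1}\tau_{k-1}(m)m^{-s}=\zeta(s)^{k-1}\prod_{p\mid q}(1-p^{-s})^{k-1}$ has a pole of order $k-1$ at $s=1$ with leading coefficient $\prod_{p\mid q}(1-1/p)^{k-1}$, so a standard Selberg--Delange (or direct partial-summation) computation yields
\[
\sum_{\substack{m\leq X^\gamma\\(m,q)=1}}\frac{\tau_{k-1}(m)}{m}=\prod_{p\mid q}\bigbrac{1-\tfrac{1}{p}}^{k-1}\frac{(\gamma\log X)^{k-1}}{(k-1)!}+O(\log^{k-2}X).
\]
After the factor $\gamma^{k-1}$ cancels $\gamma^{1-k}$, this gives $S_2=\frac{h_1}{q(k-1)!}\prod_{p\mid q}(1-1/p)^{k-1}\log^{k-1}X+O(h_1\log^{k-2}X)$.

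\textbf{Analysis of $S_1$ and conclusion.} For the genuine $k$-fold divisor function, I would invoke the classical long-interval asymptotic in arithmetic progressions: Perron's formula applied to $\frac{1}{\phi(q)}\sum_\chi\bar\chi(a)L(s,\chi)^k$ shows that only $\chi=\chi_0$ produces a main term, with $L(s,\chi_0)^k=\zeta(s)^k\prod_{p\mid q}(1-p^{-s})^k$ contributing a pole of order $k$ at $s=1$. This yields
\[
\sum_{\substack{n\leq Y\\ n\equiv a\mod q}}\tau_k(n)=\frac{Y}{\phi(q)}P_{k,q}(\log Y)+O(Y^{1-c_k})
\]
for some $c_k>0$ (for instance $c_k=1/k-\eps$ suffices), where $P_{k,q}$ is a polynomial of degree $k-1$ with leading coefficient $\prod_{p\mid q}(1-1/p)^k/(k-1)!$. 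Differencing at $Y=X+h_1$ and $Y=X$ and using $\phi(q)=q\prod_{p\mid q}(1-1/p)$ produces
\[
S_1=\frac{h_1}{q(k-1)!}\prod_{p\mid q}\bigbrac{1-\tfrac{1}{p}}^{k-1}\log^{k-1}X+O(h_1\log^{k-2}X)+O(X^{1-c_k}),
\]
and the hypothesis $h_1\geq X^{1-1/(100k)}$ absorbs the $X^{1-c_k}$ error. The leading coefficients of $S_1$ and $S_2$ agree identically, so $|S_1-S_2|\ll h_1\log^{k-2}X$, as required. The only nontrivial point is carrying out the Selberg--Delange expansion for the $(m,q)=1$ twisted sum with enough precision to isolate the leading constant cleanly; the remainder is bookkeeping, and any discrepancies in the lower-order $\log^j X$ coefficients for $j<k-1$ between $S_1$ and $S_2$ sit safely inside the admissible $O(h_1\log^{k-2}X)$ error.
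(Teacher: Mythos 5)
Your proposal is correct, and it takes a genuinely different route from the paper's. The paper first compares each of $\sum_{X<n\leq X+h_1,\,n\equiv a\,(q)}\tau_k^*(n)$ and $\sum_{X<n\leq X+h_1,\,n\equiv a\,(q)}\tau_k(n)$ with the corresponding dyadic ($n\sim X$) averages, the former by direct unfolding and the latter by citing \cite[Proposition 3.4(ii)]{MSTT}. It then handles the dyadic comparison not by computing asymptotics but by swapping the cut $m\leq X^\gamma$ for the $n$-dependent cut $m\leq n^\gamma$ (cost $O(\log^{k-2}X)$) and invoking the minorant comparison \cite[Theorem 1.2]{SA} as a black box; the whole cancellation of the $\log^{k-1}X$ main terms is thus imported rather than recomputed. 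You instead compute the leading $h_1\log^{k-1}X$ coefficients of $S_1$ and $S_2$ from scratch (Perron plus contour shift on $L(s,\chi)^k$ for $S_1$, and a partial-summation/Selberg--Delange evaluation of $\sum_{m\leq X^\gamma,\,(m,q)=1}\tau_{k-1}(m)/m$ for $S_2$) and observe that both equal $\frac{h_1}{q(k-1)!}\prod_{p\mid q}(1-1/p)^{k-1}\log^{k-1}X$. The paper's argument is shorter and works for general $a$ directly; yours is more self-contained, avoids the dependence on \cite{SA} and \cite{MSTT}, and makes the calibration of $\gamma^{1-k}$ transparent.

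One small caveat worth flagging. Your opening reduction to $\gcd(a,q)=1$ is stated as if it were immediate, but because $\tau_k$ is not completely multiplicative, ``pulling out $\gcd(a,q)$'' does not literally factor $\tau_k(dm)$; one must further split $m$ by its $d$-part (finitely many cases, since $q\ll 1$), or else run the $S_2$ unfolding with the general count $h_1\gcd(m,q)/(mq)+O(1)$ restricted to $\gcd(m,q)\mid a$, and use the classical asymptotic for $\sum_{n\leq Y,\,n\equiv a\,(q)}\tau_k(n)$ valid for arbitrary residues. The paper sidesteps this by keeping the general $a_1a_2\equiv a\,(q)$ decomposition throughout. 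This is bookkeeping rather than a gap, but the reduction step deserves a sentence of justification in a finished write-up.
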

\begin{proof}
First of all, it can be seen from the notion of $\tau_k^*$ that
\begin{align*}
h_1^{-1}\sum_{X<n\leq X+h_1 \atop n\equiv a\mod q}	\tau_k^*(n) 
&= h_1^{-1} \gamma^{1-k} \sum_{a_1,a_2\mod q\atop a_1a_2\equiv a\mod q}  \sum_{m\leq X^\gamma \atop m\equiv a_1\mod q} \tau_{k-1}(m) \sum_{X<mn\leq X+h_1 \atop n\equiv a_2\mod q}1\\
&= \frac{\gamma^{1-k}}{q} \sum_{a_1,a_2\mod q\atop a_1a_2\equiv a\mod q} \sum_{m\leq X^\gamma \atop m\equiv a_1\mod q} \frac{\tau_{k-1}(m)}{m} +O(X^{-1/3}),
\end{align*}
and similarly, we also have 
\[
X^{-1}\sum_{n \sim X \atop n\equiv a\mod q}	\tau_k^*(n)=\frac{\gamma^{1-k}}{q} \sum_{a_1,a_2\mod q\atop a_1a_2\equiv a\mod q} \sum_{m\leq X^\gamma \atop m\equiv a_1\mod q} \frac{\tau_{k-1}(m)}{m} +O(X^{-1/3}).
\]
Therefore, we can conclude that
\[
h_1^{-1}\sum_{X<n\leq X+h_1 \atop n\equiv a\mod q}	\tau_k^*(n)= X^{-1}\sum_{n \sim X \atop n\equiv a\mod q}	\tau_k^*(n)+O(X^{-1/3}).
\]

On the other hand, by introducing the minorant as in \cite[(1.11)]{SA}, one can also  find that
\begin{align*}
&X^{-1}\sum_{n \sim X \atop n\equiv a\mod q}	\gamma^{1-k}\sum_{m|n \atop m\leq n^\gamma} \tau_{k-1}(m) \\
&= X^{-1}\sum_{n \sim X \atop n\equiv a\mod q}	\gamma^{1-k}\sum_{m|n \atop m\leq X^\gamma} \tau_{k-1}(m) 	+O\Bigbrac{X^{-1}\sum_{m\sim X^\gamma}\tau_{k-1}(m)\sum_{n\sim X/m}1} \\
&= X^{-1}\sum_{n \sim X \atop n\equiv a\mod q}	\tau_k^*(n)+O(\log^{k-2}X).
\end{align*}
Therefore, the application of \cite[Theorem 1.2]{SA} shows that
\begin{align*}
X^{-1}\sum_{n \sim X \atop n\equiv a\mod q}	\tau_k^*(n)
&= X^{-1}\sum_{n \sim X \atop n\equiv a\mod q}	\gamma^{1-k}\sum_{m|n \atop m\leq n^\gamma} \tau_{k-1}(m)+O(\log^{k-2}X)\\
&=X^{-1}\sum_{n\sim X \atop n\equiv a\mod q}\tau_k(n) +O(\log^{k-2}X).	
\end{align*}
 As noted in \cite[Proposition 3.4 (ii)]{MSTT}, when $h_1\geq X^{1-\frac{1}{100k}}$, one has
  \[
 h_1^{-1}\sum_{X<n\leq X+h_1\atop n\equiv a\mod q}\tau_k(n)=X^{-1}\sum_{n\sim X \atop n\equiv a\mod q}\tau_k(n)+O(\log^{-A}X)
 \] 
  we can thus conclude this lemma by making use of the triangle inequality.

\end{proof}

\vspace{2mm}

\noindent\emph{Proof of Proposition \ref{major-arc-approx}.}

\vspace{2mm}
Assume that $h_1=X^{1-\frac{1}{100k}}$ in the proof. On combining Propositions \ref{long-short-divisor}-\ref{long-short-approx}, the triangle inequality shows that when $\chi\neq\chi_0$ or $|T^*|\geq X$ for all but $O(X(h^{-\delta^3/50}+X^{-\frac{\delta^6}{10^{16}}}))$ integers $x\in(X,2X]$ one has
\[
h^{-1}\bigabs{\sum_{x<n\leq x+h \atop n\in S}\tau_k(n)\chi(n)n^{iT^*} - \sum_{x<n\leq x+h }\tau_k^*(n)\chi(n)n^{iT^*}}\ll\delta\log^{k-1}X;
\]
otherwise when $\chi=\chi_0$ and $|T^*|\leq X$ one has
\begin{align*}
	&h^{-1}\bigabs{\sum_{x<n\leq x+h \atop n\in S}\tau_k(n)\chi_0(n)n^{iT^*} - \sum_{x<n\leq x+h }\tau_k^*(n)\chi_0(n)n^{iT^*}} \\
		&\ll\delta\log^{k-1}X + h_1^{-1}\Bigabs{\sum_{x<n\leq x+h_1}\tau_k(n)\chi_0(n)-\sum_{x<n\leq x+h_1}\tau_k^*(n)\chi_0(n)} . 
\end{align*}
	Observing that $\chi(n)=\chi(a)$ if $n\equiv a\mod q$, one may decompose $n$ into arithmetic progressions with common difference $q$ and then it follows from Proposition \ref{long-difference}, as well as the assumption $q\ll1$, that  
	\begin{align*}
	&h^{-1}\bigabs{\sum_{x<n\leq x+h \atop n\in S}\tau_k(n)\chi_0(n) - \sum_{x<n\leq x+h }\tau_k^*(n)\chi_0(n)}\\
	&\ll h_1^{-1}\sum_{a\mod q}\Bigabs{\twosum{x<n\leq x+h_1}{n\equiv a\mod q}(\tau_k(n)-\tau_k^*(n))}+\delta\log^{k-1}X\ll \delta\log^{k-1}X.
	\end{align*}
	Proposition \ref{major-arc-approx} then follows from the orthogonality of the Dirichlet characters. To be precise,
	\begin{align*}
		&\Bigabs{\twosum{x<n\leq x+h}{n\equiv a\mod q} (1_S(n)\tau_k(n)-\tau_k^*(n))n^{iT^*}} \\
		&\leq \frac{1}{\phi(q)}\sum_{\chi\mod q} \Bigabs{\sum_{x<n\leq x+h }(1_S(n)\tau_k(n)-\tau_k^*(n))\chi(n)n^{iT^*} }
		\ll \delta h\log^{k-1}X
	\end{align*}
for all but $O(X(h^{-\delta^3/50}+X^{-\frac{\delta^6}{10^{16}}}))$ integers $x\in(X,2X]$.

\qed

\section{Reduction and discretization}

Suppose that $0<\omega,\eta<1$ and $\omega$ is sufficiently small with respect to $\eta$. Suppose that $Q=H^\omega$ and $Q_0=Q^\omega$, and let $\pq(n)$ denote the number of prime factors of $n$ in the interval $(Q_0, Q]$. Additionally, as $\omega$ is sufficiently small concerning to $\eta$ and recalling the setting of the set $S$ in (\ref{s-3}), we can assume that for all $1\leq i\leq 4$ we have $[Q_0,Q]\cap[P_i,Q_i]=\emptyset$.  To simplify the description of well-spaced intervals, let's introduce the notion of configurations, as defined in \cite[Definition 3.1]{Wal}.

\begin{definition}[Configuration]\label{configuration}
We say $\mathcal J$ is a \textit{$(c,H)$-configuration in $[X,2X]\times\T^d$} if $\#\mathcal J\geq cX/H$ and the first coordinates  are $H$-separated.	
\end{definition}

The objective of this section is to demonstrate that when the integral is large, it is possible to identify numerous well-spaced short intervals wherein the correlation holds among these intervals. Additionally,  we can observe from this section that almost all integers in these short intervals can extract a small prime factor.  This idea also serves as the foundation for the papers \cite{MRT20,MRTTZ,Wal,Wal23-1,Wal23-2}.

\begin{lemma}[Discretization]\label{discretization}
Suppose that $\log^{2k\log k}X\leq H\leq X^{1/2-\eps}$ and $S\subseteq[X,2X]$ is as in (\ref{s-3}) with $0<\delta\leq\eta^2$. Suppose that
\[
\int_X^{2X}\sup_{\vec\alpha\in\T^d} \sup_{|x_0-x|\leq H}\Bigabs{\sum_{x<n\leq x+H} (\tau_k(n)-\tau^*_k(n)) e\bigbrac{\sum_{1\leq j\leq d}\alpha^{(j)}(n-x_0)^j}}\,\rd x\geq\eta XH \log^{k-1}X,
\]
then there is a $(c\eta,H)$-configuration $\mathcal J_{-1}\subseteq[X,2X]\times\T^d$ such that the following three events happen simultaneously.
\begin{enumerate}
\item When $(x,\vec\alpha_x)\in\mathcal J_{-1}$ we have
\[
\sum_{x<n\leq x+H}(\tau_k(n)+\tau_k^*(n))\ll H\log^{k-1}X;
\]
\item when $(x,\vec\alpha_x)\in\mathcal J_{-1}$ there is some $x_0$  satisfying $|x_0-x|\leq H$ and such that

\[
\bigabs{\sum_{x<n\leq x+H}(1_S(n)\tau_k(n)-\tau_k^*(n))e\bigbrac{\sum_{1\leq j\leq d}\alpha_x^{(j)}(n-x_0)^j}}\gg\eta H\log^{k-1}X;
\]
\item suppose that $0\leq h_1,h_2\leq H^{1-\sigma}$ are integers, where  $0\leq\sigma<1/10$ is a constant and, for each $x$  we fix four integers $z_{1,x}^{(1)},z_{2,x}^{(1)},z_{1,x}^{(2)},z_{2,x}^{(2)}$ with $0\leq H^\eps z_{j,x}^{(i)}-x\leq H$, then  when $(x,\vec\alpha_x) \in\mathcal J_{-1}$  we have
\[
\sum_{|n-z_{j,x}^{(i)}|\leq h_i}\tau_k(n)\ll h_i\log^{k-1}X\quad\text{ for all } i,j\in\set{1,2}. 
\]
\end{enumerate}

\end{lemma}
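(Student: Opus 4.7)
The plan is to refine the set $[X,2X]$ by discarding successive exceptional sets of small size, and then extract an $H$-separated subfamily by pigeonholing. To enforce condition (1), I apply Corollary \ref{exceptional-set} with $\delta$ a small multiple of $\eta$: this furnishes $\mathcal E_1\subseteq[X,2X]$ of cardinality $\ll\eta X$ outside of which the pointwise bound on $\tau_k+\tau_k^*$ holds, while the contribution of $\mathcal E_1$ to the assumed integral is $\ll\eta XH\log^{k-1}X$ by part (i) of the corollary. Choosing the implicit constants suitably, the integral restricted to $[X,2X]\setminus\mathcal E_1$ is still $\gg\eta XH\log^{k-1}X$.

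Next I replace $\tau_k$ by $1_S\tau_k$ inside the correlation; since $1-1_S(n)\leq\sum_{i=1}^{4}1_{(n,\mathcal P(P_i,Q_i))=1}$, Shiu's bound (Lemma \ref{shiu-bound}) together with Mertens' theorem gives $\sum_{x<n\leq x+H}\tau_k(n)(1-1_S(n))\ll\delta' H\log^{k-1}X$ for any $\delta'\ll\eta$, provided the ratios $\log Q_i/\log P_i$ are large enough (which they are for $\delta$ a sufficiently small power of $\eta$). Hence the discrepancy between the two correlations is absorbable into the $\eta$-level. For each $x\in[X,2X]\setminus\mathcal E_1$, choose $\vec\alpha_x\in\T^d$ and $x_0(x)$ with $|x_0(x)-x|\leq H$ achieving the supremum up to a factor two. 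Combining the pointwise upper bound from (1) with the lower bound on the integral, the set $\mathcal G$ of $x$ for which this supremum is $\geq c\eta H\log^{k-1}X$ has cardinality $\gg\eta X$. Partitioning $[X,2X]$ into $\asymp X/H$ intervals of length $H$ and picking at most one representative of $\mathcal G$ per interval yields an $H$-separated family of size $\gg\eta X/H$ on which both (1) and (2) hold.

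Finally, the integers $z^{(i)}_{j,x}$ are prescribed by the later application as functions of $x$ whose values lie in the window $[x/H^\eps,(x+H)/H^\eps]$ of width $\asymp H^{1-\eps}$; as $x$ ranges over $[X,2X]$ each integer $z$ in $[X/H^\eps,2X/H^\eps]$ is hit at most $O(H^\eps)$ times by each of these functions. Applying Corollary \ref{exceptional-set} (or Lemma \ref{shiu-bound} when $h_i$ is in Shiu's admissible range) at scale $h_i$ on the rescaled range, the set of $z$ violating $\sum_{|n-z|\leq h_i}\tau_k(n)\ll h_i\log^{k-1}X$ has density $O(\eta)$; pulling back through the at-most-$H^\eps$-to-one correspondence and taking the union over the four pairs $(i,j)$, we discard at most $O(\eta X)$ further values of $x$. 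The remaining set still contains an $H$-separated family of size $\gg\eta X/H$, which we take as $\mathcal J_{-1}$.

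The main obstacle is condition (3), for two reasons. First, one must read the quantification on the $z^{(i)}_{j,x}$ carefully: the bound cannot be required uniformly for \emph{all} integers $z$ in the window, since a naive union bound would cost a factor $H^{1-\eps}$ that overwhelms the $\eta X$ budget; rather the four $z$'s must be specified as functions of $x$ whose fibres have size $O(H^\eps)$, which is why the pullback estimate is affordable. Second, for $h_i$ below the threshold $\log^{k\log k}X$ of Lemma \ref{local-average} and simultaneously below $X^\eps$ (where Shiu also fails), one must supply a direct argument exploiting the fact that only $O(h_i+1)$ divisor values contribute, relying on moment bounds for $\tau_k$ on very short intervals; this is the delicate step that the discretization has to absorb.
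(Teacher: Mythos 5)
Your proposal follows the paper's route: apply Corollary~\ref{exceptional-set} to enforce the pointwise bound (1), restrict $\tau_k$ to the set $S$ via a Shiu-type estimate, pigeonhole the hypothesis integral against the pointwise bound to produce a dense set $\mathcal G$ on which (2) holds, extract an $H$-separated subfamily, and treat (3) by a separate exceptional-set argument at the rescaled level. Your reading of (3) --- that the $z_{j,x}^{(i)}$ must be functions of $x$ with small fibres, because a union bound over the full $H^{1-\eps}$-width window would be fatal --- is the correct interpretation, and the paper's write-up is indeed terse on this point; in the paper's applications the $z$'s arise from $\mathcal J$-elements that are already $H$-separated, so the pull-back only costs a bounded factor. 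Likewise, your remark that the stated range $0\leq h_i\leq H^{1-\sigma}$ allows $h_i$ below the reach of Lemma~\ref{local-average} is a genuine imprecision of the statement, though it is vacuous in the uses (there $h_i\geq H^{1-O(\sigma)}$, which far exceeds $\log^{k\log k}X$).

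One slip worth flagging: you assert the \emph{pointwise} estimate
\[
\sum_{x<n\leq x+H}\tau_k(n)\bigl(1-1_S(n)\bigr)\ll\delta'H\log^{k-1}X
\]
directly from Lemma~\ref{shiu-bound}. That lemma requires the interval length to be at least $X^{\eps}$, whereas here $H$ can be as small as $\log^{2k\log k}X$, so Shiu's theorem does not apply to the short interval. The paper instead bounds the \emph{averaged} quantity
\[
\sum_{x\sim X}\ \sum_{x<n\leq x+H}1_{(n,\mathcal P(P_i,Q_i))=1}\,\tau_k(n)\ \ll\ H\sum_{n\sim X}1_{(n,\mathcal P(P_i,Q_i))=1}\,\tau_k(n)+H^2X^{\eps}\ \ll\ \eta^k XH\log^{k-1}X,
\]
applying Shiu at the full scale $X$ and then exchanging the order of summation. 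Since your subsequent step already pigeonholes the integral against the pointwise bound furnished by (1), this averaged estimate suffices and your conclusion is unaffected, but as written the pointwise claim is not available for the smallest $H$ in the theorem's range.
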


\begin{proof}
We may  apply Corollary \ref{exceptional-set} with $H=2h_i(i=1,2)$ and $\delta=\eta^2$ respectively to conclude that there is a set $\mathcal E_1\subseteq[X,2X]$ with $\#\mathcal E_1\ll\eta^2 X$ and when $z_{j,x}^{(i)}-h_i  \not\in\mathcal E_1$ for $i,j\in\set{1,2}$ we have
\[
\sum_{|n-z_{j,x}^{(i)}|\leq h_i}\tau_k(n)\ll h_i\log^{k-1}X. 
\]
Thus, if we restrict integers $x\in[X,2X]$ such that the corresponding $z_{j,x}^{(i)}-h_i$  being outside of $\mathcal E_1$ the third conclusion always holds. Besides, the application of Corollary \ref{exceptional-set} with $\delta=\eta^2$ also shows that there is an exceptional set $\mathcal E_2\subseteq[X,2X]$ with $\#\mathcal E_2\ll\eta^2 X$ such that the contribution from $\mathcal E_2$ is negligible in the sense of 
\[
\sum_{x\in\mathcal E_2}\sum_{x<n\leq x+H}(\tau_k(n)+\tau_k^*(n))\ll\eta^2XH\log^{k-1}X,
\]
 and when $x\in[X,2X]\backslash\mathcal E_2$ the first conclusion holds, that is
\[
\sum_{x<n\leq x+H}(\tau_k(n)+\tau_k^*(n))\ll H\log^{k-1}X.
\]
We can further restrict the consideration of the function $\tau_k$ to the set $S$ (defined in (\ref{s-3})) by noting that when $1\leq H\leq X^{1/2-\eps}$, for any $1\leq i\leq 4$,  Lemma \ref{shiu-bound} reveals that
\begin{multline*}
\sum_{x\sim X}\sum_{x<n\leq x+H}1_{(n,\mathcal P(P_i,Q_i))=1}\tau_k(n)
\ll H^2X^\eps+H\frac{X}{\log X}	\exp\bigbrac{\sum_{p\leq P_i}\frac{k}{p}+\sum_{Q_i<p\leq 2X}\frac{k}{p}}\\
\ll \bigbrac{\frac{\log P_i}{\log Q_i}}^kXH\log^{k-1}X\ll\eta^kXH\log^{k-1}X.
\end{multline*}
Meanwhile, we also observe that
\[
\sum_{x\sim X}\sum_{x<n\leq x+H \atop \Omega(n)\geq (1+\eps)k\log\log X} \tau_k(n) \leq \sum_{X<n\leq 3X \atop \Omega(n)\geq (1+\eps)k\log\log X} \tau_k(n) \sum_{n-H\leq x<n}1
\ll H \sum_{X<n\leq 3X \atop \Omega(n)\geq (1+\eps)k\log\log X} \tau_k(n).
\]
Setting $\tilde \eps =\frac{\log \eta^{-k}}{(k\eps /2)\log\log X}$, i.e. $\eta^{-k}=(\log X)^{k\eps\tilde\eps/2}$, as noting $\tilde \eps>0$, one can deduce from  Rankin's trick and    Shiu's bound  (Lemma \ref{shiu-bound})  that
\begin{align*}
\sum_{X<n\leq 3X}\tau_k(n) 1_{\Omega(n)\geq (1+\eps)k\log\log X} &\leq (1+\tilde\eps )^{-(1+\eps)k\log\log X} \sum_{X<n\leq 3X}\tau_k(n)(1+\tilde\eps )^{\Omega(n)}	\\
& \ll X (\log X)^{-(1+\eps )k\log(1+\tilde\eps )} \prod_{p\leq 3X}\bigbrac{1+ \frac{(1+\tilde\eps )k-1}{p}}\\
&\ll X \log^{k-1}X (\log X)^{\tilde\eps k-(1+\eps )k\log(1+\tilde\eps )},
\end{align*}
which is bounded by $O(\eta^k X \log^{k-1}X)$ from the assumption of $\tilde\eps$. Combining the above three estimates one thus has
\[
\sum_{x\sim X}\sum_{x<n\leq x+H} \tau_k(n)  1_{n\not\in S}\ll \eta^k XH\log^{k-1}X
\]
 
On the other hand, it is directly from the assumption that for every $x\in[X,2X]$ there exists a frequency $\vec\alpha_x=(\alpha_x^{(d)},\dots,\alpha_x^{(1)})\in\T^d$ and a number $|x_0-x|\leq H$ such that
\[
\sum_{x\sim X}\Bigabs{ \sum_{x<n\leq x+H}(\tau_k(n)-\tau_k^*(n))e\bigbrac{\sum_{1\leq j\leq d}\alpha_x^{(j)}(n-x_0)^j} } \gg \eta XH\log^{k-1}X.
\]
Therefore, by combining the above two hands together we can conclude that
\begin{multline*}
	\sum_{x\in [X,2X]\backslash\mathcal E_2}\Bigabs{ \sum_{x<n\leq x+H}( 1_S(n) \tau_k(n)-\tau_k^*(n))e\bigbrac{\sum_{1\leq j\leq d}\alpha_x^{(j)}(n-x_0)^j} }\\
	\geq \sum_{x\in(X,2X]} \Bigabs{ \sum_{x<n\leq x+H}(\tau_k(n)-\tau_k^*(n))e\bigbrac{\sum_{1\leq j\leq d}\alpha_x^{(j)}(n-x_0)^j} }\\
	-\sum_{n\in\mathcal E_2}(\tau_k(n)+\tau_k^*(n))-\sum_{x\sim X}\sum_{x<n\leq x+H} \tau_k(n) 1_{n\not\in S}\gg \eta XH\log^{k-1}X.
\end{multline*}

Noting that the pointwise upper bound when $x\not\in\mathcal E_2$, one can deduce from  the pigeonhole principle that there is a set $\mathcal A\subseteq [X,2X]\backslash\mathcal E_2$ with cardinality $\#\mathcal A\gg\eta X$ such that when $x\in\mathcal A$ we have
\[
\Bigabs{ \sum_{x<n\leq x+H}(1_S(n)\tau_k(n)-\tau_k^*(n))e\bigbrac{\sum_{1\leq j\leq d}\alpha_x^{(j)}(n-x_0)^j} }\gg\eta H\log^{k-1}X.
\]
One may remove elements in $\mathcal A$ such that the corresponding $z_{j,x}^{(i)}-h_i$  ($i,j\in\set{1,2})$  being in the set $\mathcal E_1$ and rename the set of the remaining elements to  $\mathcal A$, then we still have $\#\mathcal A\gg\eta X$.
Now we may decompose the elements of $\mathcal A$ into $H$-separated subsets, for example, assume that $0\leq i<H$ and denote $\mathcal A_i=\set{x\in \mathcal A: x\equiv i\mod H}$. The pigeonhole principle implies that there is some $\mathcal A_i$ satisfies $\#\mathcal A_i\gg\eta X/H$. Taking  $\mathcal J_{-1}=\set{(x,\vec\alpha_x):x\in\mathcal A_i}$, then $\mathcal J_{-1}\subseteq[X,2X]\times\T^d$, and according to Definition \ref{configuration}, $\mathcal J_{-1}$ is a $(c\eta,H)$-configuration. 
\end{proof}

\begin{lemma}[Extracting a small prime factor] \label{4-1}
Suppose that $\exp(\log^\eps X)\leq H\leq X^{1/2-\eps}$ and
\[
\int_X^{2X}\sup_{\vec\alpha\in\T^d} \sup_{|x_0-x|\leq H}\Bigabs{\sum_{x<n\leq x+H} (\tau_k(n)-\tau^*_k(n)) e\bigbrac{\sum_{1\leq j\leq d}\alpha^{(j)}(n-x_0)^j}}\,\rd x\geq\eta XH \log^{k-1}X.
\]
Then there is a $(c\eta^2,H)$-configuration $\mathcal J\subseteq[X,2X]\times\T^d$ and a number $P\in[Q_0,Q/2]$ such that the following statement holds.

For each $(x,\vec\alpha_x)\in \mathcal J$ there is some number $x_0$ with $|x_0-x|\leq H$ such that
\[
\Bigabs{\sum_{\frac{x}{p}<n\leq\frac{x+H}{p}}\frac{1_S(n)\tau_k(n)-t_n\tau^*_k(n)}{\pq(n)+1} e\bigbrac{ \sum_{1\leq j\leq d}p^j\alpha_x^{(j)} (n-x_0/p)^j}}\gg\eta^4\frac{H\log^{k-1}X}{p}
\]
 holds for $\gg\eta^{6}P/\log P$  primes $p\in[P,2P]$, here, $0<t_n<1$ for each integer $n\in[X,2X]$ and $S\subseteq[X,2X]$ is the set defined in (\ref{s-3}).

\end{lemma}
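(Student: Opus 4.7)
The plan is to first discretize via Lemma~\ref{discretization}, then apply Ramar\'e's identity to insert a small prime factor $p\in(Q_0,Q]$, change variables $n=pm$ in the resulting inner sums, and finally pigeonhole on the dyadic scale of $p$ to produce the configuration $\mathcal J$ and scale $P$ stated in the lemma.

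First I would apply Lemma~\ref{discretization} with $\delta=\eta^2$ to obtain a $(c\eta,H)$-configuration $\mathcal J_{-1}$, together with, for each $(x,\vec\alpha_x)\in\mathcal J_{-1}$, a shift $x_0$ with $|x_0-x|\leq H$ for which
\[
\Bigabs{\sum_{x<n\leq x+H}(1_S(n)\tau_k(n)-\tau_k^*(n))e\bigbrac{\sum_{j=1}^d\alpha_x^{(j)}(n-x_0)^j}}\gg\eta H\log^{k-1}X,
\]
along with the pointwise bound $\sum_{x<n\leq x+H}(\tau_k(n)+\tau_k^*(n))\ll H\log^{k-1}X$.

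Next I would use Ramar\'e's identity $1=\sum_{p\mid n,\,p\in(Q_0,Q]}\tfrac{1}{\pq(n)+1}+\tfrac{1}{\pq(n)+1}$, valid for every $n\in\N$. The remainder is bounded via the representation $(\pq(n)+1)^{-1}=\int_0^1 t^{\pq(n)}\,\rd t$ and Shiu's bound (Lemma~\ref{shiu-bound}) applied to the multiplicative function $\tau_k(n)t^{\pq(n)}$, which yields an upper bound of order $H\log^{k-1}X/\log(1/\omega)$ and is absorbed into the main term once $\omega$ is taken sufficiently small in terms of $\eta$. In each remaining $p$-summand I would substitute $n=pm$. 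Because $(Q_0,Q]\cap(P_i,Q_i]=\emptyset$, the factor $1_S(pm)$ depends only on $m$; moreover $\tau_k(pm)=k\tau_k(m)$ for $p\nmid m$, while $\tau_k^*(pm)=k\tau_k^*(m)\bigbrac{1+O(\log p/\log X^\gamma)}$ by a direct expansion of its definition, with the diagonal $p\mid m$ absorbed by Shiu. This rewrites the inner sum as $k$ times the transformed sum appearing in the statement, and the discrepancy $\pq(pm)+1=\pq(m)+2$ together with the lower-order $\tau_k^*$ correction is absorbed into a coefficient $t_n\in(0,1)$.

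Writing $B_p(x,\vec\alpha_x)$ for the absolute value of this transformed sum, the above should yield $\sum_{p\in(Q_0,Q]}B_p(x,\vec\alpha_x)\gg\eta H\log^{k-1}X$ for every $(x,\vec\alpha_x)\in\mathcal J_{-1}$. A dyadic pigeonhole in $p$, followed by a further pigeonhole across the configuration to fix a common $P$, then furnishes a subconfiguration $\mathcal J\subseteq\mathcal J_{-1}$ with $\#\mathcal J\gg c\eta^2 X/H$ and a scale $P\in[Q_0,Q/2]$ satisfying $\sum_{p\in[P,2P]}B_p(x,\vec\alpha_x)\gg\eta^2 H\log^{k-1}X$ for every $(x,\vec\alpha_x)\in\mathcal J$. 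Combining with the Shiu-type uniform upper bound $B_p(x,\vec\alpha_x)\ll H\log^{k-1}X/p$ and splitting $[P,2P]$ into good and bad primes at the threshold $\eta^4 H\log^{k-1}X/p$ should then show that at least $\gg\eta^6 P/\log P$ primes satisfy the required pointwise lower bound. I expect the hard part to be the combinatorial bookkeeping needed to verify $t_n\in(0,1)$, together with the parameter calibration: $\omega$ must be small enough in terms of $\eta$ for the Ramar\'e remainder and the lower-order $\tau_k^*$ correction to be absorbed, yet large enough that the pigeonhole losses $\log(Q/Q_0)$ and $\log(1/\omega)$ do not erode the density of $\mathcal J$ or the number of good primes below the stated thresholds.
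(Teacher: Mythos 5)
Your overall strategy --- discretize via Lemma~\ref{discretization}, insert a small prime factor via Ramar\'e's identity, change variables $n=pm$, and pigeonhole over dyadic scales of $p$ to locate $P$ --- is the same as the paper's. However, two of the intermediate steps do not go through as written.

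\textbf{Pointwise Shiu is unavailable.} You propose to bound the Ramar\'e remainder by writing $(\pq(n)+1)^{-1}=\int_0^1 t^{\pq(n)}\,\rd t$ and applying Lemma~\ref{shiu-bound} to $\tau_k(n)t^{\pq(n)}$ on the short interval $(x,x+H]$, and likewise to absorb the diagonal $p\mid m$ case ``by Shiu''. But Lemma~\ref{shiu-bound} requires $H\geq X^\eps$, whereas here $H$ may be as small as $\exp(\log^\eps X)=X^{(\log X)^{\eps-1}}$, which is subpolynomial in $X$; and the intervals appearing after the change of variables have length $\asymp H/p$, which is even shorter. Thus none of these pointwise applications of Shiu are justified. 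The paper handles both contributions \emph{on average over $x\in(X,2X]$}: Shiu is applied once over the full dyadic interval $[X,2X]$ to get $\sum_{n\sim X}1_{(n,\mathcal P(Q_0,Q))=1}\tau_k(n)\ll(\log Q_0/\log Q)^kX\log^{k-1}X$, and the non-coprime contribution is bounded, summed over $H$-separated $x$, by $\ll Q_0^{-1}X\log^{3k}X$. Consequently, the assertion that $\sum_{p\in(Q_0,Q]}B_p(x,\vec\alpha_x)\gg\eta H\log^{k-1}X$ holds for \emph{every} $(x,\vec\alpha_x)\in\mathcal J_{-1}$ is not established; one obtains only the corresponding average over $\mathcal J_{-1}$, which combined with the pointwise upper bound of Lemma~\ref{discretization}(1) pigeonholes down to a subconfiguration $\mathcal J$ of density $\gg\eta^2 X/H$ where the pointwise lower bound $\sum_p B_p\gg\eta^2 H\log^{k-1}X$ holds --- this is where the passage from $\eta$ to $\eta^2$ comes from.

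\textbf{The Ramar\'e identity you use gives the wrong denominator.} From $1=\sum_{p\mid n,\,p\in(Q_0,Q]}\frac{1}{\pq(n)+1}+\frac{1}{\pq(n)+1}$, substituting $n=pm$ with $(p,m)=1$ produces $\pq(pm)+1=\pq(m)+2$, not $\pq(m)+1$ as required by the statement. The ratio $\frac{\pq(m)+1}{\pq(m)+2}$ depends on $m$ and multiplies the \emph{entire} numerator, so it cannot be absorbed into the coefficient $t_n$, which in the statement multiplies only $\tau_k^*(n)$ (indeed, trying to do so gives a ``$t_n$'' involving $\tau_k(m)/\tau_k^*(m)$ which need not lie in $(0,1)$). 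The paper instead uses $1_{(n,\mathcal P(Q_0,Q))>1}=\sum_{Q_0<p\leq Q}\sum_{pm=n}\frac{1}{\pq(pm)}$, which produces exactly $\pq(m)+1$ after the substitution, and defines $t_n$ by the \emph{exact} identity $\tau_k^*(pn)=kt_n\tau_k^*(n)$ rather than by a $(1+O(\log p/\log X^\gamma))$ approximation.
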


\begin{proof}
Building on the second conclusion of Lemma \ref{discretization}  there is a $(c\eta,H)$-configuration $\mathcal J_{-1}\subseteq[X,2X]\times\T^d$ for which when $(x,\vec\alpha_x)\in\mathcal J_{-1}$ there is some $x_0$ with $|x_0-x|\leq H$ such that
\[
\bigabs{\sum_{x<n\leq x+H}(1_S(n)\tau_k(n)-\tau_k^*(n))e\bigbrac{\sum_{1\leq j\leq d}\alpha_x^{(j)}(n-x_0)^j}}\gg\eta H\log^{k-1}X.
\]
Recalling Definition \ref{configuration}, we may sum over $(x,\vec\alpha_x)\in\mathcal J_{-1}$ to conclude that
\[
\eta^2 X\log^{k-1}X\ll\sum_{ (x,\vec\alpha_x)\in\mathcal J_{-1} } \Bigabs{ \sum_{x<n\leq x+H}(1_S(n)\tau_k(n)-\tau_k^*(n)) e\bigbrac{\sum_{1\leq j\leq d}\alpha_x^{(j)}(n-x_0)^j} }.
\]
Based on whether or not $n$ has a prime factor in the interval $(Q_0, Q]$, the right-hand side summation is directly bounded by the sum of the following two expressions:
\[
\sum_{ (x,\vec\alpha_x)\in\mathcal J_{-1} } \Bigabs{ \sum_{x<n\leq x+H} 1_{(n,\mathcal P(Q_0,Q))>1}(1_S(n)\tau_k(n)-\tau^*_k(n)) e\bigbrac{\sum_{1\leq j\leq d}\alpha_x^{(j)}(n-x_0)^j}}
\]
and 
\[
\twosum{x\sim X}{x:H\text{-separated}} \sum_{x<n\leq x+H}\bigbrac{1_{(n,\mathcal P(Q_0,Q))=1} \tau_k(n)+1_{(n,\mathcal P(Q_0,Q))=1}\tau^*_k(n)} \ll\sum_{n\sim X}1_{(n,\mathcal P(Q_0,Q))=1} \tau_k(n),
\]
by noting that $\tau_k^*\ll\tau_k$ pointwise. We are going to show that the second expression is negligible based on our chosen parameters $Q_0$ and $Q$. In practice, Lemma \ref{shiu-bound} implies that
\[
\sum_{n\sim X}1_{(n,\mathcal P(Q_0,Q))=1} \tau_k(n)\ll\frac{X}{\log X} \exp\Bigbrac{\sum_{p\leq Q_0}\frac{k}{p}+\sum_{Q\leq p\leq 2X}\frac{k}{p}}\ll X\log^{k-1}X\bigbrac{\frac{\log Q_0}{\log Q}}^k.
\] 
Recalling that $Q_0=Q^\omega$, this term is thereby bounded by $O(\eta^3X\log^{k-1}X)$ as $\omega$ is sufficiently small concerning $\eta$, which is negligible. 

Therefore, we may assume that
\[
\eta^2 X\log^{k-1}X\ll \sum_{ (x,\vec\alpha_x)\in\mathcal J_{-1} } \Bigabs{ \sum_{x<n\leq x+H} 1_{(n,\mathcal P(Q_0,Q))>1}(1_S(n)\tau_k(n)-\tau^*_k(n)) e\bigbrac{\sum_{1\leq j\leq d}\alpha_x^{(j)}(n-x_0)^j}}.
\]
The application of Ramar\'e's identity $1_{(n,\mathcal P(Q_0,Q))>1}=\sum_{Q_0<p\leq Q}\sum_{pm=n}\frac{1}{\pq(pm)}$ yields that
\[
\eta^2 X\log^{k-1}X\ll \sum_{ (x,\vec\alpha_x)\in\mathcal J_{-1} } \sum_{Q_0<p\leq Q}\Bigabs{\sum_{\frac{x}{p}<n\leq\frac{x+H}{p}}\frac{1_S(pn)\tau_k(pn)-\tau^*_k(pn)}{\pq(pn)} e\bigbrac{\sum_{1\leq j\leq d}p^j\alpha_x^{(j)}(n-\frac{x_0}{p})^j}}.
\]
Noting that the contribution of integers $n$ in the range $(\frac{x}{p},\frac{x+H}{p}]$ which are not coprime with $p$ can be estimated by
\[
\twosum{x\sim X}{x:H\text{-separated}} \sum_{Q_0<p\leq Q}\sum_{x<p^2m\leq x+H}\tau_k(p^2n)\ll X\log^{3k}X\sum_{Q_0<p\leq Q}p^{-2}\ll Q_0^{-1}X\log^{3k}X,
\]
which is negligible.
Taking note that $1_S(pn)=1_S(n)$ when $Q_0<p\leq Q$, along with $\pq(pn)=\pq(n)+1$, $\tau_k(pn)=k\tau_k(n)$ and 
\[\tau_k^*(pn)=\gamma^{1-k}\tau_{k-1}(p)\sum_{m|n \atop m\leq (X/p)^\gamma}\tau_{k-1}(m)+\gamma^{1-k}\sum_{m|n\atop m\leq X^\gamma}\tau_{k-1}(m)
\]
 when $(p,n)=1$ and $p\leq Q< X^\gamma$, therefore, by assuming that $0<t_n<1$ is the number such that $\tau_k^*(pn)=kt_n\tau_k^*(n)$, we can conclude that
\[
\eta^2 X\log^{k-1}X\ll\sum_{ (x,\vec\alpha_x)\in\mathcal J_{-1} }  \sum_{Q_0<p\leq Q} \Bigabs{\sum_{\frac{x}{p}<n\leq\frac{x+H}{p}}\frac{1_S(n)\tau_k(n)-t_n\tau^*_k(n)}{\pq(n)+1}e\bigbrac{\sum_{1\leq j\leq d}p^j\alpha_x^{(j)}(n-\frac{x_0}{p})^j} }.
\]
On the other hand, taking note that when $(x,\vec\alpha_x) \in\mathcal J_{-1}$, it can be deduced from Ramar\'e's identity and the first conclusion of Lemma \ref{discretization} that 
\begin{multline*}
\sum_{Q_0<p\leq Q}\sum_{\frac{x}{p}<n\leq\frac{x+H}{p}}\frac{\tau_k(n)+\tau^*_k(n)}{\pq(n)+1}\ll\sum_{x<m\leq x+H}\sum_{Q_0<p\leq Q}\sum_{m=pn}\frac{\tau_k(m)+\tau^*_k(m)}{\pq(n)+1}\\
\ll	\sum_{x<m\leq x+H}(\tau_k(m)+\tau^*_k(m))\ll H\log^{k-1}X.
\end{multline*}
Combining the above two estimates together yields a set $\mathcal J\subset\mathcal J_{-1}$ with $\#\mathcal J\gg\eta^2X/H$ such that when $(x,\vec\alpha_x)\in\mathcal J$ we have
\[
\eta^2 H\log^{k-1}X\ll\sum_{Q_0<p\leq Q} \Bigabs{\sum_{\frac{x}{p}<n\leq\frac{x+H}{p}}\frac{1_S(n)\tau_k(n)-t_n\tau^*_k(n)}{\pq(n)+1}e\bigbrac{\sum_{1\leq j\leq d}p^j\alpha_x^{(j)}(n-\frac{x_0}{p})^j} }.
\]
Besides, it is obvious from Definition \ref{configuration} that this $\mathcal J\subseteq[X,2X] \times \T^d$ is a $(c\eta^2, H)$-configuration.

We now claim that there is a subset of primes $\mathcal P\subseteq(Q_0,Q]$ satisfying $\sum_{p\in\mathcal P}\frac{1}{p}\gg\eta^6\sum_{Q_0<p\leq Q}\frac{1}{p}$ and  such that for each $p\in\mathcal P$ the following estimate holds uniformly for  $(x,\vec\alpha_x)\in\mathcal J$ 
\begin{align}\label{bias-p}
\Bigabs{\sum_{\frac{x}{p}<n\leq\frac{x+H}{p}}\frac{1_S(n)\tau_k(n)-t_n\tau^*_k(n)}{\pq(n)+1} e\bigbrac{\sum_{1\leq j\leq d}p^j\alpha_x^{(j)}(n-\frac{x_0}{p})^j} }\gg\eta^4 \frac{H\log^{k-1}X}{p}.
\end{align}
It then follows from the pigeonhole principle that, if we decompose the interval $(Q_0,Q]$ into dyadic intervals $(P_j,2P_j]$ with $P_j=2^jQ_0$, there must be some $P_j$ such that $\sum_{p\in\mathcal P\cap(P_i,2P_i]}\frac{1}{p}\gg\eta^6\sum_{p\sim P_i}\frac{1}{p}$. We can thus deduce from prime number theorem that there are $\gg\eta^6\frac{P_i}{\log P_i}$  primes $p\in (P_i,2P_i]$ such that (\ref{bias-p}) holds. The lemma then follows by taking $P$ as this $P_j$. 

It remains to prove the claim (\ref{bias-p}). It is easy to see that when  $(x,\vec\alpha_x)\in\mathcal J$ one has
\begin{multline*}
 \sum_{p\in\mathcal P}\Bigabs{\sum_{\frac{x}{p}<n\leq\frac{x+H}{p}}\frac{1_S(n)\tau_k(n)-t_n\tau^*_k(n)}{\pq(n)+1} e\bigbrac{\sum_{1\leq j\leq d}p^j\alpha_x^{(j)}(n-\frac{x_0}{p})^j} }+\sum_{Q_0<p\leq Q}\eta^4\frac{H\log^{k-1}X}{p}\\
 \gg	\eta^2 H\log^{k-1}X.
\end{multline*}
Given our assumptions on the parameters $Q_0$ and $Q$, Mertens' theorem implies that
\[
\sum_{Q_0<p\leq Q}\frac{1}{p}\ll\log\log Q-\log\log Q_0\ll\log\log H^\omega-\log\log H^{\omega^2}\ll\eta^{-2},
\]
if $\omega$ is sufficiently small with respect to $\eta$. This implies the second summation term has a negligible contribution. By summing over $(x,\vec\alpha_x)\in\mathcal J$ one thus has
\[
\eta^4 X\log^{k-1}X\ll \sum_{p\in\mathcal P} \sum_{(x,\vec\alpha_x)\in\mathcal J}\Bigabs{\sum_{\frac{x}{p}<n\leq\frac{x+H}{p}}\frac{1_S(n)\tau_k(n)-t_n\tau^*_k(n)}{\pq(n)+1}e\bigbrac{\sum_{1\leq j\leq d}p^j\alpha_x^{(j)}(n-\frac{x_0}{p})^j} }.
\]
By combining the above estimate with the upper bound below
\[
\twosum{x\sim X}{x:H\text{-separated}}\sum_{\frac{x}{p}<n\leq\frac{x+H}{p}}(\tau_k(n)+t_n\tau^*_k(n))\ll\frac{X\log^{k-1}X}{p},
\]
one has $\sum_{p\in\mathcal P}\frac{1}{p}\gg\eta^{4}\gg \eta^{4}\frac{\sum_{Q_0<p\leq Q}\frac{1}{p}}{\eta^{-2}}$. 
\end{proof}


\section{The configuration is somewhat fully  connected}

In this section, we aim to show that, to some extent, the configuration we constructed in Lemma \ref{discretization} is fully connected. Specifically, we will define a type of relationship between any two elements in a configuration and ultimately identify numerous pairs of elements in $\mathcal J$ that are related. Before delving into this, we require some auxiliary results concerning exponential sum estimates.

\begin{lemma}[Only a few frequencies are inherently dominant]\label{exponential-sum}
Suppose that $\log^{10000k\log k}X\leq H\leq X^{1/2-\eps}$ and $x,z\in[X,2X]$ with $|z-x|\leq H$. Suppose that there is a set $\mathcal E\subseteq[-H,H]$ with $\#\mathcal E\ll\eta^6H$ satisfies 
\[
\sum_{h\in\mathcal E}\sum_{|n-x|\leq H}\tau_k(n)\tau_k(n+h)\ll\eta^6 H^2\log^{2k-2}X,
\] 
and  $\sum_{|n-x|\leq H}\tau_k(n)\tau_k(n+h)\ll\eta^{-3}H\log^{2k-2}X$ whenever $h\not\in\mathcal E$. Then
 there is  a set $\set{\vec\beta_i}_{1\leq i\leq K'}\subset\T^d$ with cardinality $K'\ll\eta^{-O(1)}$ such that the following holds. If $\vec\alpha\in\T^d$ is a frequency such that
\[
\eta H\log^{k-1}X\leq\Bigabs{\sum_{x<n\leq x+H}\frac{1_S(n)\tau_k(n)-t_n\tau^*_k(n)}{\pq(n)+1}e\bigbrac{\sum_{1\leq j\leq d}\alpha^{(j)}(n-z)^j}},
\]
 where $S\subseteq[X,2X]$ is defined in (\ref{s-3}) and $0<t_n<1$ for each $n\in[X,2X]$, then there is some  $\vec\beta\in\set{\vec\beta_i}_{1\leq i\leq K'}$ and some integer $1\leq q\ll\eta^{-O(1)}$ such that 
\[
\norm{q(\beta^{(j)}-\alpha^{(j)})}\ll\eta^{-O(1)}H^{-j}\qquad(1\leq j\leq d).
\]	
\end{lemma}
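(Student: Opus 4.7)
My plan is to proceed by induction on the degree $d$, using iterated Weyl differencing to peel off the leading coefficient $\alpha^{(d)}$ and then invoking the inductive hypothesis on the residual degree-$(d-1)$ phase. Throughout, I will write $g(n)=(1_S(n)\tau_k(n)-t_n\tau_k^*(n))/(\pq(n)+1)$, so that $F(\vec\alpha)=\sum_{x<n\leq x+H}g(n)\,e\bigbrac{\sum_j\alpha^{(j)}(n-z)^j}$ and $|g(n)|\ll\tau_k(n)$ pointwise.

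For the base case $d=1$, I would compute the fourth moment
\[
\int_\T|F(\alpha)|^4\,\rd\alpha=\sum_{|h|\leq 2H}|C(h)|^2,\qquad C(h)=\sum_n g(n)\overline{g(n+h)},
\]
and combine the pointwise bound $|C(h)|\ll\eta^{-3}H\log^{2k-2}X$ for $h\notin\mathcal E$ with the total-mass bound $\sum_{h\in\mathcal E}|C(h)|\ll\eta^6H^2\log^{2k-2}X$ (for the latter bounded trivially by $\|g\|_\infty\cdot\sum_{h\in\mathcal E}|C(h)|$) to deduce $\int_\T|F(\alpha)|^4\,\rd\alpha\ll\eta^{-O(1)}H^3\log^{4k-4}X$. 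Chebyshev then forces the sub-level set $\{\alpha:|F(\alpha)|\geq\eta H\log^{k-1}X\}$ to have measure $\ll\eta^{-O(1)}H^{-1}$, so only $\ll\eta^{-O(1)}$ many $1/H$-separated frequencies achieve the lower bound. A standard Dirichlet-approximation argument places each such $\alpha$ in a Bohr ball of radius $\eta^{-O(1)}H^{-1}$ about a rational $a/q$ with $q\ll\eta^{-O(1)}$, producing the candidate set $\{\beta_i\}$.

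For the inductive step $d\geq2$, I would Weyl difference $|F(\vec\alpha)|^2$ by setting $m=n+h$ to reduce the polynomial-phase degree to $d-1$, with the inner weight $g(n)\overline{g(n+h)}$ controlled in $\ell^1$ by the correlation hypothesis away from $\mathcal E$ and in total over $\mathcal E$. Iterating $d-1$ times and using H\"older's inequality, I arrive at a bound of the form
\[
|F(\vec\alpha)|^{2^{d-1}}\ll H^{2^{d-1}-d}\twosum{h_1,\dots,h_{d-1}}{|h_i|\leq H}\Bigabs{\sum_n g_{\vec h}(n)\,e\bigbrac{d!\,\alpha^{(d)}h_1\cdots h_{d-1}\,n}},
\]
where $g_{\vec h}$ is a product of shifts of $g$ whose $\ell^1$ and $\ell^2$ norms are controlled by iterated application of the shifted-correlation hypothesis together with Shiu's bound for diagonal $h_i$. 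Applying the base-case $L^4$ analysis to the last linear-in-$n$ sum forces $d!\,\alpha^{(d)}h_1\cdots h_{d-1}$ to lie in one of $\eta^{-O(1)}$ Bohr balls for most tuples $(h_1,\dots,h_{d-1})$; a pigeonholing on $(h_1,\dots,h_{d-1})$ and division then pins $\alpha^{(d)}$ itself into a Bohr ball of radius $\eta^{-O(1)}H^{-d}$ about a rational with denominator $\ll\eta^{-O(1)}$. Subtracting this rational off of $\alpha^{(d)}(n-z)^d$ and splitting $n$ into residue classes modulo the common denominator reduces the problem to a degree-$(d-1)$ polynomial phase applied to a modulated version of $g$, to which the inductive hypothesis applies.

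The main obstacle will be arranging the induction so that a \emph{single} denominator $q\ll\eta^{-O(1)}$ works simultaneously for all coefficients $\alpha^{(1)},\dots,\alpha^{(d)}$, rather than obtaining degree-dependent denominators $q_1,\dots,q_d$; this requires passing to a common multiple at each step and adjusting the candidate frequencies $\vec\beta_i$ accordingly, so that the cardinality $K'$ remains polynomial in $\eta^{-1}$. A secondary technical complication is that at every Weyl-differencing step, tuples $(h_1,\dots,h_{i})$ with some $h_j\in\mathcal E$ must be absorbed using only the total-mass bound $\sum_{h\in\mathcal E}\sum_{|n-x|\leq H}\tau_k(n)\tau_k(n+h)\ll\eta^6H^2\log^{2k-2}X$, which forces the loss at each differencing step to remain polynomial in $\eta^{-1}$ and hence demands H\"older (rather than Cauchy--Schwarz) at certain intermediate points to balance off-diagonal and exceptional contributions.
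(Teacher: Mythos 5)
Your approach is genuinely different from the paper's, but it has two serious gaps that I don't think are fixable with the hypotheses provided.

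\textbf{Gap 1: the $L^4$ base case breaks on the exceptional set.} You wish to bound $\int_\T|F|^4\,\rd\alpha=\sum_h|C(h)|^2$ and split $h$ according to membership in $\mathcal E$. For $h\notin\mathcal E$ the pointwise hypothesis gives what you want. But for $h\in\mathcal E$, the lemma supplies only the total-mass bound $\sum_{h\in\mathcal E}|C(h)|\ll\eta^6H^2\log^{2k-2}X$, and to pass from $\sum_{h\in\mathcal E}|C(h)|$ to $\sum_{h\in\mathcal E}|C(h)|^2$ you need a \emph{pointwise} bound on $|C(h)|$ for $h\in\mathcal E$. Your parenthetical suggests using $\|g\|_\infty$ for this, but $g(n)\ll\tau_k(n)$ and $\tau_k$ is unbounded (worst case $X^{o(1)}$), so this justification fails outright. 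The best honest pointwise bound is $|C(h)|\ll H\log^{2k-1}X$ (Shiu together with Cauchy--Schwarz), which loses a factor of $\log X$ relative to the needed $H\log^{2k-2}X$; the resulting estimate $\sum_{h\in\mathcal E}|C(h)|^2\ll\eta^6H^3\log^{4k-3}X$ then yields $K\ll\eta^{-O(1)}\log X$ instead of $K\ll\eta^{-O(1)}$. The paper sidesteps this entirely: it never forms an $L^4$ integral; it sums the level-set inequality over the candidate frequencies, applies Cauchy--Schwarz once, and then the contribution of $h\in\mathcal E$ carries a benign factor $K$ (from the trivial bound $|\sum_{i\le K}e(\dots)|\le K$) that cancels directly against the $K$ on the left, so only the total-mass bound is ever needed — no pointwise control on $\mathcal E$, no extra $\log X$.

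\textbf{Gap 2: iterated Weyl differencing requires higher-order divisor correlations that the hypotheses do not provide.} After the first differencing you are looking at a weight $g(n)\overline{g(n+h_1)}$ in the $n$-sum, which the 2-point hypothesis controls. But the very next differencing (needed once $d\ge3$) produces a four-fold weight $g(n)\overline{g(n+h_1)}\overline{g(n+h_2)}g(n+h_1+h_2)$, whose $\ell^1$ norm is a \emph{four-point} divisor correlation $\sum_n\tau_k(n)\tau_k(n+h_1)\tau_k(n+h_2)\tau_k(n+h_1+h_2)$. After $d-1$ differencings the weight $g_{\vec h}$ is a product of $2^{d-1}$ shifted divisor functions, and bounding its $\ell^1$/$\ell^2$ norms requires $2^{d-1}$-point correlation estimates on short intervals. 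None of this is in the statement's hypotheses (which are strictly 2-point), and the paper's input Lemma~\ref{divisor-correlation} (built on \cite{MRT-divisor}) is also only 2-point. The paper's method is specifically engineered to avoid this: it applies exactly \emph{one} Cauchy--Schwarz in $n$ (removing the divisor weight into a 2-point correlation), and the remaining object is an unweighted exponential sum in $h$ with a degree-$d$ polynomial phase, to which Weyl's inequality (\cite[Lemma 1.1.16]{Tao}) applies directly. All further differencing happens internally inside Weyl's inequality, on pure phases without divisor weights. For the lower-degree coefficients $\alpha^{(j)}$, $j<d$, the paper then inducts downward in degree: it restricts $n$ to short subintervals of length $\eta^{2A}H$ and to residue classes $\bmod\,q'$ to freeze the already-pinned higher-degree part of the phase, so that the one-differencing-plus-Weyl argument applies again to the remaining degree-$j$ monomial. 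This is the mechanism that keeps everything at 2-point correlations throughout. Your induction on degree $d$ does not have an analogue of this freezing step and therefore cannot avoid the higher-order weights.

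In short: an $L^4$ moment computation cannot be run with only the total-mass bound on $\mathcal E$, and iterated $n$-differencing is fundamentally incompatible with the weight $\tau_k$ given only 2-point correlation input. The paper's single-differencing reduction plus Weyl's bound on the resulting unweighted $h$-sum is the way out of both problems, and the degree-by-degree freezing handles the common-denominator issue you correctly flagged.
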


\begin{proof}
	
Without loss of generality, we may assume that	$\vec\alpha_1,\dots,\vec\alpha_K$ are all of the frequencies satisfying this inequality and also satisfying $\norm{\alpha_i^{(d)}-\alpha_{i'}^{(d)}}\gg H^{-d}$ for distinct $i\neq i'$. Summing over $\vec\alpha_1,\dots,\vec\alpha_K$  obtains that
\[
\eta HK\log^{k-1}X\leq\sum_{1\leq i\leq K}\Bigabs{\sum_{x<n\leq x+H}\frac{1_S(n)\tau_k(n)-t_n\tau^*_k(n)}{\pq(n)+1}e\bigbrac{\sum_{1\leq j\leq d}\alpha_i^{(j)}(n-z)^j} }.
\]
Square both sides, on noting that $\tau_k^*\ll\tau_k$ pointwise, one can get from Cauchy-Schwarz inequality and changing the order of summation  that
\begin{multline*}
\eta^2 H^2K\log^{2k-2}X	\ll\sum_{x<n\leq x+H}\sum_{x<n+h\leq x+H}\tau_k(n)\tau_k(n+h)\\
\Bigabs{\sum_{1\leq i\leq K}e\bigbrac{\alpha_i^{(d)}((n+h-z)^d-(n-z)^d)+\dots+\alpha_i^{(1)}((n+h-z)-(n-z))}}\\
\ll	\sum_{|h|\leq H}\sum_{x<n\leq x+H}\tau_k(n)\tau_k(n+h)\Bigabs{\sum_{1\leq i\leq K} e\bigbrac{\alpha^{(d)}_ih^d+P(h;\vec\alpha_i,n-z)}},
\end{multline*}
where $P(h;\vec\alpha_i,n-z)$ is a polynomial of degree at most $d-1$ with respect to the variable $h$. Since the assumption asserts that the contribution from $h\in\mathcal E$ can be bounded by $O(\eta^6H^2K\log^{2k-2}X)$ which is negligible, we can conclude that
\begin{multline*}
\eta^2 H^2K\log^{2k-2}X	\ll\sum_{h\in[-H,H]\backslash \mathcal E }\sum_{x<n\leq x+H}\tau_k(n)\tau_k(n+h)\Bigabs{\sum_{1\leq i\leq K} e\bigbrac{\alpha^{(d)}_ih^d+P(h;\vec\alpha_i,n-z)}}\\
\ll\eta^{-3}H\log^{2k-2}X\sum_{h\in[-H,H]\backslash \mathcal E }\Bigabs{\sum_{1\leq i\leq K} e\bigbrac{\alpha^{(d)}_ih^d+P(h;\vec\alpha_i,n-z)}},
\end{multline*}
by the pointwise bound for $\sum_{x<n\leq x+H}\tau_k(n)\tau_k(n+h)$ whenever $h\notin\mathcal E$. Taking note that $\#\mathcal E\ll\eta^6H$, we thus have
\[
\eta^5HK\ll\sum_{|h|\leq H}\Bigabs{\sum_{1\leq i\leq K} e\bigbrac{\alpha^{(d)}_ih^d+P(h;\vec\alpha_i,n-z)}}.
\]
Taking square both sides, one may find from Cauchy-Schwarz inequality that
\[
\eta^{10}K^2H\ll\sum_{1\leq i,i'\leq K}\Bigabs{\sum_{|h|\leq H}e\bigbrac{h^d(\alpha_i^{(d)}-\alpha_{i'}^{(d)})+P(h;\vec\alpha_i,\vec\alpha_{i'},n-z)}},
\]
where $P(h;\vec\alpha_i,\vec\alpha_{i'},n-z)$ is a polynomial of degree at most $d-1$ with respect to $h$. Pigeonhole principle then shows that there are $\gg\eta^{10}K^2$ of pairs $(i,i')\in[1,K]^2$ such that
\[
\eta^{10}H\ll\Bigabs{\sum_{|h|\leq H}e\bigbrac{h^d(\alpha_i^{(d)}-\alpha_{i'}^{(d)})+P(h;\vec\alpha_i,\vec\alpha_{i'},n-z)}}.
\]
Unless $K\ll\eta^{-10}$ --- in which case we can choose $\set{\vec\gamma_{d,j}}_{1\leq j\leq K'}$ as the initial sequence $\set{\vec\alpha_i}_{1\leq i\leq K}$, there must be a pair $(\vec\alpha_i,\vec\alpha_{i'})$ with $i\neq i'$ such that the above inequality holds. By Weyl's bound \cite[Lemma 1.1.16]{Tao} there is some integer $1\leq q'\ll\eta^{-O(1)}$ such that $\norm{q'(\alpha_i^{(d)}-\alpha_{i'}^{(d)})}\ll\eta^{-O(1)}H^{-d}$. Therefore, in this case, we may rule either $\vec\alpha_i$ or $\vec\alpha_{i'}$ out of the set $\set{\vec\alpha_i}_{1\leq i\leq K}$. It is possible to continue the excluding process until there are only $O(\eta^{-10})$ elements remaining. And by taking $\set{\vec\gamma_{d,j}}_{1\leq j\leq K'}$ as the remaining frequencies,  the desired conclusion holds for degree $j=d$. 

We claim that the desired set for which the entire conclusion holds can be found by induction of the argument on degrees $1\leq j\leq d$. Let $1\leq j'\leq d-1$ and $\set{\vec\gamma_{j'+1,i}}_{1\leq i\ll\eta^{-O(1)}}\subset\T^d$ be a set of frequencies. Suppose that $\vec\alpha\in\T^d$ is a frequency such that the following two conditions hold:
\begin{enumerate}
	\item 
	\begin{align}\label{large-exp}
	\Bigabs{\sum_{x<n\leq x+H}\frac{1_S(n)\tau_k(n)-t_n\tau_k^*(n)}{\pq(n)+1}e\bigbrac{\sum_{1\leq j\leq d}\alpha^{(j)}(n-z)^j} }\geq\eta H\log^{k-1}X	
	\end{align}
\item there is an integer $q'\ll\eta^{-O(1)}$, a large constant $A>1$ and a frequency $\vec\gamma\in\set{\vec\gamma_{j'+1,i}}_{1\leq i\ll\eta^{-O(1)}}$ such that
\begin{align}\label{gamma-alpha}
\norm{q'(\gamma^{(j)}-\alpha^{(j)})}\ll\eta^{-A}H^{-j} \text{ for all } j'<j\leq d.	
\end{align}

\end{enumerate}
Our goal is to find another set of frequencies $\set{\vec \gamma_{j',i}}_{1\leq i\ll \eta^{-O(1)}}$ such that (\ref{gamma-alpha})  holds for $j'\leq j\leq d$. To do this, let us fix an integer $q'\ll\delta^{-O(1)}$ and a frequency $\vec\gamma\in\set{\vec\gamma_{j'+1,i}}_{1\leq i\ll\eta^{-O(1)}}$. Suppose that $\vec\alpha_1,\dots,\vec\alpha_K$ are all of the frequencies such that (\ref{large-exp}) and (\ref{gamma-alpha}) hold with this particular $\vec\gamma$ and $q'$, besides, we also assume that $\norm{\alpha_i^{(j')}-\alpha_{i'}^{(j')}}\gg H^{-j'}$ whenever $i\neq i'$.

Decomposing the interval $[x,x+H]$ into sub-intervals of length $\eta^{2A}H$, on following the argument of degree $j=d$, one has
\begin{multline*}
	\eta HK\log^{k-1}X\leq\sum_{x_0}\sum_{1\leq i\leq K}\Bigabs{\sum_{x_0<n\leq x_0+\eta^{2A}H}\frac{1_S(n)\tau_k(n)-t_n\tau_k^*(n)}{\pq(n)+1}e\bigbrac{\sum_{1\leq j\leq d}\alpha_i^{(j)}(n-z)^j}}\\
	\leq \sum_{x_0}\sum_{1\leq i\leq K}\Bigabs{\sum_{x_0<n\leq x_0+\eta^{2A}H}\frac{1_S(n)\tau_k(n)-t_n\tau_k^*(n)}{\pq(n) +1}\prod_{1\leq j\leq d} e\bigbrac{\gamma^{(j)}(n-z)^j) e((\alpha_i^{(j)}-\gamma^{(j)})(n-z)^j}},
\end{multline*}
where $x_0$ takes values of $\eta^{2A}H$-separated points in the interval $(x,x+H]$ so that $(x,x+H]=\bigsqcup_{x_0}(x_0,x_0+\eta^{2A}H]$. We may decompose $(x_0,x_0+\eta^{2A}H]$ into arithmetic progressions according to the residue classes $\mod{q'}$ to get that
\begin{multline*}
\sum_{r\mod{q'}}\sum_{x_0}\sum_{1\leq i\leq K}\Bigabs{\sum_{x_0<n\leq x_0+\eta^{2A}H\atop n\equiv r\mod{q'}}\frac{1_S(n)\tau_k(n)-t_n\tau_k^*(n)}{\pq(n)+1}\\
\prod_{1\leq j\leq d} e\bigbrac{\gamma^{(j)}(n-z)^j) e((\alpha_i^{(j)}-\gamma^{(j)})(n-z)^j}}
\gg\eta	HK\log^{k-1}X.
\end{multline*}
Noting that  our assumption (\ref{gamma-alpha}) implies that when $j'<j\leq d$ there are integers $1\leq a_j\leq q'$ such that
\[
\alpha_i^{(j)}-\gamma^{(j)}=\frac{a_j}{q'}+O(\eta^{-A}H^{-j}),
\]
one may conclude from  Taylor expansion that whenever $1\leq n-x_0\leq \eta^{2A}H$, $|x_0-z|\ll H$ and $j'<j\leq d$ that  the phase functrion $e\bigbrac{(n-z)^j(\alpha_i^{(j)}-\gamma^{(j)})}$ is equal to 
\begin{multline*}
e\bigbrac{(x_0-z)^j(\alpha_i^{(j)}-\gamma^{(j)})}e\Bigbrac{\frac{a_j}{q'}\bigbrac{(n-z)^j-(x_0-z)^j}}e\Bigbrac{O\bigbrac{\eta^{-A}H^{-j}\sum_{1\leq i\leq j}(n-x_0)^i(x_0-z)^{j-i}}}\\
=e\bigbrac{(x_0-z)^j(\alpha_i^{(j)}-\gamma^{(j)})}	e\Bigbrac{\frac{a_j}{q'}\bigbrac{(n-z)^j-(x_0-z)^j}} (1+O(\eta^A)).
\end{multline*}
Taking note that the first factor is independent of $n$ and the second factor is also independent of $n$ if we restrict $n$ into the arithmetic progression $r\mod{q'}$, therefore, 
\begin{multline*}
\eta HK\log^{k-1}X\ll\sum_{r\mod{q'}}\sum_{x_0}\sum_{1\leq i\leq K}\Bigabs{\sum_{x_0<n\leq x_0+\eta^{2A}H\atop n\equiv r\mod{q'}}\\\frac{1_S(n)\tau_k(n)-t_n\tau_k^*(n)}{\pq(n)+1}e\bigbrac{\sum_{1\leq j\leq d}\gamma^{(j)}(n-z)^j}e\bigbrac{(\alpha_i^{(j')}-\gamma^{(j')})(n-z)^{j'}+\cdots+(\alpha_i^{(1)}-\gamma^{(1)})(n-z)}}.
\end{multline*}

Performing as before, one can find that
\begin{multline*}
\eta^2 H^2K\log^{2k-2}X\ll\\
\sum_{|h|\leq H}\sum_{x<n\leq x+H}\tau_k(n)\tau_k(n+h)
\Bigabs{\sum_{1\leq i\leq K}e\bigbrac{(\alpha_i^{(j')}-\gamma^{(j')})h^{j'}+P(h)}},	
\end{multline*}
where $P(h)=P(h;\alpha_i^{(j')}-\gamma^{(j')},\cdots,\alpha_i^{(1)}-\gamma^{(1)},n-z)$ is a polynomial of degree at most $j'-1$ with respect to $h$. Therefore, the previous calculation leads us to
\[
\eta^{10}K^2H\ll\sum_{1\leq i,i'\leq K}\bigabs{\sum_{h\leq H}e\bigbrac{(\alpha_i^{(j')}-\alpha_{i'}^{(j')})h^{j'}+\tilde P(h)}},
\]
where $\tilde P(h)$ is a polynomial of degree at most $j'-1$. Weyl's bound \cite[Lemma 1.1.16]{Tao} then yields that there are at most $O(\eta^{-10})$  members in $\set{\vec\alpha_i}_{1\leq i\leq K}$ forming a sequence $\set{\vec\gamma_{i'}}_{1\leq i'\ll\eta^{-10}}$ such that for every $\vec\alpha_{i}$ there is some $\vec\gamma_{i'}$ and some integer $q^*\ll\eta^{-O(1)}$ such that $\norm{q^*(\gamma_{i'}^{(j')}-\alpha_i^{(j')})}\ll\eta^{-O(1)}H^{-j'}$. Set $q_{j'}=q'q^*$, then
\[
\norm{q_{j'}(\gamma_{i'}^{(j')}-\alpha_i^{(j')})}\ll\eta^{-O(1)}H^{-j'};
\]
besides, it follows from (\ref{gamma-alpha}) and the triangle inequality that whenever $j'< j\leq d$
\[
\norm{q_{j'}(\gamma_{i'}^{(j)}-\alpha_i^{(j)})}\leq \norm{q_{j'}(\gamma_{i'}^{(j)}-\gamma^{(j)})}+\norm{q_{j'}(\alpha_{i}^{(j)}-\gamma^{(j)})}\ll\eta^{-O(1)}H^{-j}.
\]
The claimed sequence $\set{\vec\gamma_{j',i}}_i$ is the union of sequences $\set{\vec\gamma_{i'}}_{1\leq i'\ll\eta^{-10}}$ obtained by varying $\vec\gamma$  over $\set{\vec\gamma_{j'+1,i}}_{i\ll\eta^{-O(1)}}$ and varying $q'$ over $[1,O(\eta^{-O(1)})]$. It is easy to see that $\#\set{\vec\gamma_{j',i}}\ll\eta^{-O(1)}$. And the lemma follows by taking $\set{\vec\beta_i}$ as $\set{\vec\gamma_{1,i}}_{1\leq i\ll\eta^{-O(1)}}$.

\end{proof}

With the assistance of Lemma \ref{exponential-sum}, we can now build a ``narrow" configuration $\mathcal J_1$ that is contained in a narrow interval for which elements in $\mathcal J$ can be related to elements in $\mathcal J_1$. We call this step as \textit{scaling down}.

\begin{proposition}[Scaling down]\label{scale-down}
Suppose that $\log^{10000k\log k}X\leq H\leq X^{1/3-\eps}$, $Q_0\leq P<Q/2$, $\mathcal J\subseteq[X,2X]\times\T^d$ is a $(c\eta^2,H)$-configuration, and for each $(x,\vec\alpha_x)\in\mathcal J$ there exist   $\gg\eta^6 P/\log P$  primes $p\in[P,2P]$ and some $|x_0-x|\leq H$ such that
\[
\Bigabs{ \sum_{\frac{x}{p}<n\leq\frac{x+H}{p}}\frac{1_S(n)\tau_k(n)-t_n\tau^*_k(n)}{\pq(n)+1}e\bigbrac{\sum_{1\leq j\leq d}p^j\alpha_x^{(j)}(n-x_0/p)^j} }\geq \eta^4\frac{H\log^{k-1}X}{p},
\]
where $S\subseteq[X,2X]$ is defined in (\ref{s-3}) and $0<t_n<1$ for every $n\in[X,2X]$.

Then there is a $(c\eta^{O(1)},\frac{H}{P})$-configuration $\mathcal J_1\subseteq[\frac{X}{2P},\frac{2X}{P}]\times\T^d$ for which when $(y,\vec\alpha_y)\in\mathcal J_1$ there exists an integer $1\leq q_y\ll\eta^{-O(1)}$ satisfying the following two events:	
\begin{enumerate}
\item 	there is  some $y_0$ with $|y_0-y|\leq H/P$ such that
\[
\Bigabs{ \sum_{y<n\leq y+\frac{H}{P}}\frac{1_S(n)\tau_k(n)-t_n\tau^*_k(n)}{\pq(n)+1} e\bigbrac{\sum_{1\leq j\leq d}q_y^{-1}\alpha_y^{(j)}(n-y_0)^j} }\gg \eta^{4}\frac{H\log^{k-1}X}{P};
\] 
\item there are $\gg\eta^{O(1)}\frac{P}{\log P}$ elements $(p,(x,\vec\alpha_x))\in[P,2P]\times\mathcal J$  such that
\[
|py-x|\leq 2H,\qquad \alpha_y^{(j)}\equiv p^jq_y\alpha_x^{(j)}+O(\eta^{-O(1)}(P/H)^j)\mod 1\qquad(1\leq j\leq d). 
\]
\end{enumerate}

\end{proposition}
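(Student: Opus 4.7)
The plan is to push scale-$H$ correlations indexed by $\mathcal J\subseteq[X,2X]\times\T^d$ down to scale-$(H/P)$ correlations indexed by a new configuration $\mathcal J_1\subseteq[X/(2P),2X/P]\times\T^d$, through the natural many-to-one map that sends a good pair $(p,(x,\vec\alpha_x))$ to the target point $x/p$ equipped with the rescaled frequency $(p^d\alpha_x^{(d)},\dots,p\alpha_x^{(1)})$. Many source pairs may crowd near the same target, so the central task is to distill the cluster of rescaled frequencies arriving at each target into a single representative $\vec\alpha_y$, which is precisely what Lemma \ref{exponential-sum} is engineered to do.

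Concretely, I would cover $[X/(2P),2X/P]$ by $H/P$-separated reference points $\{y_k\}$ and assign each good pair $(p,(x,\vec\alpha_x))$ to the unique $y_k$ with $0\leq x/p-y_k<H/P$. The total number of good pairs is $\gg\eta^{O(1)}XP/(H\log P)$ against $\asymp X/H$ reference points, so on average each $y_k$ absorbs $\gg\eta^{O(1)}P/\log P$ pairs. Invoking Lemma \ref{divisor-correlation} at scale $H/P$ with $\delta=\eta^{C}$ for a sufficiently large $C$ removes only $O(\eta^{C}X/P)$ exceptional reference points, a negligible fraction of our $\asymp X/H$ grid, and a pigeonhole then yields $\gg\eta^{O(1)}X/H$ good $y_k$ that simultaneously receive $\gg\eta^{O(1)}P/\log P$ pairs and satisfy the divisor-correlation hypothesis of Lemma \ref{exponential-sum} at scale $H/P$. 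For each such $y_k$ and each assigned pair, we obtain a correlation of size $\gg\eta^{O(1)}(H/P)\log^{k-1}X$ on an interval essentially equal to $(y_k,y_k+H/P]$ at the frequency $(p^d\alpha_x^{(d)},\dots,p\alpha_x^{(1)})$; the boundary discrepancy between $(x/p,(x+H)/p]$ and $(y_k,y_k+H/P]$ is bounded by $O((H/P)\log^{k-1}X)$ via Lemma \ref{shiu-bound} and absorbed into the implicit constant.

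Then I apply Lemma \ref{exponential-sum} at each good $y_k$ with the short scale $H/P$: it returns a sub-list $\{\vec\beta_i\}_i\subseteq\T^d$ of size $\ll\eta^{-O(1)}$, drawn from the correlating frequencies themselves (as is visible in the inductive pruning inside its proof), together with an integer $q\ll\eta^{-O(1)}$ for each correlating frequency $\vec\alpha$, such that $\|q(\beta_i^{(j)}-\alpha^{(j)})\|\ll\eta^{-O(1)}(P/H)^j$ for some $i$. A further pigeonhole over the $\ll\eta^{-O(1)}$ choices of $(i,q)$ selects a single pair $(\vec\beta^*,q_y)$ covering $\gg\eta^{O(1)}P/\log P$ of our source pairs; setting $\vec\alpha_{y_k}:=q_y\vec\beta^*$, I declare $(y_k,\vec\alpha_{y_k})\in\mathcal J_1$. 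Conclusion (1) then holds at frequency $q_y^{-1}\vec\alpha_{y_k}=\vec\beta^*$ because $\vec\beta^*$ is itself one of the correlating frequencies $(p^{*d}\alpha_{x^*}^{(d)},\dots,p^*\alpha_{x^*}^{(1)})$ and inherits the corresponding source pair's correlation on $(y_k,y_k+H/P]$ after the boundary adjustment, with $y_0$ read off as $x_0^*/p^*$ from that source pair; conclusion (2) is immediate from the clustering inequality $\|q_y(p^j\alpha_x^{(j)}-\beta^{*(j)})\|\ll\eta^{-O(1)}(P/H)^j$, which rewrites as the required congruence $\alpha_{y_k}^{(j)}\equiv p^jq_y\alpha_x^{(j)}+O(\eta^{-O(1)}(P/H)^j)\pmod 1$, and $|py_k-x|\leq H\leq 2H$ follows from $|y_k-x/p|\leq H/P$. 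The main obstacle is the simultaneous bookkeeping of the two exceptional sets (one from Lemma \ref{divisor-correlation} on the reference points $y_k$, one from Lemma \ref{exponential-sum} on the frequency clustering) and confirming that the distilled $\vec\beta^*$ can be chosen to be one of the actual correlating frequencies so that no non-trivial phase transfer is needed to obtain conclusion (1); once this is set up, the rest reduces to a double pigeonhole on pairs followed by one application of Lemma \ref{exponential-sum}.
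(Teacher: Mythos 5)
Your overall strategy matches the paper's: assign each good pair $(p,(x,\vec\alpha_x))$ to a target $y\approx x/p$ at scale $H/P$, pigeonhole to find popular targets, rescale the correlating interval, then invoke Lemma~\ref{exponential-sum} and pigeonhole over its output to extract $\vec\alpha_y$ and $q_y$. However, there is a genuine gap in the rescaling step.

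You fix a grid of $H/P$-separated reference points $\{y_k\}$ and assign each pair to the unique $y_k$ with $0\leq x/p-y_k<H/P$. With this tolerance, the symmetric difference between $(x/p,(x+H)/p]$ (of length $H/p\leq H/P$) and $(y_k,y_k+H/P]$ is typically of size comparable to $H/P$; indeed, when $x/p-y_k$ is close to $H/P$ the two intervals are nearly disjoint. Bounding the discrepancy by Shiu's lemma then gives a boundary contribution of order $(H/P)\log^{k-1}X$, which is \emph{larger} than the correlation $\gg\eta^4(H/P)\log^{k-1}X$ you need to preserve; it cannot be ``absorbed into the implicit constant.'' The paper avoids this by \emph{not} fixing a grid: for each pair it admits every integer $y$ with $|y-x/p|\leq\eta^5 H/P$ (there are $\gg\eta^5H/P$ of these per pair), so the symmetric difference of the two intervals is $\leq\eta^5 H/P$. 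Combined with a pointwise upper bound at the \emph{same tiny scale} $\eta^5H/P$ supplied by Corollary~\ref{exceptional-set} (outside a sparse exceptional set), the boundary contribution is $\ll\eta^5(H/P)\log^{k-1}X$, genuinely smaller than the $\eta^4$-signal. Only afterwards does the paper pigeonhole to extract an $H/P$-separated subfamily of such $y$. Your version collapses these two steps and loses the crucial $\eta$-gain in the boundary term.

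A secondary point: even granting that the frequency $\vec\beta^*$ produced by Lemma~\ref{exponential-sum} can be taken among the correlating frequencies $(p^{*d}\alpha_{x^*}^{(d)},\dots,p^*\alpha_{x^*}^{(1)})$, the inherited correlation is over the interval $(x^*/p^*,(x^*+H)/p^*]$, not $(y_k,y_k+H/P]$, so the same boundary adjustment is needed and the same issue recurs. The paper in fact performs a phase transfer at the end (Taylor expansion in $e\bigl(q^{-1}(\alpha_y^{(j)}-q p^j\alpha_x^{(j)})(n-y_0)^j\bigr)$) rather than trying to select $\vec\beta^*$ as an exact frequency; the incurred error $O(\eta^{O(1)}\sum_{y<n\leq y+H/P}\tau_k(n))$ is controlled by Lemma~\ref{discretization}(3). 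That choice is benign, but it is the tight $\eta^5 H/P$ matching of intervals, together with Corollary~\ref{exceptional-set}, that actually carries the rescaling.
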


\begin{proof}
	
Corollary \ref{exceptional-set} yields that outside of $\#\mathcal E\ll \eta^{20} \frac{X}{P}$ numbers $s\in[\frac{X}{2P},\frac{2X}{P}]$  the pointwise upper bound below holds
\begin{align}\label{upper-bound}
\sum_{s<n\leq s+\eta^5\frac{H}{P}}(\tau_k(n)+\tau^*_k(n))\ll\eta^5\frac{H\log^{k-1}X}{P}.
\end{align}
 Now for a fixed  pair $(p,(x,\vec\alpha_x)) \in [P,2P]\times \mathcal J$ satisfies
\begin{align}\label{ass-bound}
\Bigabs{ \sum_{\frac{x}{p}<n\leq\frac{x+H}{p}}\frac{1_S(n)\tau_k(n)-t_n\tau^*_k(n)}{\pq(n)+1} e\bigbrac{\sum_{1\leq j\leq d}p^j\alpha_x^{(j)}(n-x_0/p)^j} }\geq \eta^4\frac{H\log^{k-1}X}{p},
\end{align}
it is easy to find that if $I\subseteq[\frac{X}{2P},\frac{2X}{P}]$ is an interval with $|I|\gg \frac{H}{P}$ and $|I\bigtriangleup[\frac{x}{p},\frac{x+H}{p}]|\leq \eta^5 \frac{H}{P}$, by naming the starting point of this interval as $y$, we have $|py-x|\leq 2H$ and $y\in[\frac{X}{2P},\frac{2X}{P}]$, and there are $\gg\eta^5\frac{H}{P}$ such $y$ created by this pair $(p,(x,\vec\alpha_x))$. When $(p,(x,\vec\alpha_x))$ varies, there are in total $\gg\eta^{13}\frac{H}{P}\frac{P}{\log P}\frac{X}{H}$ such $y$ produced (counting  the occurrences). On the contrary, noting that each $y\in[\frac{X}{2P},\frac{2X}{P}]$ can only relate to at most $O(P/\log P)$ of $(p,(x,\vec\alpha_x))$, we can see these $y$ for which either $y-\eta^5\frac{H}{P}$  or  $y+\frac{H}{P}$ is in the exceptional set $\mathcal E$ are sparse. Moreover, the application of Lemma \ref{divisor-correlation} with parameters $\delta=\eta^{15}$, $X=X/H$, and $H=H/P$  implies that the set of $y$ failing to satisfy the conclusion of Lemma \ref{divisor-correlation} is bounded by $O(\eta^{15}\frac{P}{\log P}\frac{X}{P})$ which is also sparse. Thus, after removing those exceptional $y$, we can assume that there are $\gg\eta^{13}\frac{H}{P}\frac{P}{\log P}\frac{X}{H}$ of $(y,p,(x,\vec\alpha_x))$ such that
\begin{enumerate}
	\item $|py-x|\leq 2H$;
	\item (\ref{upper-bound}) holds for both $s=y-\eta^5\frac{H}{P}$ and $s=y+\frac{H}{P}$;
	\item there is an exceptional set $\mathcal E'\subseteq[-H/P,H/P]$ with $|\mathcal E'|\ll\eta^{30}H/P$ and such that when $h\not\in\mathcal E'$ we have $\sum_{|n-y|\leq H/P}\tau_k(n)\tau_k(n+h)\ll\eta^{-15}(H/P)\log^{2k-2}X$, and $\sum_{h\in\mathcal E'}\sum_{|n-y|\leq H/P}\tau_k(n)\tau_k(n+h)\ll\eta^{30}(H/P)^2\log^{2k-2}X$.
\end{enumerate}
 Pigeonhole principle illustrates that there are $\gg\eta^{13}\frac{P}{\log P}\frac{X}{H}$ of such triples $(y,p,(x,\vec\alpha_x))$ for which $y$ are $\frac{H}{P}$-separated. Another application of pigeonhole principle implies that there are $\gg \eta^7\frac{X}{H}$ of $\frac{H}{P}$-separated $y$ such that   the above (1--3) holds, besides, for each of those $y$ there are $\gg\eta^6P/\log P$ of $(p,(x,\vec\alpha_x))$ such that $|py-x|\leq 2H$. We can collect these $y$ into a set $\mathcal Y$.

Let $y\in\mathcal Y$, we may suppose that $I_y$ is the interval such that $\bigabs{[y,y+|I_y|]\bigtriangleup[\frac{x}{p},\frac{x+H}{p}]} \leq \eta^5 \frac{H}{P}$ and  rescale the length of the interval $I_y$ as $\frac{H}{P}$, it then follows from (\ref{ass-bound}) that
\begin{multline*}
\eta^4\frac{H\log^{k-1}X}{p}\leq \Bigabs{\sum_{y<n\leq y+\frac{H}{P}}\frac{1_S(n)\tau_k(n)-t_n\tau^*_k(n)}{\pq(n)+1}e\bigbrac{\sum_{1\leq j\leq d}p^j\alpha_x^{(j)}(n-x_0/p)^j}}\\
	+\Bigabs{\sum_{\frac{x}{p}<n\leq y}(\tau_k(n)+\tau^*_k(n))}+\Bigabs{\sum_{y+\frac{H}{P}<n\leq \frac{x+H}{p}}(\tau_k(n)+\tau^*_k(n))}.
	\end{multline*}
Since our assumption $\bigabs{[y,y+|I_y|]\bigtriangleup[\frac{x}{p},\frac{x+H}{p}]} \leq \eta^5 \frac{H}{P}$ reveals that $|y-\frac{x}{p}|\leq\eta^5\frac{H}{P}$ and $|\frac{x+H}{p}-y-\frac{H}{P}|\leq\eta^5\frac{H}{P}$, taking (\ref{upper-bound}) into account, we can conclude that the last two sums on the right-hand side are negligible. Therefore, when $y\in\mathcal Y$ there is some $|y_0-y|\leq H/P$ such that
\[
\eta^4\frac{H\log^{k-1}X}{P}\ll \Bigabs{\sum_{y<n\leq y+\frac{H}{P}}\frac{1_S(n)\tau_k(n)-t_n\tau^*_k(n)}{\pq(n)+1} e\bigbrac{\sum_{1\leq j\leq d}p^j\alpha_x^{(j)}(n-y_0)^j}}	
\]
holds for $\gg\eta^6P/\log P$ of pairs  $(p,(x,\vec\alpha_x))$ and $|py-x|\leq 2H$.

Fixing such a $y\in\mathcal Y$ for the moment, it is immediately from Lemma \ref{exponential-sum} that there are at most $O(\eta^{-O(1)})$ frequencies $\vec\beta_1,\dots,\vec\beta_K$ such that for each $(p^d\alpha^{(d)}_{x},\cdots,p\alpha^{(1)}_{x})$ there is some $\vec\beta_i\in\set{\vec\beta_1,\dots,\vec\beta_K}$ and some integer $1\leq q\ll\eta^{-O(1)}$ such that 
\[
\norm{q(\beta_i^{(j)}-p^j\alpha^{(j)}_{x})}\ll\eta^{-O(1)}(P/H)^j \qquad(1\leq j\leq d).
\]
 On noting that there are totally $O(\eta^{-O(1)})$ choices of $(\vec\beta_i,q)$, pigeonhole principle tells us that there is some $\vec\beta_i$ and some integer $q$ such that the above inequality holds for $\gg\eta^{O(1)}P/\log P$ pairs $(p,(x,\vec\alpha_x))$. We can set $\vec\alpha_y$ as this $q\vec{\beta_i}$ and set
$\mathcal J_1=\cup_{y\in \mathcal Y}\set{(y,\vec\alpha_y)}$, then $\mathcal J_1\subset[\frac{X}{2P},\frac{2X}{P}]\times\T^d$ is a $(c\eta^{O(1)},H)$-configuration. Taking $z_{1,x}^{(1)}=y$, $h_1=H/P$ in Lemma \ref{discretization} (3) and enlarging $q$ if necessary, it follow from Taylor expansion and the above two inequalities that
\begin{multline*}
\Bigabs{\sum_{y<n\leq y+\frac{H}{P}}\frac{1_S(n)\tau_k(n)-t_n\tau^*_k(n)}{\pq(n)+1}e\bigbrac{\sum_{1\leq j\leq d}q^{-1}\alpha_y^{(j)}(n-y_0)^j}}	\\
= \Bigabs{\sum_{y<n\leq y+H/P}\frac{1_S(n)\tau_k(n)-t_n\tau^*_k(n)}{\pq(n)+1}\prod_{1\leq j\leq d} e(p^j\alpha_x^{(j)}(n-y_0)^j)e\bigbrac{q^{-1}(\alpha_y^{(j)}-qp^j\alpha_x^{(j)})(n-y_0)^j)} }\\
\gg \Bigabs{\sum_{y<n\leq y+H/P}\frac{1_S(n)\tau_k(n)-t_n\tau^*_k(n)}{\pq(n)+1}e\bigbrac{\sum_{1\leq j\leq d}p^j\alpha_x^{(j)}(n-y_0)^j}}-O(\eta^{O(1)}\sum_{y<n\leq y+H/P}\tau_k(n))\\
\gg\eta^4\frac{H\log^{k-1}X}{P}.
\end{multline*}
Therefore, $(y,\vec\alpha_y)\in\mathcal J_1$ satisfies the two conditions of the lemma.

\end{proof}

The next conclusion is a generalization of \cite[Proposition 3.2]{MRT20} and \cite[Proposition 3.6]{MRTTZ}, and this is also the main result of this section.

\begin{proposition}[Connectedness]\label{connected}
Suppose that $\exp(\log^\eps X)\leq H\leq X^{1/3-\eps}$ and
\[
\int_X^{2X}\sup_{\vec\alpha\in\T^d} \sup_{|x_0-x|\leq H} \bigabs{\sum_{x<n\leq x+H} (\tau_k(n)-\tau^*_k(n)) e\bigbrac{\sum_{1\leq j\leq d}\alpha^{(j)}(n-x_0)^j}}\,\rd x\geq\eta HX \log^{k-1}X,
\]
then there are two numbers $P,P'$ with  $H^{\omega^2}<P\leq H^\omega$ and $\bigbrac{\frac{H}{P}}^{\omega^3}<P'<\bigbrac{\frac{H}{P}}^{\omega^2}$, two disjoint subsets of primes $\po,\pt\subset[P,2P]$ with $|\po|,|\pt|\gg\eta^{O(1)}\frac{P}{\log P}$, and a $(c\eta^2,H)$-configuration $\mathcal J\subseteq[X,2X]\times\T^d$ such that the following statement holds.

\begin{enumerate}
	\item There are $\gg\eta^{O(1)}(\frac{P}{\log P})^2\frac{X}{H}$ of quadruple 	$(p,q,(x_1,\vec\alpha_{x_1}),(x_2,\vec\alpha_{x_2}))\in\po\times\pt\times\mathcal J^2$ such that $|\frac{x_1}{p}-\frac{x_2}{q}|\ll \frac{H}{P}$ and 
\[
p^j\alpha^{(j)}_{x_1}\equiv q^j\alpha^{(j)}_{x_2}+ O\bigbrac{\eta^{-O(1)}(P/H)^j} \mod {p'} \qquad (1\leq j\leq d)
\]
hold for $\gg\eta^{O(1)}\frac{P'}{\log P'}$  primes $p'\in[P',2P']$;
\item for each $(x,\vec\alpha_x)\in\mathcal J$  there is a number $x_0$ with $|x_0-x|\leq H$  such that
\[
\bigabs{\sum_{x<n\leq x+H}(1_S(n)\tau_k(n)-\tau_k^*(n))e\bigbrac{\sum_{1\leq j\leq d}\alpha_x^{(j)}(n-x_0)^j}}\gg\eta H\log^{k-1}X.
\]
\end{enumerate} 
\end{proposition}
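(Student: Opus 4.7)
My plan is to establish the proposition by iterating the extract-and-scale-down machinery twice, following the strategy of \cite[Proposition 3.2]{MRT20}. First, I apply Lemma \ref{4-1} to the given integral bound, producing a $(c\eta^2, H)$-configuration $\mathcal J \subseteq [X,2X] \times \T^d$ and a number $P \in [Q_0, Q/2]$ such that condition (2) of the proposition holds (this is immediate from the construction of $\mathcal J$ in the proof of Lemma \ref{4-1}, which passes through the discretisation Lemma \ref{discretization}), and such that for each $(x, \vec\alpha_x) \in \mathcal J$ the shifted correlation at scale $H/p$ holds on $\gg \eta^6 P/\log P$ primes $p \in [P, 2P]$. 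Proposition \ref{scale-down} then yields a $(c\eta^{O(1)}, H/P)$-configuration $\mathcal J_1 \subseteq [X/(2P), 2X/P] \times \T^d$, each of whose elements $(y, \vec\alpha_y)$ is linked to $\gg \eta^{O(1)} P/\log P$ pairs $(p, (x, \vec\alpha_x)) \in [P,2P] \times \mathcal J$ with $|py - x| \leq 2H$ and the congruence $\alpha_y^{(j)} \equiv p^j q_y \alpha_x^{(j)} + O(\eta^{-O(1)}(P/H)^j) \mod{1}$.

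I then iterate the argument. The correlation bound on $\mathcal J_1$ at scale $H/P$ holds uniformly over $\gg \eta^{O(1)} X/H$ elements, so I may rerun Lemma \ref{4-1} and Proposition \ref{scale-down} at the secondary scale, extracting primes $p' \in [P', 2P']$ with $P' \in ((H/P)^{\omega^3}, (H/P)^{\omega^2})$ via Ramar\'e's identity and applying scaling-down once more. This produces a $(c\eta^{O(1)}, H/(PP'))$-configuration $\mathcal J_2 \subseteq [X/(2PP'), 2X/(PP')] \times \T^d$, each of whose elements $(z, \vec\alpha_z)$ is connected through $\gg \eta^{O(1)} P'/\log P'$ primes $p' \in [P', 2P']$ to elements $(y, \vec\alpha_y) \in \mathcal J_1$ by $(p')^j q_z \alpha_y^{(j)} \equiv \alpha_z^{(j)} + O(\eta^{-O(1)}(PP'/H)^j) \mod{1}$.

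Composing the two layers produces chains $(x, p, y, p', z) \in \mathcal J \times [P,2P] \times \mathcal J_1 \times [P',2P'] \times \mathcal J_2$. Double-counting the chains against the comparatively small size of $\mathcal J_2$ and applying pigeonhole yields, for $\gg \eta^{O(1)} (P/\log P)^2 (X/H)$ pairs of chains, a common terminal $(z, \vec\alpha_z) \in \mathcal J_2$ and, after a further pigeonholing, a common intermediate prime $p' \in [P', 2P']$. Subtracting the compounded $\mod{1}$ congruences for the two chains eliminates $\alpha_z^{(j)}$, and the resulting identity carries a common factor of $(p')^j$ on both sides; stripping this factor in the $\mod{p'}$ sense (the key observation of \cite[Proposition 3.2]{MRT20}) delivers $p^j \alpha_{x_1}^{(j)} \equiv q^j \alpha_{x_2}^{(j)} + O(\eta^{-O(1)}(P/H)^j) \mod{p'}$; varying $p'$ over the $\gg \eta^{O(1)} P'/\log P'$ admissible primes supplies the required quantifier. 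The geometric bound $|x_1/p - x_2/q| \ll H/P$ follows from $|py_i - x_i| \leq 2H$ combined with $|p'z - y_i| \leq 2H/P$, and disjoint subsets $\mathcal P_1, \mathcal P_2 \subseteq [P, 2P]$ are arranged by a final dyadic splitting of $[P, 2P]$ together with an extra $\eta^{O(1)}$-pigeonholing that directs the two chains into different halves.

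The main technical obstacle is the passage from the $\mod{1}$ information produced by two layers of scaling-down to a genuine $\mod{p'}$ congruence: this is the upgrade performed in \cite[Proposition 3.2]{MRT20} and demands that the error terms remain compatible with the $(p')^j$-stripping throughout the several pigeonholing steps. A secondary difficulty is reapplying Lemma \ref{4-1}'s Ramar\'e extraction to $\mathcal J_1$, since its correlations carry the weight $(\pq(n)+1)^{-1}$ inherited from the first extraction; checking that a second Ramar\'e identity composes cleanly with this weight --- and that the two secondary prime ranges $(Q_0, Q]$ and $(Q_0'', Q'']$ remain disjoint so as to avoid double counting --- requires care but parallels the bookkeeping already carried out in the proof of Lemma \ref{4-1}.
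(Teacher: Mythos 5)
Your outline follows the paper's skeleton: discretize and extract a small prime via Lemma \ref{4-1}, scale down twice via Proposition \ref{scale-down}, double-count to find many pairs of chains sharing a terminal in $\mathcal J_2$, and then eliminate the ghost frequency $\alpha_z$ to obtain a congruence between original frequencies. But the decisive step --- the passage from a mod $1$ congruence to a mod $p'$ congruence --- is not actually carried out, and as described it would not yield the proposition.

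If one has $p'^j D \equiv O(\epsilon^j) \mod 1$ with $D = p^j\alpha_{x_1}^{(j)} - q^j\alpha_{x_2}^{(j)}$, then ``stripping the factor $(p')^j$'' gives only that $D$ is within $O(\epsilon^j/p'^j)$ of some rational $m/p'^j$ with $0\leq m<p'^j$. That is information about $D$ near the points of $\frac{1}{p'^j}\Z$, not the statement that $D$ is within that error of an integer multiple of $p'$, which is what ``$\equiv\cdots\mod{p'}$'' means for real frequencies in this paper. The paper obtains the mod $p'$ statement by a three-step maneuver that your proposal does not reproduce: (a) the ghost frequency $\alpha_z^{(j)}$ is shifted by an integer, chosen by the Chinese Remainder Theorem over the admissible primes $p'$, so that the $\alpha_z$--$\alpha_y$ relation is upgraded from modulus $1$ to modulus $q'p'^j$; (b) substituting in the $\mathcal J_1$-to-$\mathcal J$ relation, subtracting the two chains through the same $(z,\vec\alpha_z,p')$ and \emph{dividing out} $q'p'^j$ gives a genuine mod $1$ congruence $p_1^j\alpha_{x_1}^{(j)}\equiv p_2^j\alpha_{x_2}^{(j)}+O(\eta^{-O(1)}(P/H)^j)\mod 1$; (c) $\alpha_{x_1}^{(j)}$ is then shifted by a further integer, again chosen by CRT over the $p'$ and exploiting $\gcd(p_1,p')=1$, to upgrade that to a mod $p'$ statement. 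Step (c) is only legitimate because replacing $\alpha_{x_1}^{(j)}$ by $\alpha_{x_1}^{(j)}+m$ with $m\in\Z$ leaves $e(\alpha_{x_1}^{(j)}(n-x_0)^j)$ unchanged, so the local correlation in conclusion (2) is preserved; your proposal never invokes this invariance of the exponential sum under integer shifts, nor the CRT, both of which are essential.

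A secondary issue is the attribution to \cite[Proposition 3.2]{MRT20}. The paper's introduction explicitly states that it does \emph{not} follow that argument: \cite{MRT20} produces mod $p'$ relations between the scaled-down frequencies $\gamma_z$, which are unusable here because the weight $\omega_{(Q_0,Q]}(n)+1$ introduced by Ramar\'e's identity at the scaled-down scale obstructs the subsequent major-arc estimate. The whole point of this proposition is to land the mod $p'$ relations on the \emph{original} frequencies $\alpha_x\in\mathcal J$, so that Proposition \ref{major-arc-approx} can later be applied at the original scale $[X,2X]$, and that is what forces the upgrade-and-divide maneuver rather than a direct import of the \cite{MRT20} mechanism. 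Your observation that a second Ramar\'e extraction is needed on $\mathcal J_1$, and your concern about the inherited weight $(\pq(n)+1)^{-1}$, are both correct and correspond to bookkeeping the paper carries out implicitly.
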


\begin{proof}
We aim to demonstrate that the configuration $\mathcal{J}$ we established in Lemma \ref{4-1} satisfies the two conditons of this proposition. Drawing from the proof of Lemma \ref{4-1}, where it is established that $\mathcal{J}$ is a subset of $\mathcal{J}_{-1}$, the second conclusion immediately follows from Lemma \ref{discretization} (2). Let's now focus on proving the first conclusion. By observing that $[H^{\omega^2}, H^\omega] \cap [(H/P)^{\omega^3}, (H/P)^{\omega^2}] = \emptyset$, one can employ the scaling-down step (Proposition \ref{scale-down}) twice in succession to conclude that---there is some $P'\in[\bigbrac{\frac{H}{P}}^{\omega^3},\bigbrac{\frac{H}{P}}^{\omega^2}]$, a $(c\eta^{O(1)},\frac{H}{PP'})$-configuration $\mathcal J_2\subset[\frac{X}{4PP'},\frac{4X}{PP'}]\times\T^d$ satisfying that for  each $(z,\vec\alpha_z)\in\mathcal J_2$ there is an integer $0<q_z\ll\eta^{-O(1)}$ and $\gg\eta^{O(1)}\frac{P'}{\log P'}$ of $(p',(y,\vec\alpha_y))\in[P',2P']\times\mathcal J_1$ such that
\[
|p'z-y|\ll\frac{H}{P},\qquad\alpha_z^{(j)} \equiv p'^jq_zq_y^{-1}\alpha_y^{(j)}+O(\eta^{-O(1)}(PP'/H)^j)\mod 1\qquad (1\leq j\leq d),
\]
where $1\leq q_y\ll\eta^{-O(1)}$ is the integer in Proposition \ref{scale-down}. The application of the pigeonhole principle reveals that there is a popular modulus $q'\ll\eta^{-O(1)}$ such that $q'=q_z$ for $\gg\eta^{O(1)}|\mathcal J_2|$ of $(z,\vec\alpha_z)$. We can narrow down $\mathcal{J}_2$ to the subset of elements $(z, \vec{\alpha}_z)$ where the corresponding $q_z$ equals this popular $q'$. Fixing the degree $1\leq j\leq d$, we first upgrade the above congruent equation to modulus $q'p'^j$ by adding an integer with an appropriate residue class mudulus $q'p^j$ to $\alpha_z^{(j)}$ and renaming the new frequency as $\alpha_z^{(j)}$  to obtain that
\[
\alpha_z^{(j)} \equiv q'p'^jq_y^{-1}\alpha_y^{(j)}+O(\eta^{-O(1)}(PP'/H)^j)\mod{q'p'^j}.
\]
By the Chinese remainder theorem we can select some $\alpha_z^{(j)}$ such that the above congruent equation holds for $\gg\eta^{O(1)}P'/\log P'$  primes $p'\in[P',2P']$.
 Next, we combine this result with the second conclusion of Proposition \ref{scale-down} to obtain  an integer $1\leq q'\ll\eta^{-O(1)}$ such that the following statement holds. For each $(z,\vec\alpha_z)\in\mathcal J_2$ there are $\gg\eta^{O(1)}\frac{P}{\log P}\frac{P'}{\log P'}$ of triples $(p,p',(x,\vec\alpha_x)) \in [P,2P]\times [P',2P']\times \mathcal J$ such that
\[
|z-\frac{x}{pp'}|\ll\frac{H}{PP'}
\]
and
\[
\alpha_z^{(j)}\equiv q'p'^jp^j\alpha_x^{(j)} + O(\eta^{-O(1)}(PP'/H)^j) \mod {q'p'^j}\qquad (1\leq j\leq d).
\]
Using $(*)$ to denote the above relations hold. It follows from  Cauchy-Schwarz inequality that
\begin{multline*}
	\eta^{O(1)}(\frac{P}{\log P})^2(\frac{P'}{\log P'})^2(\frac{X}{H})^2\ll\Bigbrac{\sum_{(z,\vec\alpha_z)\in\mathcal J_2}\sum_{(x,\vec\alpha_x)\in\mathcal J}\sum_{p\sim P}\sum_{p'\sim P'}1_{(*)}(p,p',(x,\vec\alpha_x),(z,\vec\alpha_z))}^2\\
	\ll\frac{X}{H}\frac{P'}{\log P'}\sum_{(z,\vec\alpha_z)\in\mathcal J_2}\sum_{p'\sim P'}\sum_{p_1\sim P\atop p_2\sim P}\sum_{(x_1,\vec\alpha_{x_1})\in \mathcal J\atop(x_2,\vec\alpha_{x_2})\in \mathcal J} 1_{(*)}(p_1,p',(x_1,\vec\alpha_{x_1}),(z,\vec\alpha_{z}))1_{(*)}(p_2,p',(x_2,\vec\alpha_{x_2}),(z,\vec\alpha_{z})),
\end{multline*}
which means that, there are $\gg\eta^{O(1)}(P/\log P)^2(P'\log P')(X/H)$ of sextuples $(p',p_1,p_2,(z,\vec\alpha_z),(x_1,\vec\alpha_{x_1}),(x_2,\vec\alpha_{x_2}))\in[P',2P']\times[P,2P]^2\times\mathcal J_{2}\times\mathcal J^2$ such that
\[
|z-\frac{x_1}{p_1p'}|\ll H/(PP'), \qquad |z-\frac{x_2}{p_2p'}|\ll H/(PP')
\]
and for every degree $1\leq j\leq d$ we also have
\begin{align*}
&\alpha_{z}^{(j)} \equiv q' p'^jp_1^j\alpha_{x_1}^{(j)}+O(\eta^{-O(1)}(PP'/H)^j)\mod{q'p'^j}\\
& \alpha_{z}^{(j)} \equiv q' p'^jp_2^j\alpha_{x_2}^{(j)}+O(\eta^{-O(1)}(PP'/H)^j)\mod{q'p'^j}.	
\end{align*}
Hence, by combining the above two congruence equations together and subsequently  dividing both sides  by $q'p'^j$, we obtain that
\[
p_1^j\alpha_{x_1}^{(j)}\equiv p_2^j\alpha_{x_2}^{(j)} +O(\eta^{-O(1)}(P/H)^j) \mod 1.
\]
It is possible to upgrade this congruent equation to modulus $p'$ by adding a suitable integer with appropriate residue class mudulus $p'$ to $\alpha_{x_1}^{(j)}$ to get that
\[
p_1^j\alpha_{x_1}^{(j)}\equiv p_2^j\alpha_{x_2}^{(j)} +O(\eta^{-O(1)}(P/H)^j) \mod{p'},
\] 
without affecting the local correlation
\[
\bigabs{\sum_{x_1<n\leq x_1+H}(1_S(n)\tau_k(n)-\tau_k^*(n))e\bigbrac{\sum_{1\leq j\leq d}\alpha_{x_1}^{(j)}(n-x_0)^j}}\gg\eta H\log^{k-1}X.
\]
We then apply Chinese remainder theorem to conclude that this congruent equation holds for $\gg\eta^{O(1)}P'/\log P'$ primes $p'\in[P',2P']$. The conclusion follows by observing that we can apply the pigeonhole principle to restrict $p,q\in[P,2P]$ into two disjoint dense  (up to a $O(\eta^{O(1)})$ factor) subsets $\po$ and $\pt$. 
\end{proof}


\section{Long disjoint pair of paths}

In this section, we are going to use the frequency relations established in Section 2.2 to construct long split paths. In what follows are the recordings of \cite[Definitions 3.6 and 3.8]{Wal23-1}.

\begin{definition}[Split path]\label{path}
Let $Q\in\N$, $\mathcal J^\#\subseteq \mathcal J$	and $\mathcal P_1,\mathcal P_2\subset[P,2P]$ are disjoint subsets. We say $(x_1,\vec\alpha_1)$ and $(x_{k+1}\vec\alpha_{k+1})\in\mathcal J^\#$ are \textit{connected by a split path $\mod Q$ of length $k$ in $\mathcal J^\#$} if there are primes $p_1,\dots,p_k\in\mathcal P_1, q_1,\dots,q_k\in\mathcal P_2$ and $(x_2,\vec\alpha_2),\dots,(x_k,\vec\alpha_k)\in\mathcal J^\#$ such that 
\[
|\frac{x_i}{p_i}-\frac{x_{i+1}}{q_i}|\ll\frac{H}{P} \quad\text{ and }\quad p_i^j\alpha^{(j)}_i \equiv q_i^j\alpha_{i+1}^{(j)}+O(\eta^{-O(1)}(P/H)^j)\mod Q\qquad (1\leq j\leq d).
\]
We use $l$ to denote a path and  the $3k+1$-tuple $(p_1,\dots,p_k,q_1,\dots,q_k,(x_1,\vec\alpha_1),\dots,(x_{k+1},\vec\alpha_{k+1}))$  indeed determine a path by the above two inequalities. We call $(x_1,\vec\alpha_1)$ the  \textit{starting point} and $(x_{k+1},\vec\alpha_{k+1})$ the \textit{endpoint} of this path. If $l_1$ and $l_2$ are two split paths $\mod Q$ of length $k$ connecting $(x,\vec\alpha)$ and $(y,\vec\beta)$ and sharing no prime in common, we say they are \textit{disjoint}.
\end{definition}

\begin{definition}[Regularity]\label{regular}
Let $Q\in\N$. We say $\mathcal J^\#\subseteq \mathcal J$ is 
\textit{$(c,Q)$-regular} if $|\mathcal J^\#|\geq c\eta^{O(1)}|\mathcal J|$ and for every $(x,\vec\alpha)\in\mathcal J^\#$ we can find $\gg\eta^{O(1)}(\frac{P}{\log P})^2$ of $(p,q,(y,\vec\beta)) \in \mathcal P_1 \times \mathcal P_2 \times \mathcal J^\#$ such that $(p,q,(x,\vec\alpha),(y,\vec\beta))$ forms a split path $\mod Q$ of length 1 in $\mathcal J^\#$.
	
\end{definition}

In view of Proposition \ref{connected},  for $\gg\eta^{O(1)}$ proportion of primes $p'\in[P',2P']$, we can find a $(c,p')$-regular subset of $\mathcal J$, and this is indeed \cite[Lemma 3.9]{Wal23-1}.

\begin{lemma}\label{regular-p}
Suppose that	 $P'$ and $\mathcal J$ are as in Proposition \ref{connected}, then for $\gg\eta^{O(1)}P'/\log P'$  primes $p'\in[P',2P']$ we can find a $(c,p')$-regular subset $\mathcal J_{p'}$ of $\mathcal J$.
\end{lemma}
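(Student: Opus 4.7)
The plan is to deduce Lemma \ref{regular-p} from Proposition \ref{connected} by first isolating the good primes $p'$ via a Fubini/pigeonhole argument, and then extracting the regular subset $\mathcal J_{p'}$ through the iterative pruning technique of \cite[Lemma 3.9]{Wal23-1}.

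First I will unpack the quantified conclusion of Proposition \ref{connected}(1). For each prime $p' \in [P', 2P']$ let $E_{p'}$ denote the set of quadruples $(p,q,(x_1,\vec\alpha_{x_1}),(x_2,\vec\alpha_{x_2})) \in \po \times \pt \times \mathcal J^2$ satisfying the spatial condition $|x_1/p - x_2/q| \ll H/P$ together with the degreewise congruence $p^j\alpha_{x_1}^{(j)} \equiv q^j\alpha_{x_2}^{(j)} + O(\eta^{-O(1)}(P/H)^j) \pmod{p'}$. Proposition \ref{connected}(1) can then be rewritten as the lower bound
\[
\sum_{p' \in [P',2P']} |E_{p'}| \gg \eta^{O(1)} (P/\log P)^2 (P'/\log P')(X/H).
\]
Since the total number of quadruples is trivially bounded by $O((P/\log P)^2 (X/H))$, a standard averaging/pigeonhole step produces $\gg \eta^{O(1)} P'/\log P'$ primes $p' \in [P', 2P']$ for which $|E_{p'}| \gg \eta^{O(1)} (P/\log P)^2 (X/H)$; I will restrict attention to such $p'$ for the remainder of the argument.

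For each such $p'$ I view $E_{p'}$ as the edge set of a directed graph $G_{p'}$ on the vertex set $\mathcal J$. The crucial structural input is that $G_{p'}$ has maximum in- and out-degree $O((P/\log P)^2)$: fixing any vertex $(x,\vec\alpha) \in \mathcal J$ and any ordered prime pair $(p,q) \in \po \times \pt$, the spatial constraint together with the $H$-separation of the first coordinates of $\mathcal J$ leaves only $O(1)$ candidates for the other endpoint, and these extend to at most $O(|\po||\pt|)$ choices in total. Consequently the average out-degree of $G_{p'}$ is $\gg \eta^{O(1)} (P/\log P)^2$, matching the regularity threshold in Definition \ref{regular}.

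The subset $\mathcal J_{p'}$ is then produced by iterative pruning, following the template of \cite[Lemma 3.9]{Wal23-1}: start with $\mathcal J^{(0)} = \mathcal J$ and, as long as there exists a vertex $v$ with $d^+_{\mathcal J^{(t)}}(v) < c\eta^{O(1)}(P/\log P)^2$, remove $v$; stop when no such vertex remains, and set $\mathcal J_{p'}$ to the terminal set. By construction every $v \in \mathcal J_{p'}$ has out-degree in $\mathcal J_{p'}$ at least the threshold, so $\mathcal J_{p'}$ is automatically regular in the sense of Definition \ref{regular} provided the size bound $|\mathcal J_{p'}| \geq c\eta^{O(1)}|\mathcal J|$ holds. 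The hard part is precisely this size bound: I expect to extract it by carefully accounting the edges destroyed throughout the pruning (each removal kills at most $\theta$ out-edges by choice of the threshold, and at most $O((P/\log P)^2)$ in-edges by the degree bound) against the initial edge budget $|E_{p'}|$, exactly as in \cite[Proposition 3.2]{MRT20} and \cite[Lemma 3.9]{Wal23-1}; this combinatorial balancing is the main obstacle. Once carried out, letting $p'$ vary over the admissible primes extracted in the first step completes the proof.
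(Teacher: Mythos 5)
The paper itself gives no proof of this lemma: the sentence immediately preceding the statement reads ``In view of Proposition \ref{connected}, for $\gg\eta^{O(1)}$ proportion of primes $p'\in[P',2P']$, we can find a $(c,p')$-regular subset of $\mathcal J$, and this is indeed \cite[Lemma 3.9]{Wal23-1}'' --- i.e.\ the lemma is outsourced entirely to Walsh. So your attempt and the paper are taking the same route (reduce Proposition~\ref{connected}(1) to the hypotheses of Walsh's regularity lemma), and your unpacking is a useful addition. In particular, your first step is correct: writing Proposition~\ref{connected}(1) as the double-count $\sum_{p'}|E_{p'}|\gg\eta^{O(1)}(P/\log P)^2(P'/\log P')(X/H)$, then noting the trivial bound $|E_{p'}|\ll(P/\log P)^2(X/H)$ (which crucially uses the spatial condition $|x_1/p-x_2/q|\ll H/P$ together with the $H$-separation of the first coordinates of $\mathcal J$ to pin down $x_2$ once $(p,q,x_1)$ is fixed), and pigeonholing, does produce $\gg\eta^{O(1)}P'/\log P'$ primes $p'$ with $|E_{p'}|\gg\eta^{O(1)}(P/\log P)^2(X/H)$.

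The place where I would push back is your sketch of the pruning accounting. You write that each removal kills at most $\theta$ out-edges and at most $O((P/\log P)^2)$ in-edges, and that balancing this against the initial edge budget closes the argument. It does not as stated: the in-edge loss per removal is of order $\Delta:=O((P/\log P)^2)$, which is a factor $\eta^{-O(1)}$ larger than any sensible threshold $\theta\ll\eta^{O(1)}\Delta$, so the per-removal budget is $\theta+\Delta\asymp\Delta$, and the total destroyed over all $n$ possible removals can be $\asymp\Delta n$, which is larger than the initial budget $|E_{p'}|\asymp\eta^{O(1)}\Delta n$. In other words the naive out-degree pruning on a directed graph can in principle empty the whole vertex set without any contradiction, precisely because the out-degree threshold gives no control on the in-degree of the vertex being removed. (The standard clean accounting --- total edges destroyed $<\theta\cdot(\text{number of removals})\leq\theta n<|E|/2$, hence the surviving graph has $\geq|E|/2$ edges and $\geq|E|/(2\Delta)$ vertices --- works for \emph{undirected} pruning, but the split-path relation here is not symmetric since $p\in\po$ and $q\in\pt$ play distinguished roles, so the out-degree regularity of Definition~\ref{regular} does not follow from undirected degree pruning.) You do flag this as ``the main obstacle'' and defer to the cited works, which is honest and matches what the paper does; but the specific accounting you sketch is not the one that closes the lemma, and a reader would be misled into thinking it is a routine edge count. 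The correct extraction is exactly \cite[Lemma 3.9]{Wal23-1}, and I would simply cite it rather than attempt to reconstitute the combinatorics, or else carry out Walsh's actual bookkeeping in full.
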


The next result is the main conclusion of this section, which denominates that there is some $(x,\vec\alpha)\in \mathcal J$ and a large subset $\mathcal J^\#$ of $\mathcal J$ such that for each $(y,\vec\beta)\in\mathcal J^\#$ there are two disjoint reasonable long split paths connecting $(x,\vec\alpha)$ and $(y,\vec\beta)$. The proof essentially follows from  \cite[Lemma 5.4]{Wal23-1}, since we show the dependence of $\eta$ here  (which was not obvious in \cite{Wal23-1}) and the major restriction on $H$ will be revealed, we'd like to provide a detailed proof.

\begin{proposition}[Many pairs of disjoint split paths produced]\label{disjoint-paths}
Let $0<\sigma<1$ be a fixed number and $k_0=\floor{\frac{\log(X/(AH\log X))}{2\log(2P)}}$ for some sufficiently large constant $1\leq A\ll1$. Suppose that $0<\omega<1$ is defined in Section 4 that is sufficiently small with respect to $\sigma$, and $\exp\bigbrac{C(\log X)^{1/2}(\log\log X)^{1/2}}\leq H\leq X^{1/3-\eps}$ with $C>0$ sufficiently large with respect to $\sigma$ and $\omega$. Then there is an element $(x_0,\vec\alpha_0)\in \mathcal J$, a subset $\mathcal J^\#\subseteq \mathcal J$ with cardinality $\#\mathcal J^\#\gg X/H^{1+\sigma}$ such that the following holds.

For every $(y,\vec\beta)\in\mathcal J^\#$ there are two disjoint split paths $\mod{Q_y}$ of length $k_0+2$ connecting $(x,\vec\alpha)$ and $(y,\vec\beta)$, where $Q_y$  is  a product of $\gg\eta^{O(k_0)}\frac{P'}{\log P'}$ of primes $p'\in[P',2P']$.
	
\end{proposition}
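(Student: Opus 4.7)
The plan is to iteratively build the two disjoint paths step by step, using the regularity supplied by Lemma \ref{regular-p}. First I would extract from that lemma a popular family of primes $\mathcal Q \subseteq [P', 2P']$ with $|\mathcal Q| \gg \eta^{O(1)}P'/\log P'$, together with a common dense subset $\mathcal J^\star \subseteq \mathcal J$ that is $(c,p')$-regular for each $p' \in \mathcal Q$; this is obtained by pigeonholing over the regular subsets $\mathcal J_{p'}$ and elements of $\mathcal J$.

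Given a starting point $(x_0, \vec\alpha_0) \in \mathcal J^\star$, I build the tree of disjoint path-pairs by parallel extension. At each extension step the regularity provides $\gg \eta^{O(1)}(P/\log P)^2$ one-step extensions of the current endpoint; I reject those whose primes $p$ or $q$ already appear in either of the two partial paths being built (at most $O(k_0)$ forbidden primes, negligible against $P/\log P$ since $P \geq H^{\omega^2}$ and $k_0 \ll \log X$), and I record the set of primes $p' \in \mathcal Q$ for which the congruence holds at this extension. After $k_0 + 2$ iterations the number of ordered pairs of disjoint length-$(k_0 + 2)$ paths from the starting point is $\gg \eta^{O(k_0)}(P/\log P)^{2(k_0+2)}$, and the intersection of the recorded prime-sets over all $k_0 + 2$ steps still has size $\gg \eta^{O(k_0)}|\mathcal Q|$; taking $Q_y$ to be the product over this intersection gives a modulus of the required form under which all congruences along both paths hold simultaneously.

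The starting point $(x_0, \vec\alpha_0)$ and subset $\mathcal J^\# \subseteq \mathcal J$ are then extracted by a double-count and pigeonhole. Summing the above lower bound over starts in $\mathcal J^\star$ yields $\gg \eta^{O(k_0)}(X/H)(P/\log P)^{2(k_0+2)}$ pairs of disjoint paths in total; an upper bound on the number of such pairs ending at a fixed $y$ from a fixed start is obtained from the approximate identity $y \approx x_0 \prod_i q_i/p_i$ together with the $H$-configuration property of $\mathcal J$ (and the calibration of $k_0$ so that $X/(2P)^{2k_0} \asymp H\log X$). Comparing these two counts produces a single start from which $\gg X/H^{1+\sigma}$ distinct endpoints are reached by pairs of disjoint paths.

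The main obstacle is controlling the cumulative loss $\eta^{O(k_0)}$ accumulated over $k_0 \asymp \log(X/(H\log X))/\log(H^{\omega^2})$ iterations: for this loss not to destroy the final bound $X/H^{1+\sigma}$, one essentially needs $k_0 \log(1/\eta) \ll \sigma \log H$, which when combined with the expression for $k_0$ forces $\log H \geq C(\log X \log\log X)^{1/2}$ for $C$ sufficiently large in terms of $\sigma$ and $\omega$. This is precisely the hypothesis, and it is the step where the quantitative threshold on $H$ in the proposition is determined.
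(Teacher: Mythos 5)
Your outline captures the right skeleton (regularity from Lemma~\ref{regular-p}, iterating to length $k_0+2$, counting endpoints via $y\approx x_0\prod q_i/p_i$, and identifying the cumulative $\eta^{O(k_0)}$ loss as the source of the threshold $\log H\gg(\log X\log\log X)^{1/2}$), but two of the middle steps do not work as written.

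First, the opening reduction is too strong: pigeonholing over the family $\set{\mathcal J_{p'}}_{p'}$ does not yield a single dense subset $\mathcal J^\star\subseteq\mathcal J$ that is $(c,p')$-regular for every $p'$ in a large set $\mathcal Q$. The sets $\mathcal J_{p'}$ can be quite different from one another, and ``many elements lie in many $\mathcal J_{p'}$'' is weaker than ``there is one set that is simultaneously regular for many $p'$''. The paper only extracts a single element $(x_0,\vec\alpha_0)$ lying in $\gg\eta^{O(1)}P'/\log P'$ of the $\mathcal J_{p'}$, then works one $p'$ at a time; the passage from ``paths $\mod{p'}$'' to ``paths $\mod{Q_y}$'' is done by noting that a split path is determined $\mod 1$ by its prime tuple, so that a path which is a path $\mod{p'}$ for many $p'$ can be promoted to a path modulo the product.

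Second, and more importantly, the ``parallel extension with rejection'' never explains why the two paths being grown should arrive at a common endpoint $(y,\vec\beta)$. If you extend two paths independently at each step, the probability they coincide at the end is tiny, and indeed your claimed count $\gg\eta^{O(k_0)}(P/\log P)^{2(k_0+2)}$ for ordered pairs does not match the $4(k_0+2)$ prime choices a genuine pair requires. The paper resolves this by first counting all length-$(k_0+2)$ paths from $(x_0,\vec\alpha_0)$, grouping them by endpoint into $\mathcal R_y$, and applying Cauchy--Schwarz twice (once in $l\in\mathcal R_y$ and once in $(y,\vec\beta)\in\mathcal J$) to show that $\gg\eta^{O(k_0)}(P/\log P)^{4k_0+8}(H/X)$ pairs share an endpoint and are paths $\mod{p'}$ for many $p'$ simultaneously. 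Only then does it discard the non-disjoint pairs, and for that a quantitative input is genuinely needed --- namely \cite[Corollary 5.3]{Wal23-1}, which bounds the number of non-disjoint pairs by a quantity $\ll(\frac{P}{\log P})^{4k_0+8}\frac{H}{X}\cdot\frac{\log^{4k_0+7}P}{P}k_02^{4k_0+7}(2k_0+4)!^2$; rejecting repeated primes on the fly does not substitute for this because, again, you would also have to enforce the shared endpoint at the same time. Your final calibration of $k_0$ and $H$ is correct, but it sits on top of these two unrepaired gaps.
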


\begin{proof}
Recording what we mean by the regularity in Definition \ref{regular}, Lemma \ref{regular-p} shows that
\[
\eta^{O(1)}\frac{P'}{\log P'}\frac{X}{H}
\ll\sum_{p'\sim P'} \#\mathcal J_{p'}
\ll\sum_{(x,\vec\alpha)\in\mathcal J}\sum_{p'\sim P'}1_{(x,\vec\alpha)\in\mathcal J_{p'}},
\]
one can see, from the pigeonhole principle, that there is some $(x,\vec\alpha)\in\mathcal J$ that is also in $\gg\eta^{O(1)}\frac{P'}{\log P'}$ of regular $\mathcal J_{p'}$. This $(x,\vec\alpha)$ is the $(x_0,\vec\alpha_0)$ described in the statement.

Fix a prime $p'\in[P',2P']$ for which $\mathcal J_{p'}$ is regular and  fix the above $(x,\vec\alpha)\in\mathcal J_{p'}$. By the regularity of $\mathcal J_{p'}$ (see Definition \ref{regular}),  there would be $\gg\eta^{O(k_0)}(\frac{P}{\log P})^{2k_0+4}$ split paths $\mod{p'}$ of length $k_0+2$ with  starting point $(x,\vec\alpha)$ in $\mathcal J_{p'}$. As for any given $2k_0+4$-tuple ordered primes $(p_1,\dots,q_{k_0+2})$, starting with $(x,\vec\alpha)$, there is an unique  split path $\mod 1$ of length $k_0+2$, hence, there are in total $O((\frac{P}{\log P})^{2k_0+4})$ split path $\mod 1$ of length $k_0+2$ with starting point $(x,\vec\alpha)$ in $\mathcal J$. Consequently, we can suppose that there are $\gg\eta^{O(k_0)}(\frac{P}{\log P})^{2k_0+4}$ split path $\mod 1$ of length $k_0+2$ with starting point $(x,\vec\alpha)$ in $\mathcal J$ that are also split paths $\mod{p'}$ for $\gg\eta^{O(k_0)}\frac{P'}{\log P'}$  primes $p'\in[P',2P']$. Write $\mathcal R$ for this set of paths.

Let $\mathcal R_y=\set{l\in\mathcal R: l\text{ has endpoint }(y,\vec\beta)}$. On recalling that for each $l\in\mathcal R_y$ there are $\gg\eta^{O(k_0)}\frac{P'}{\log P'}$  primes $p'\in[P',2P']$ such that $l\mod{p'}$ is also a path in $\mathcal J_{p'}$, we may use Cauchy-Schwarz inequality to produce pair of paths with the same endpoint $(y,\vec\beta)$ as follows.
\[
\eta^{O(k_0)} (\frac{P'}{\log P'})^2 |\mathcal R_y|^2\ll\Bigbrac{\sum_{l\in\mathcal R_y}\sum_{p'\sim P'}1_{l\mod{p'}}}^2\ll\frac{P'}{\log P'}\sum_{p'\sim P'}\sum_{l_1,l_2\in\mathcal R_y} 1_{l_1\mod{p'}} 1_{l_2\mod{p'}},
\]
by mean of pigeonhole principle there are $\gg \eta^{O(k_0)} |\mathcal R_y|^2$ of pairs of split paths $(l_1,l_2)\in \mathcal R_y^2$ that are also split paths $\mod{p'}$ for $\gg\eta^{O(k_0)}\frac{P'}{\log P'}$  primes $p'\in[P',2P']$. We can further count the number of pairs of paths with the same endpoint in $\mathcal R$. In practice, from the assumption,
\begin{multline*}
\eta^{O(k_0)} (\frac{P'}{\log P'})^2 (\frac{P}{\log P})^{4k_0+8}\ll\Bigbrac{\sum_{(y,\vec\beta)\in\mathcal J}\sum_{l\in\mathcal R_y}\sum_{p'\sim P'}1_{l\mod{p'}}}^2\\
\ll\frac{X}{H}	\frac{P'}{\log P'}\sum_{p'\sim P'}\sum_{(y,\vec\beta)\in\mathcal J} \sum_{l_1,l_2\in\mathcal R_y}
1_{l_1\mod{p'}} 1_{l_2\mod{p'}},
\end{multline*}
which means that the number of pairs of split paths in $\mathcal R$ with the same endpoint and also being paths $\mod{p'}$ for $\gg\eta^{O(k_0)}\frac{P'}{\log P'}$  primes $p'\in[P',2P']$ is at least
\[
\gg\eta^{O(k_0)}(\frac{P}{\log P})^{4k_0+8}\frac{H}{X}.
\]
Write this set of pairs of paths as $\mathcal R_0$, then $\mathcal R_0\subseteq\mathcal R\times\mathcal R$ and $\#\mathcal R_0\gg\eta^{O(k_0)}(\frac{P}{\log P})^{4k_0+8}\frac{H}{X}$.

Next, we need to rule out those pairs $(l_1,l_2)\in\mathcal R_0$ that are not disjoint (i.e. sharing a prime in common from Definition \ref{path}). Thanks to \cite[Corollary 5.3]{Wal23-1} which shows that there are not many pairs of paths of reasonable length with the same starting point and endpoint that can share a prime in common, we can conclude by applying \cite[Corollary 5.3]{Wal23-1} with $k=k_0+2$ that
\begin{multline*}
\#\set{(l_1,l_2)\in\mathcal R_0: l_1\text{ and }l_2\text{ are not disjoint}}\\
\ll(\frac{P}{\log P})^{4k_0+8}\frac{H}{X}\frac{\log^{4k_0+7}P}{P}k_02^{4k_0+7}(2k_0+4)!^2
\ll\eta^{O(k_0)}(\frac{P}{\log P})^{4k_0+8}\frac{H}{X},
\end{multline*}
by our hypothesis on $\omega$ and $C$. Hence, there is $\mathcal R^*\subseteq \mathcal R_0$ with $|\mathcal R^*|\gg \eta^{O(k_0)}(\frac{P}{\log P})^{4k_0+8}\frac{H}{X}$
and for each pair $(l_1,l_2)\in\mathcal R^*$ they are disjoint paths $\mod{p'}$ for $\gg\eta^{k_0}\frac{P'}{\log P'}$  primes $p'\in[P',P']$ (so that they are paths $\mod Q$ with $Q$ as the product of $\gg\eta^{k_0}\frac{P'}{\log P'}$ of primes $p'\in[P',P']$), besides, $l_1$ and $l_2$ are of length $k_0+2$ and having starting point $(x,\vec\alpha)$ and having the same endpoint. It is thus sufficient to count how many endpoints of the paths within $\mathcal R^*$. Write $\mathcal J^\#$ as the set of endpoints of paths in $\mathcal R^*$.

Given an element $(y,\vec\beta)\in\mathcal J$ we now consider  how many split paths of length $k_0+2$ with starting point $(x,\vec\alpha)$ and endpoint $(y,\vec\beta)$ can be constructed in $\mathcal J$. Let $l$ be one of such paths and decompose it as $l=l_1+l_2$, where $l_1$ is the path of length $k_0$ with starting point $(x,\vec\alpha)$ and $l_2$ is the path of length $2$ with endpoint $(y,\vec\beta)$. As there are only $O((\frac{P}{\log P})^4)$ such $l_2$ and each such $l_2$ would also give $l_1$ a fixed endpoint. It follows from \cite[Lemma 5.1]{Wal23-1} that given the starting point and endpoint, there are only $O((2k_0)!)$ split paths of length $k_0$. Therefore, there are at most $O((2k_0)!(\frac{P}{\log P})^4)$ split paths of length $k_0+2$ with starting point $(x,\vec\alpha)$ and endpoint $(y,\vec\beta)$. As a consequence,
\[
|\mathcal J^\#|\gg|\mathcal R^*|((2k_0)!)^{-2}(\frac{\log P}{ P})^8\gg \frac{X}{H^{1+\sigma}}
\]
if $H\geq\exp\bigbrac{C(\log X)^{1/2}(\log\log X)^{1/2}}$.
	
\end{proof}


\section{Building a global frequency and proving the main theorem}

Let $(x_0,\vec\alpha_0)\in\mathcal J$ be the frequency in Proposition \ref{disjoint-paths}, for every $(y,\vec\beta)\in \mathcal J^\#$ we can build a split path of length $2k_0+4$ which goes through $(y,\vec\beta)$ and has $(x_0,\vec\alpha_0)$ as both its starting and ending point. By adapting the argument in Subsection 2.2, we can obtain some information on the frequencies $\vec\alpha_0$ and $\vec\beta$.

\begin{lemma}\label{major-frequence}
Suppose that $0<\sigma<1$, $(x_0,\vec\alpha_0)\in\mathcal J$ and $(y,\vec\beta)\in \mathcal J^\#$ are as in Proposition 	\ref{disjoint-paths}, and $Q_y$ is the product of $\gg\eta^{O(k_0)}\frac{P'}{\log P'}$ primes in $[P',2P']$. Then for every degree $1\leq j\leq d$ there is always some $T_{j,y}\in\R$ with $|T_{j,y}|\ll\frac{X^{j}}{H^{j-\sigma}}$ and some constant $0<c_{j,y}\ll1$ such that 
\[
\alpha_0^{(j)}\equiv \frac{a^{(j)}_y}{q^{(j)}_y}\cdot Q_y+\frac{T_{j,y}}{y^j}+O\bigbrac{H^{-j+\sigma}}\mod {Q_y}
\]
and
\[
\beta^{(j)}\equiv \frac{b^{(j)}_y}{q^{(j)}_y}\cdot Q_y+\frac{c_{j,y}T_{j,y}}{y^j}+O\bigbrac{H^{-j+\sigma}}\mod {Q_y},
\]
where $a^{(j)}_y,b^{(j)}_y,q^{(j)}_y\ll H^{\sigma/d}$ are integers.\end{lemma}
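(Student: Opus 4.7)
The plan is to apply the pyramid machinery of Lemma \ref{top-element} to each of the two disjoint split paths $l_1, l_2$ of length $k_0+2$ supplied by Proposition \ref{disjoint-paths}, then eliminate the resulting top elements and invoke Dirichlet's approximation theorem to peel off the rational part of $\alpha_0^{(j)}$ modulo $Q_y$, with the claimed smooth residue $T_{j,y}/y^j$ absorbing the remainder and $\beta$ obtaining the parallel decomposition via the path relation. Writing $P_t = \prod_i p_{t,i}$ and $Q_t = \prod_i q_{t,i}$ for the products of primes along path $l_t$, the path conditions $|x_i/p_i - x_{i+1}/q_i| \ll H/P$ telescope to give $Q_t/P_t = y/x_0 \cdot (1 + o(1))$, while Lemma \ref{top-element} produces a top element $\vec\gamma_t \in \T^d$ satisfying, for every $1 \leq j \leq d$, the pair of congruences $Q_t^j \gamma_t^{(j)} \equiv \alpha_0^{(j)} + O(k_0 \eta^{-O(1)}(P/H)^j)\mod{Q_y}$ and $P_t^j \gamma_t^{(j)} \equiv \beta^{(j)} + O(k_0 \eta^{-O(1)}(P/H)^j)\mod{Q_y}$.

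Eliminating $\vec\gamma_t$ within each path yields $P_t^j \alpha_0^{(j)} \equiv Q_t^j \beta^{(j)} + O(P_t^j k_0 \eta^{-O(1)}(P/H)^j)\mod{Q_y}$ for $t = 1, 2$; cross-multiplying the two path relations and subtracting then produces the Diophantine constraint
\[
M_j \alpha_0^{(j)} \equiv O\bigbrac{k_0\eta^{-O(1)}P_1^jP_2^jQ_1^j(P/H)^j}\mod{Q_y},
\]
where $M_j := P_1^j Q_2^j - P_2^j Q_1^j$ is nonzero by disjointness of the two paths and has size $|M_j| \ll P_1^j P_2^j \cdot o(1)$ via the near-equality $Q_t/P_t \approx y/x_0$. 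With $k_0$ tuned so that $P^{k_0+2} \asymp P^2 \sqrt{X/H}$, the residual $\epsilon'/M_j$ is already $\ll H^{-j+\sigma}$, and applying Dirichlet's approximation theorem at scale $H^{\sigma/d}$ to the rational $K/M_j$ (coming from $\alpha_0^{(j)} \equiv K Q_y/M_j + \epsilon'/M_j \mod{Q_y}$) refines it to $a_y^{(j)}/q_y^{(j)}$ in lowest terms with $q_y^{(j)} \ll H^{\sigma/d}$; setting $T_{j,y} := y^j\bigbrac{(\alpha_0^{(j)}\bmod Q_y) - (a_y^{(j)}/q_y^{(j)})Q_y}$ gives $|T_{j,y}| \ll X^j/H^{j-\sigma}$ and leaves a residual $O(H^{-j+\sigma})$. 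Substituting this decomposition back into $P_t^j \alpha_0^{(j)} \equiv Q_t^j \beta^{(j)} \mod{Q_y}$ and solving for $\beta^{(j)}$ (using that $Q_t^j$ is invertible modulo $q_y^{(j)}$ since all primes of $Q_t$ lie in $[P, 2P]$ disjoint from $[P', 2P']$ and exceed $q_y^{(j)}$) produces the parallel form for $\beta^{(j)}$ with the same denominator $q_y^{(j)}$, numerator $b_y^{(j)} \equiv a_y^{(j)} P_t^j Q_t^{-j} \mod{q_y^{(j)}}$, and smooth coefficient $c_{j,y} = (P_t/Q_t)^j = (x_0/y)^j(1+o(1))$, which is positive and $\ll 1$.

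The hard part will be ensuring that the Dirichlet step produces a denominator of size $\ll H^{\sigma/d}$ rather than the crude bound $|M_j|$, and simultaneously keeping $|T_{j,y}| \ll X^j/H^{j-\sigma}$ with residual error $O(H^{-j+\sigma})$: tracking the compound error through a pyramid of length $k_0+2 \asymp \log(X/H)/\log H^\omega$ against the target $H^{-j+\sigma}$ is precisely what forces the threshold $H \geq \exp\bigbrac{C(\log X)^{1/2}(\log\log X)^{1/2}}$ with $C$ sufficiently large relative to $\sigma$ and $\omega$, since any weaker lower bound on $H$ would cause the pyramid-accumulated error to exceed the target $H^{-j+\sigma}$.
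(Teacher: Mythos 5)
Your overall plan of converting path relations into a Diophantine constraint on $\alpha_0^{(j)}$, and the identification $M_j = P_1^jQ_2^j - P_2^jQ_1^j$ (which equals, in the paper's notation for the combined circular path, $\prod_{i\le k}(p_iq_i')^j - \prod_{i\le k}(p_i'q_i)^j$), matches what actually drives the paper's proof. However, two steps in your execution break down, and both are points where the paper's argument does something crucially different.

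First, the Dirichlet approximation step cannot work. After reducing, you have $\alpha_0^{(j)} \equiv \frac{r}{M_j}\cdot Q_y + \frac{\epsilon'}{M_j} \mod {Q_y}$ for some integer $0\le r < M_j$. If you now replace $\frac{r}{M_j}$ by a Dirichlet approximant $\frac{a_y^{(j)}}{q_y^{(j)}}$ with $q_y^{(j)}\le N$, the resulting change in $\alpha_0^{(j)}$ is $\bigabs{\frac{r}{M_j}-\frac{a_y^{(j)}}{q_y^{(j)}}}\cdot Q_y\ll \frac{Q_y}{q_y^{(j)}N}$. The modulus $Q_y$ is a product of $\gg\eta^{O(k_0)}P'/\log P'$ primes each of size $\asymp P'$, so $Q_y\ge\exp(c\eta^{O(k_0)}P')$ is astronomically larger than any power of $H$; the Dirichlet error therefore cannot possibly be $\ll H^{-j+\sigma}$. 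The paper avoids this entirely: it never approximates. The denominator of $\frac{r}{M_j}$ in lowest terms is a divisor of $M_j$, so the small-denominator claim is reduced to showing $|M_j|\ll H^{\sigma/d}$ directly, which the paper argues from the bound $p_i,p_i',q_i,q_i'\ll H^\omega$ combined with the near-cancellation of the two products. Your ``the hard part will be ensuring the Dirichlet step produces a denominator $\ll H^{\sigma/d}$'' is a sign that the approach is wrong; there is no correct version of a Dirichlet step here.

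Second, ``substituting back into $P_t^j\alpha_0^{(j)}\equiv Q_t^j\beta^{(j)} \mod{Q_y}$ and solving for $\beta^{(j)}$'' requires division by $Q_t^j$ modulo $Q_y$. The multiplicative inverse of $Q_t^j\mod{Q_y}$ is an integer of size $\asymp Q_y$, and multiplying the $O(H^{-j+\sigma})$ error by this inverse destroys it. Multiplication by an integer preserves the $\mod Q_y$ error estimate; division does not. The paper's device for circumventing this is to build a \emph{single} pyramid on the circular path $(x_0,\vec\alpha_0)\to(y,\vec\beta)\to(x_0,\vec\alpha_0)$ of length $2(k_0+2)$, take its top element $\vec\alpha_y$, define $T_{j,y}$ via $\alpha_y^{(j)}$ at the apex, and then reach both $\alpha_0^{(j)}$ (via $i'=0$ or $i'=2k$) and $\beta^{(j)}$ (via $i'=k$) from $\vec\alpha_y$ \emph{by multiplication} by products of path primes. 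That is why the two decompositions share the same denominator $q_y^{(j)}$ and the same $T_{j,y}$ up to the bounded factor $c_{j,y}$: both are multiplicative images of one apex. Your two-pyramid-plus-elimination variant forgoes the apex and so is forced into the illegal division. Finally, your remark that you are ``tracking the compound error through a pyramid of length $k_0+2$'' shows you did not realize that the pyramid used must have length $2(k_0+2)$; without joining the two disjoint paths into one loop there is no apex from which to reach both endpoints multiplicatively.
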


\begin{proof}

In the proof we let $k=k_0+2$, where $k_0$ is the integer defined in Proposition \ref{disjoint-paths}. As mentioned earlier, given an element $(y,\vec\beta)\in\mathcal J_0$, one can construct, from Proposition \ref{disjoint-paths},  a split path modulo $Q_y$ of length $2k$ with both starting and ending points being $(x_0,\vec\alpha_0)$. Suppose that this path is associated with the prime tuple $(p_1,\dots,p_k,q_k',\dots,q_1';q_1,\dots,q_k,p_k',\dots,p_1')$, the path is as the  figure below.
\[\xymatrix@=8ex{
 {\vec \alpha_0 =\vec\alpha_{1,1}}  \ar@{-}[r]^{p_1}_{q_1} 
& \vec\alpha_{1,2} \ar@{-}[r]^{p_2}_{q_2} 
 & \cdots \ar@{-}[r]^{p_k}_{q_k}  
 &{\vec\alpha_{1,k+1}=\vec\beta}\ar@{-}[r]^{{q'_k}}_{{p'_k}}
 &\cdots \ar@{-}[r]^{{q'_1}}_{{p'_1}}
 &{\vec\alpha_{1,2k+1}=\vec \alpha_0}
 }
\]

Since $Q_y$ is the product of primes in the interval $[P',2P']$, it is evident from the choices of $P$ and $P'$ in Proposition \ref{connected} that $(p,Q_y)=1$ whenever $p\in[P,2P]$. Moreover, \cite[Lemma 4.1]{Wal23-1} demonstrates that $\prod_{i\leq m_1}\frac{p_i}{q_i}\prod_{i\leq m_2}\frac{p_i'}{q_i'}\asymp 1$ for $1\leq m_1,m_2\leq k$. Therefore, the conditions of Lemma \ref{top-element} are satisfied. 

Setting $\eps=\eta^{-O(1)}\frac{P}{H}$ in Lemma \ref{top-element} and assuming that $\vec\alpha_y$ is the top element, the application of Lemma \ref{top-element} with $i'=0$ and $i'=2k$, along with the triangle inequality, shows that	\[
	\prod_{i\leq k}(p_iq'_i)^j\alpha_y^{(j)} \equiv \prod_{i\leq k}(p'_iq_i)^j\alpha_y^{(j)} +O(k \eta^{-O(1)} (P/H)^j) \mod {Q_y}.
	\]
	Now we set $T_{j,y}\in\R $ as the number that satisfies
	\[
	\Bigbrac{\prod_{i\leq k}(p_iq_i')^j-\prod_{i\leq k}(p_i'q_i)^j} \alpha_y^{(j)}\equiv \frac{\Bigbrac{\prod_{i\leq k}(p_iq_i')^j-\prod_{i\leq k}(p_i'q_i)^j} T_{j,y}}{y^j\prod_{i\leq k}(p_iq_i')^j} \mod{Q_y}
	\]
	and 
	\[
	\Bigabs{\frac{\Bigbrac{\prod_{i\leq k}(p_iq_i')^j-\prod_{i\leq k}(p_i'q_i)^j} T_{j,y}}{y^j\prod_{i\leq k}(p_iq_i')^j}}=\Bigabs{\frac{T_{j,y}}{y^j}\bigbrac{1-\prod_{i\leq k}(p_i/p_i')^j(q_i'/q_i)^j}}\ll k\eta^{-O(1)}(P/H)^j.
	\]
It follows from the assumption of Proposition \ref{disjoint-paths} that $p_i,p_i',q_i,q_i'(1\leq i\leq k)$ are distinct primes, and thus, $\prod_{i\leq k}p_iq_i'/(p_i'q_i)\neq 1$;  on the other hand, one can also deduce from \cite[Lemma 4.1]{Wal23-1} and the setting of our $k$ that $ \prod_{i\leq k}p_iq_i'/(p_i'q_i)\ll 1$. Therefore, one has
\[
|T_{j,y}|\ll X^jk\eta^{-O(1)}(P/H)^j \ll X^j/H^{j-\sigma}
\]
since $P\ll H^\omega\ll H^\frac{\sigma}{dk}$ in light of $\omega$ is sufficiently small concerning  $\sigma$. Dividing $\prod_{i\leq k}(p_iq_i')^j-\prod_{i\leq k}(p_i'q_i)^j$ both sides from the above congruence equation, one has
\begin{align}\label{alpha-top}
\alpha_y^{(j)} \equiv\frac{c_y^{(j)}}{q_y^{(j)}}\cdot Q_y+\frac{T_{j,y}}{y^j \prod_{i\leq k}(p_iq_i')^j} \mod{Q_y}
\end{align}
where $1\leq c_y^{(j)}\leq q_y^{(j)}$ are integers and $q_y^{(j)}|(\prod_{i\leq k}(p_iq'_i)^j-\prod_{i\leq k}(p'_iq_i)^j)\ll H^{\sigma/d}$ since $p_i,p_i',q_i,q'_i\ll H^\omega\ll H^{\sigma/2kd}$ from Proposition \ref{connected}. 

Meanwhile, Lemma \ref{top-element} also suggests that there is some integer $1\leq a^{(j)}_y\leq q^{(j)}_y$ such that
\[
\alpha_0^{(j)}\equiv \prod_{i\leq k}(p_iq'_i)^j\alpha_y^{(j)}+O(k\eta^{-O(1)}(P/H)^j)\equiv\frac{a_y^{(j)}}{q_y^{(j)}}\cdot Q_y+\frac{T_{j,y}}{y^j}+O(H^{-j+\sigma}) \mod {Q_y}.
\]
Moreover, on combining (\ref{alpha-top}) with Lemma \ref{top-element} one also has
\begin{multline*}
\beta^{(j)} \equiv \prod_{i\leq k}(q_iq_i')^j\alpha_y^{(j)}+O(k\eta^{-O(1)}(P/H)^j) \equiv \frac{b_y^{(j)}}{q^{(j)}_y}\cdot Q_y+\frac{T_{j,y}}{y^j}\frac{\prod_{i\leq k}(q_iq_i')^j}{\prod_{i\leq k}(p_iq_i')^j}+O(H^{-j+\sigma}) \\
\equiv\frac{b_y^{(j)}}{q_y^{(j)}}\cdot Q_y+\frac{c_{j,y}T_{j,y}}{y^j}+O(H^{-j+\sigma})\mod{Q_y},
\end{multline*}
for some constant $0<c_{j,y}\ll1$, as one has $\prod_{i\leq k}p_i\asymp\prod_{i\leq k}q_i$ by employing \cite[Lemma 4.1]{Wal23-1}. This completes the proof.

\end{proof}

\vspace{2mm}

\noindent\emph{Proof of Theorem \ref{main}}.

\vspace{2mm}
Since for $X^{1/3-\eps}\leq H\leq X$ the claim follows immediately from  \cite[Theorem 1.1]{MRSTT}, let's suppose that $\exp\bigbrac{C(\log X)^{1/2}(\log\log X)^{1/2}}\leq H\leq X^{1/3-\eps}$ in the following. Assume 
\[
\int_X^{2X}\sup_{\vec\alpha\in\T^d} \Bigabs{\sum_{x<n\leq x+H} (\tau_k(n)-\tau^*_k(n)) e\bigbrac{\sum_{1\leq j\leq d}\alpha^{(j)}(n-x)^j}}\,\rd x\geq\eta XH \log^{k-1}X,
\]
it follows from the second conclusion of Proposition \ref{connected} that there is a $(c\eta^{2},H)$-configuration $\mathcal J\subseteq[X,2X]\times \T$ such that when $(y,\vec\beta_y)\in\mathcal J$  the  correlation below holds:
\begin{align}\label{s}
\Bigabs{ \sum_{y<n\leq y+H} (1_S(n)\tau_k(n)-\tau_k^*(n))  e\bigbrac{\sum_{1\leq j\leq d}\beta_y^{(j)}(n-y)^j}   } \gg \eta H\log^{k-1}X.
\end{align}
	 Proposition \ref{disjoint-paths} then gives us a set $\mathcal J^\#\subseteq\mathcal J$ with $|\mathcal J^\#|\gg\frac{|\mathcal J|}{H^\sigma}$ such that for every $(y,\vec\beta_y)\in\mathcal J^\#$, there exists a number $Q_y$ that is the product of $\gg\eta^{O(k_0)}P'\log P'$ primes $p'\in[P',2P']$, and $(x_0,\vec\alpha_0)$ and $(y,\vec\beta_y)$ are connected by a split path modulo $Q_y$. Applying the pigeonhole principle,  there exists $\tilde Q\geq P'^{c\eta^{O(k_0)}P'/\log P'}\geq\exp\bigbrac{c\eta^{O(k_0)}P'}$ such that for $\gg\eta^{O(k_0)}|\mathcal J^\#|\gg H^{-\sigma}|\mathcal J^\#|$ elements $(y,\vec\beta_y)\in\mathcal J^\#$, each of them can be connected with $(x_0,\vec\alpha_0)$ by a split path modulo $\tilde Q$. This implies that there exists $(y_0,\vec\beta_0)\in\mathcal J^\#$ such that
\[
\tilde Q_y=\gcd(Q_{y_0},Q_y)\geq\tilde Q\geq\exp\bigbrac{c\eta^{O(k_0)}P'}
\]
holds for $\gg H^{-\sigma}|\mathcal J^\#|$ of $(y,\vec\beta_y)\in\mathcal J^\#$. Let $T_j=T_{j,y_0}$, the application of Lemma \ref{major-frequence} with $(y_0,\vec\beta_0)$ and $(y,\vec\beta_y)$, respectively, shows that
\[
\alpha_0^{(j)} \equiv \frac{a^{(j)}_y}{q^{(j)}_y}\cdot Q_y+\frac{T_{j,y}}{y^j}+O(H^{-j+\sigma})\equiv \frac{a^{(j)}_{y_0}}{q^{(j)}_{y_0}}\cdot Q_{y_0}+\frac{T_j}{y_0^j}+O(H^{-j+\sigma})\mod {\tilde{Q_y}},
\]
which means that $\frac{a^{(j)}_y}{q^{(j)}_y}\cdot Q_y \equiv\frac{a^{(j)}_{y_0}}{q^{(j)}_{y_0}}\cdot Q_{y_0}\mod {\tilde{Q_y}}$ and $\bigabs{\frac{T_{j,y}}{y^j}-\frac{T_j}{y_0^j}}\ll H^{-j+\sigma}$. Since both $y$ and $y_0$ are in the interval $[X/(2P),2X/P]$ we may set $y/c_y=y_0$ with $1/2<c_y<2$, then 
\[
\bigabs{\frac{T_{j,y}}{y^j}-\frac{c_y^jT_j}{y^j}}\ll H^{-j+\sigma}.
\]
 Therefore, one can deduce from the above inequality and Lemma \ref{major-frequence} that there are $\gg\frac{X}{H^{1+2\sigma}}$ of $(y,\vec\beta_y)\in\mathcal J^\#$ such that for each $y$ there is some constant $0<c_y\ll1$ such that
\[
\beta_y^{(j)}\equiv\frac{b^{(j)}_y}{q_y^{(j)}}\cdot Q_y+\frac{c_{j,y}T_{j,y}}{y^j}+O(H^{-j+\sigma}) \equiv \frac{b^{(j)}_y}{q_y^{(j)}}\cdot Q_y+\frac{c_{j,y}c^j_yT_j}{y^j}+O(H^{-j+\sigma})\mod{\tilde Q_y}.
\]
As there are only  finite number of such $c_{j,y}c^j_y$, applying the pigeonhole principle allows us to assume that there is one such $c>0$ and $\gg\frac{X}{H^{1+2\sigma}}$ elements $(y,\vec\beta_y)\in\mathcal J$ such that
\[
\beta_y^{(j)}\equiv \frac{b^{(j)}_y}{q_y^{(j)}}\cdot Q_y+\frac{cT_j}{y^j}+O(H^{-j+\sigma})\mod{\tilde Q_y}
\]
for some $1\leq b_y^{(j)}\leq q_y^{(j)}$. We then rename $cT_j$ as $T_j$. As it follows from Lemma \ref{major-frequence} that for every $(y,\vec\beta_y)\in\mathcal J^\#$ we have $q_y^{(d)},\dots,q_y^{(1)}\ll H^{\sigma/d}$, the pigeonhole principle allows us to find a $k$-tuple $q^{(d)},\dots,q^{(1)}\ll H^{\sigma/d}$ such that $q_y^{(j)}=q^{(j)}$ for $\gg\frac{X}{H^{1+3\sigma}}$ of such $(y,\vec\beta_y)$. \cite[Proposition 3.13]{MRTTZ} further lowers these popular denominators $q^{(j)}$ to $O(1)$. Let's take $q$ as the product of $q^{(1)},\dots,q^{(d)}$, then we still have $q\ll 1$. Therefore, one may conclude from the above analysis along with inequality (\ref{s})  that there are integers $1\leq b^{(j)}\leq q$ such that for $\gg\eta^{O(1)}\frac{X}{H^{1+3\sigma}}$ of elements $(y,\vec\beta_y)\in[X,2X]\times\T^d$ satisfying $y$ are $H$-separated and
\[
\Bigabs{\sum_{y<n\leq y+H} (1_S(n)\tau_k(n)-\tau_k^*(n)) e\bigbrac{\sum_{1\leq j\leq d}(n-y)^j\beta_y^{(j)}}}
 \gg \eta H\log^{k-1}X,	
\]
where $\beta_y^{(j)}\equiv \frac{b^{(j)}}{q}\cdot Q_y+\frac{T_j}{y^j}+O(H^{-j+\sigma})\mod{\tilde Q_y}$.

By decomposing the interval $[y,y+H]$ into disjoint subintervals of length $\Omega(H^{1-3\sigma})$,  the pigeonhole principle gives us an integer $0\leq z_y-y\leq H$ such that for $\gg\eta^{O(1)}\frac{X}{H^{1+3\sigma}}$ of elements $(y,\vec\beta_y)\in[X,2X]\times\T^d$ the following inequality holds:
\[
\eta H^{1-3\sigma}\log^{k-1}X\ll \Bigabs{\sum_{z_y<n\leq z_y+H^{1-3\sigma}} (1_S(n)\tau_k(n)-\tau_k^*(n)) e\Bigbrac{\sum_{1\leq j\leq d}(n-y)^j\beta_y^{(j)}}};
\]
besides, by noting that $|T_j/z_y^j-T_j/y^j|\ll H^{-j+\sigma}$ we can also rewrite $\beta_y^{(j)}$ as
\[
\beta_y^{(j)}\equiv \frac{b^{(j)}}{q}\cdot Q_y+\frac{T_j}{z_y^j}+O(H^{-j+\sigma})\mod{\tilde Q_y}.
\]

If we fix a degree $1\leq j\leq d$ for a moment and consider the phase function $e\bigbrac{\set{(n-y)^j-(z_y-y)^j}\beta_y^{(j)}}$. By  Taylor expansion and noting that $|n-z_y|\leq H^{1-3\sigma}$ and $|z_y-y|\leq H$, this phase function equals to
\begin{multline*}
	e\Bigbrac{\set{(n-y)^j-(z_y-y)^j}\bigbrac{\frac{b_{j}}{q}+\frac{T_j}{z_y^j}}}e\Bigbrac{O\bigbrac{H^{-j+\sigma}\sum_{1\leq i\leq j}(n-z_y)^i(z_y-y)^{j-i}}}\\
	=e\Bigbrac{\set{(n-y)^j-(z_y-y)^j}\bigbrac{\frac{b_{j}}{q}+\frac{T_j}{z_y^j}}}\bigbrac{1+O(H^{-\sigma})},
\end{multline*}
where $b_j$ is an integer satisfying $b_j\equiv b^{(j)}Q_y\mod q$.
Therefore,
\begin{multline*}
\eta H^{1-3\sigma}\log^{k-1}X\ll\Bigabs{\sum_{z_y<n\leq z_y+H^{1-3\sigma}} (1_S(n)\tau_k(n)-\tau_k^*(n)) \prod_{1\leq j\leq d}e\Bigbrac{\frac{b_j(n-y)^j}{q}+\frac{T_j}{z_y^j}(n-y)^j}}	\\
+H^{-\sigma}\sum_{z_y<n\leq z_y+H^{1-3\sigma}}(\tau_k(n)+\tau_k^*(n))\\
\ll\sum_{a\mod {q}} \biggabs{\twosum{z_y<n\leq z_y+H^{1-3\sigma}}{  n\equiv a\mod {q}} (1_S(n)\tau_k(n)-\tau_k^*(n)) e\Bigbrac{\sum_{1\leq j\leq d}\frac{T_j}{z_y^j}(n-y)^j}}	\\
+H^{-\sigma}\sum_{z_y<n\leq z_y+H^{1-3\sigma}}(\tau_k(n)+\tau_k^*(n))
\end{multline*}
for $\gg\eta^{O(1)}\frac{X}{H^{1+3\sigma}}$ of $H$-separated $z_y\in[X,2X]$. If we claim that there are at most $O(\frac{X}{H^{1+3.1\sigma}})$ of $H$-separated $z_y\in[X,2X]$ such that
\begin{align}\label{222}
\sum_{z_y<n\leq z_y+H^{1-3\sigma}}(\tau_k(n)+\tau_k^*(n))\gg H^{1-3\sigma}\log^{k-1}X,
\end{align}
recalling that $q\ll1$, it follows from the pigeonhole principle that there are at least $\gg\eta^{O(1)}\frac{X}{H^{1+3\sigma}}$ of $H$-separated $z_y\in[X,2X]$ and an integer $1\leq a\leq q$ such that
\[
\Bigabs{\sum_{z_y<n\leq z_y+H^{1-3\sigma} \atop n\equiv a\mod{q}} (1_S(n)\tau_k(n)-\tau_k^*(n)) e\Bigbrac{\sum_{1\leq j\leq d}\frac{T_j}{z_y^j}(n-y)^j}} \gg \eta H^{1-3\sigma}\log^{k-1}X.
\]
In light of the claim (\ref{222}) above, along with the fact that $|z_y-y|\ll H$ and $|z_y-n|\ll H^{1-3\sigma}$ whenever $n\in[z_y,z_y+H^{1-3\sigma}]$, we may further assume that there are at least $\gg\eta^{O(1)}\frac{X}{H^{1+3\sigma}}$ of $H$-separated $z_y\in[X,2X]$ and an integer $1\leq a\leq q$ such that
\[
\Bigabs{\sum_{z_y<n\leq z_y+H^{1-3\sigma} \atop n\equiv a\mod{q}} (1_S(n)\tau_k(n)-\tau_k^*(n)) e\Bigbrac{\sum_{1\leq j\leq d}\frac{T_j}{z_y^j}(n-z_y)^j}} \gg \eta H^{1-3\sigma}\log^{k-1}X.
\]

By representing the $d$-th polynomial $\frac{T_d}{z_y^d}(n-z_y)^d$ in the form 
\begin{multline*}
\frac{(-1)^{d-1}\bigbrac{(-1)^{d-1}dT_d}}{dz_y^d}(n-z_y)^d+ (1-1)\sum_{1\leq i\leq d-1} \frac{(-1)^{i-1}\bigbrac{(-1)^{d-1}dT_d}}{iz_y^i}(n-z_y)^i	\\
=(-1)^{d-1}dT_d(\log n-\log {z_y}) - \sum_{1\leq i\leq d-1} \frac{(-1)^{i-1}\bigbrac{(-1)^{d-1}dT_d}}{iz_y^i}(n-z_y)^i +O(H^{-\sigma}),
\end{multline*}
the second inequality follows from Taylor expansion and the assumption $|T_d|\ll X^{d}/H^{d-\sigma}$ when $|n-z_y|\leq H^{1-3\sigma}$.  Inspired by this expansion, one can deduce that
\begin{multline*}
\sum_{1\leq j\leq d}\frac{T_j}{z_y^j}(n-z_y)^j=\sum_{1\leq j\leq d}\sum_{1\leq i\leq j}\frac{(-1)^{i-1}(j(-1)^{j-1}T_j-(j+1)(-1)^jT_{j+1})}{i z_y^i}(n-z_y)^i\\
=\sum_{1\leq j\leq d}	\bigbrac{j(-1)^{j-1}T_j-(j+1)(-1)^jT_{j+1}}(\log n-\log {z_y})+O(H^{-\sigma}),
\end{multline*}
with the convenience of setting $T_{d+1}=0$ and the second equation follows from  Taylor expansion and the assumption $|T_j+T_{j+1}|\ll X^{j+1}/H^{j+1-\sigma}$. Therefore, considering the claim (\ref{222}), we may assume there is some $T^*$ with $|T^*|\leq X^{d+1}$ and  $\gg\eta^{O(1)}\frac{X}{H^{1+3\sigma}}$ of $H$-separated $y\in[X,2X]$ and an integer $1\leq a\leq q$ such that
\[
\Bigabs{\sum_{z_y<n\leq z_y+H^{1-3\sigma}  \atop n\equiv a\mod{q}} (1_S(n)\tau_k(n)-\tau_k^*(n)) n^{iT^*}}\gg \eta H^{1-3\sigma}\log^{k-1}X.
\]

Taking $h=H^{1-3\sigma}$ in the following, we now fix a value $z_y\in(X,2X]$ such that the above inequality holds. Suppose that $I\subseteq[z_y-h^{1-\sigma},z_y+h+h^{1-\sigma}]$ is an interval satisfying $|I|=h$ and $|I\bigtriangleup(z_y,z_y+h]|\ll \eta^2h$. It follows from the triangle inequality and Lemma \ref{discretization} (3) with $h_2=\eta^2h$, $z_{1,x}^{(2)}=z_y$ and $z_{2,x}^{(2)}=z_y+h$ that
\begin{multline*}
	\Bigabs{\twosum{n\in I}{n\equiv a\mod{q}} (1_S(n)\tau_k(n)-\tau_k^*(n)) n^{iT^*} }
	\geq \bigabs{\sum_{z_y<n\leq z_y+h \atop n\equiv a\mod{q}}(1_S(n)\tau_k(n)-\tau_k^*(n)) n^{iT^*}}\\
	 - O\Bigbrac{\sum_{|n-z_y|\leq \eta^{2}h}\tau_k(n)}-O\Bigbrac{\sum_{|n-z_y-h|\leq \eta^{2}h}\tau_k(n)}\gg\eta h\log^{k-1}X.
\end{multline*}

 Due to for each $z_y$, there would be at least $\gg \eta^2h$ such intervals $I$ produced, we can conclude that there are $\gg\eta^{O(1)}X/h^{6\sigma}$ integers $x\in(X,2X]$ such that
 \[
 \Bigabs{\sum_{x<n\leq x+h \atop n\equiv a\mod{q}}(1_S(n)\tau_k(n)-\tau_k^*(n)) n^{iT^*}}\gg \eta h\log^{k-1}X,
 \] 
 and this contradicts Proposition \ref{major-arc-approx}. 

Now, if we assume that the claim (\ref{222}) fails and translate the interval $[z_y,z_y+H^{1-3\sigma}]$ by any shift of size at most $O(H^{1-3.1\sigma})$ as done above, then we can conclude that there are $\gg\frac{X}{H^{\sigma}}$ integers $x\in[X,2X]$ such that
\[
\sum_{x<n\leq x+H^{1-3\sigma}}(\tau_k(n)+\tau_k^*(n))\gg H^{1-3\sigma}\log^{k-1}X.
\]
However, this contradicts with Lemma \ref{shiu-bound} and Propositions \ref{long-short-divisor}-\ref{long-short-approx} by noting that, supposing $h_1=X^{1-\frac{1}{100k}}$, for all but at most  $O(\frac{X}{H^{\sigma}})$ integers $x\in[X,2X]$ we have
\[
(H^{1-3\sigma})^{-1}\sum_{x<n\leq x+H^{1-3\sigma}}(\tau_k(n)+\tau_k^*(n))\ll h_1^{-1}\sum_{x<n\leq x+h_1}\tau_k(n)\ll\log^{k-1}X.
\]
This completes the proof of Theorem \ref{main}.
\qed


\bibliographystyle{plain} 

\renewcommand{\bibname}{} 

\bibliography{averaged_fourier.bib}

\end{document}